\documentclass[11pt]{elsarticle}

\usepackage{amsmath}
\usepackage{amssymb}
\usepackage{amsthm}

\newtheorem{theorem}{Theorem}[section]
\newtheorem{lemma}[theorem]{Lemma}
\newtheorem{proposition}[theorem]{Proposition}
\newtheorem{corollary}[theorem]{Corollary}
\newtheorem{question}[theorem]{Question}
\theoremstyle{definition}
\newtheorem{example}[theorem]{Example}
\newtheorem{remark}[theorem]{Remark}
\theoremstyle{plain}
\newtheorem{mainthm}{Theorem}

\newtheorem{maincor}[mainthm]{Corollary}

\newcommand{\GF}{\mathrm{GF}}
\newcommand{\Gal}{\mathrm{Gal}}
\newcommand{\Aut}{\mathrm{Aut}}
\newcommand{\End}{\mathrm{End}}
\newcommand{\Syl}{\mathrm{Syl}}
\newcommand{\GL}{\mathrm{GL}}
\newcommand{\Z}{\mathbb{Z}}
\newcommand{\nsg}{\trianglelefteq}

\begin{document}

\begin{frontmatter}

\title{Abelian quotients and the largest orbit sizes of linear groups of odd order}

\author{Thomas Michael Keller}
\ead{keller@txstate.edu}

\address{Department of Mathematics, Texas State University, 601 University Drive, San Marcos, TX 78666, United States of America}

\begin{abstract}
Let $G$ be a finite nonabelian group of odd order, let $p$ be the smallest prime divisor of $|G|$, and let $V$ be a finite faithful completely reducible $G$-module, possibly of mixed characteristic. Suppose that $M$ is the largest orbit size in the action of $G$ on $V$. It is known that $|G:G'|\le M$, and that equality holds only for abelian groups if $|G|$ is odd. We prove that $|G:G'|\le M/p$, unless $p=3$ and $G$ is the direct product of an abelian group and a group isomorphic to $\Gamma(2^3)$ or to $\Gamma(2^3)\times C_3$ acting on $V$ in a specific way, in which case $|G:G'|=3M/7$. In particular, $|G:G'|\le 3M/7$ for every nonabelian group of odd order. If $|V|$ is odd, then $|G:G'|=M/p$ holds if and only if $|G'|=p$; more precisely, nilpotent groups then have regular orbits, whereas non-nilpotent groups satisfy $|G:G'|\le 3M/13$, and even $|G:G'|<M/(2p)$ if $p\ge5$. The bound $3M/13$ is best possible.
\end{abstract}

\begin{keyword}
Finite linear groups \sep maximal orbit size \sep groups of odd order \sep regular orbits \sep semilinear groups \sep commutator subgroups
\MSC[2020] 20D10
\end{keyword}

\end{frontmatter}

\section{Introduction}\label{sec:intro}

One important aspect of finite linear groups is the study of their orbits, and maximal orbit sizes play an important role in understanding the structure of a finite group action. Let $G$ be a finite group, let $V$ be a finite faithful completely reducible $G$-module, possibly of mixed characteristic, and let $M$ be the largest orbit size in the action of $G$ on $V$. Yang and the author proved that
\[
|G:G'|\le M,
\]
first for solvable groups \cite{KY16} and then for arbitrary finite groups \cite{KY20}. Moreover, if $G$ is nonabelian and $|G:G'|=M$, then $G$ is nilpotent and has at least two orbits of size $M$ on $V$ (see \cite{QY21} and \cite[Theorem 2.1]{CK24b}). This extreme case was studied in \cite{CK24a,JK20,KP22}, and it was completely classified by \c{C}\i{}narc\i{} and the author in \cite{CK24b}: if $G$ is nonabelian and $|G:G'|=M$, then $G$ is the direct product of an abelian group and of copies of $D_8$, $D_8\circ C_4$ and $D_8\circ Q_8$, and $V$ is the direct sum of a faithful module for the abelian factor and of corresponding copies of $V(2,3)$, $V(2,5)$ and $V(4,3)$ \cite[Theorems A and B]{CK24b}. The irreducible actions occurring here are exactly the ones in Passman's classification of the linear actions without regular orbits of groups of type $E(p,m)$ \cite[Lemma 1.5(i)]{Pa69}.

All groups in this classification have nonabelian Sylow $2$-subgroups. Consequently, if $|G|$ is odd, then $|G:G'|=M$ holds if and only if $G$ is abelian (Corollary~\ref{collapse}). This raises the question what the best possible bound for $|G:G'|$ in terms of $M$ is if $G$ is a nonabelian group of odd order, and which group actions attain it. It turns out that the answer is governed by the smallest prime divisor $p$ of $|G|$. Indeed, if $G$ is a group of odd order with $|G'|=p$, then $G$ is nilpotent and has a regular orbit on every finite faithful completely reducible $G$-module (Proposition~\ref{Gprime}), so that $|G:G'|=M/p$. Our main result shows that, apart from two small exceptional configurations for $p=3$, the bound $M/p$ holds in general.

To describe the exceptions, let $D_1=\Gamma(2^3)$, acting naturally on $U_1=\GF(8)$. Let $A_0$ be the subgroup of order $21$ of $\GF(64)^\times$, let $\sigma_0\in\Gal(\GF(64))$ be the automorphism $x\mapsto x^4$, and let $D_2=A_0\langle\sigma_0\rangle\le\Gamma(2^6)$, acting naturally on $U_2=\GF(64)$. Then $D_2\cong\Gamma(2^3)\times C_3$, and $U_2$ can be viewed as $U_1\otimes_{\GF(2)}\GF(4)$, where $\Gamma(2^3)$ acts on the first factor and $C_3=\GF(4)^\times$ acts by multiplication on the second factor (Lemma~\ref{exc}). We say that a pair $(G,V)$ is \emph{exceptional} if $G=D\times A$ and $V=U\oplus W$, where $(D,U)$ is isomorphic to $(D_1,U_1)$ or to $(D_2,U_2)$ as a linear group, $A$ is abelian, $D$ acts trivially on $W$, and $A$ acts trivially on $U$ and faithfully on $W$. Note that then $3$ divides $|G|$ and $|V|$ is even.

\begin{mainthm}\label{thmA}
Let $G$ be a finite nonabelian group of odd order, let $p$ be the smallest prime divisor of $|G|$, and let $V$ be a finite faithful completely reducible $G$-module, possibly of mixed characteristic. Suppose that $M$ is the largest orbit size in the action of $G$ on $V$. Then
\[
|G:G'|\le\frac{M}{p},
\]
unless $(G,V)$ is exceptional, in which case $p=3$ and $|G:G'|=\frac37M$. Moreover, if $|G'|=p$, then $|G:G'|=M/p$.
\end{mainthm}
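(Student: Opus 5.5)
We argue by induction on $|G|+|V|$, with $(G,V)$ a counterexample minimal in this sense. The last assertion comes first. If $|G'|=p$, then Proposition~\ref{Gprime} gives a regular orbit, so $M=|G|$ and $|G:G'|=M/p$. Hence we may assume $|G'|>p$ whenever convenient.

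\textbf{Reducible modules.} Suppose $V=V_1\oplus V_2$ with both summands nonzero. Put $C_i=C_G(V_i)$, so $C_1\cap C_2=1$ and $G/C_i$ acts faithfully on $V_i$. Choose $v_i\in V_i$ lying in orbits of size $M_i$ under $G/C_i$; the orbit of $v_1+v_2$ then has size $|G:C_G(v_1)\cap C_G(v_2)|$. The standard counting from \cite{KY16,KY20} gives
\[
M\ge M_1\cdot|C_1:C_1\cap C_G(v_2)|.
\]
Combined with the general inequality $|G:G'|\le M$ applied to the abelian-quotient parts, this yields $|G:G'|\le |G/C_1:(G/C_1)'|\cdot|C_1:C_1'\cap\ldots|$ type estimates. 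If $G/C_1$ is nonabelian, induction gives a factor $1/p$, or $3/7$ in the exceptional case. Otherwise $G/C_1$ and $G/C_2$ are both abelian, so $G$ embeds in an abelian direct product, which contradicts $G$ being nonabelian. The exceptional factors are isolated exactly when the nonabelian part lives on a single summand as $D\times A$. The point of this step is to show that the $3/7$ loss cannot stack in a way that produces a new example, and that the exceptional summand must split off as a direct factor, giving $G=D\times A$. In fact $3/7>1/3$, so the deficiency $M/|G:G'|\ge 7/3>p$ with $p=3$ still only gives the stated dichotomy.

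\textbf{Irreducible modules.} Now $V$ is irreducible and homogeneous of odd-order group type. Take $N\trianglelefteq G$ maximal subject to $V_N$ not being homogeneous. Either $V$ is induced from a proper subgroup $H$, or $G$ is quasi-primitive. In the induced (wreath-like) case we use the bound for $H$ on the summand together with orbit counting for the permuted summands, following the argument of \cite{KY16}. The permutation group $G/N$ has odd order and therefore has a regular orbit on the power set (Seress/Gluck). This lets us gain a factor of at least $p$ from the nonabelian structure whenever the base group or $G/N$ is nonabelian.

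In the quasi-primitive case, every normal abelian subgroup is cyclic. Since the order is odd, Fitting-subgroup analysis gives $G\le\Gamma(q^n)$ or an extraspecial-type normal subgroup $E$ of odd exponent. The extraspecial case: $G/EZ$ acts on $E/Z(E)$, regular orbits exist (Espuelas/Yang for odd order), and one checks $|G:G'|\le |G|/|E'|\le M/p$ directly. The semilinear case $G\le\Gamma(q^n)$ is the heart of the matter. Write $G=A\langle\sigma\rangle$ with $A$ cyclic. Then $G'=[A,\sigma]$, so $|G:G'|=|C_A(\sigma)|\cdot|G:A|$, and we must find a vector whose stabilizer is small, by counting the elements of $\GF(q^n)$ lying in the proper subfields fixed by the field automorphisms involved.

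\textbf{Main obstacle.} The hardest step is the subfield counting in the semilinear case, which I would attack first by brute estimate: the number of $x\in\GF(q^n)$ having a nontrivial stabilizer is at most $\sum_{r\mid n}|A|\,q^{n/r}$. This beats the required bound except for very small $q^n$, and those small cases are checked by hand. The survivors are precisely $\Gamma(2^3)$, with $M=7$ and $|G:G'|=3$, and its extension $D_2$ on $\GF(64)$ from Lemma~\ref{exc}. These give the ratio $3/7$, and any further enlargement of the group or the field restores $M/p$.
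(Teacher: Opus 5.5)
\textbf{There is a genuine gap in the irreducible imprimitive case, which is exactly where the theorem is delicate.}

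Gluck's power-set argument only helps if you can mark the components in $\Delta$ without enlarging the stabilizer in the kernel $K$. In odd characteristic the paper does this by replacing $x_i$ with $-x_i$ (Lemma~\ref{minus}, Proposition~\ref{imprim}). In characteristic $2$ there is no such device, and the inequality $M(G,V)\ge|G:K|\,M(K,V)$ is simply false there: for $C_3\wr C_3$ on $V(3,4)$ one has $M(K,V)=27$ and $|G:K|=3$, but $M(G,V)=27$.

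In this example the base group and the top group are both abelian. So your claimed gain of a factor $p$ ``whenever the base group or $G/N$ is nonabelian'' gives nothing, although $\rho(G,V)=3=p$ holds with no room to spare. In such configurations the factor $p$ has to come from comparing $|G'|$ with a point stabilizer, not from multiplying orbit sizes. The quasiprimitive extraspecial case has the same problem. You need $M=|G|$ for $|G:G'|\le|G|/|E'|\le M/p$, and you get it from Espuelas' lemma, which requires odd characteristic. In characteristic $2$ you have no regular-orbit theorem to invoke.

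\textbf{The missing idea is the reduction through the Fitting subgroup, $|G:G'|\le|F:F'|$ with $F=F(G)$ (Theorem~\ref{KYF}).}
\begin{itemize}
\item If $F$ is nonabelian, Proposition~\ref{nilpotent} applies to $F$. Its proof uses Corollary~\ref{collapse} to get $M(Q,X)>|Q:Q'|$ for a nonabelian Sylow $q$-subgroup $Q$. Since both numbers are powers of $q$, the ratio is at least $q\ge p$. This also covers nilpotent groups without regular orbits, such as $C_3\wr C_3$, which your proposal never treats.
\item If $F$ is abelian and some $v$ with $C_F(v)=1$ has $FC_G(v)\ne G$, then $|v^G|\ge p|F|\ge p|G:G'|$ at once.
\item Otherwise every regular $F$-orbit is $G$-invariant, which is very rigid. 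Either $V_F$ is homogeneous, giving the semilinear case, or Proposition~\ref{invariant} applies. There $V$ has characteristic $2$ and each $F_j=E_j^\times$ is regular on $V_j^\#$. One then compares $|G'|\ge|B_1|^{t-1}$, or $|G'|\ge(\gamma_p o)^t$ via Lemma~\ref{galois} when field automorphisms occur, with Dixon's bound $|G:K|\le(\sqrt3)^{t-1}$.
\end{itemize}

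\textbf{Two smaller points.}
\begin{itemize}
\item In the semilinear case your count $\sum|A|q^{n/\ell}$ exceeds $q^n$ as soon as $A$ is large (e.g.\ $A=E^\times$), so it does not produce the required vector. What works is that every stabilizer has order dividing $k=|G:A|$, so $\rho\ge e/k$ with $e=|G'|>2k$ for non-nilpotent $G$ (Lemma~\ref{arith}). When all stabilizers have order exactly $k$, this is combined with the partition count $r^k-1\le de(r-1)$.
\item In the reducible step, the splitting-off of the exceptional summand is asserted rather than proved. You need $\mu\in\{1\}\cup[7/3,\infty)$ and $\iota\in\{1\}\cup[p,\infty)$. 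Then you need the arguments of Theorem~\ref{cr}, which use that the point stabilizers generate $D_i$ and that the $C$-regular vectors span $W$.
\end{itemize}
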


Since $p\ge3$, we obtain the following best possible bound for arbitrary groups of odd order.

\begin{maincor}\label{corB}
Let $G$ be a finite nonabelian group of odd order and let $V$ be a finite faithful completely reducible $G$-module, possibly of mixed characteristic. If $M$ is the largest orbit size in the action of $G$ on $V$, then
\[
|G:G'|\le\frac37M,
\]
and equality holds if and only if $(G,V)$ is exceptional. Otherwise, $|G:G'|\le M/3$.
\end{maincor}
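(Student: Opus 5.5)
We deduce Corollary~\ref{corB} directly from Theorem~\ref{thmA}. The plan is to split according to whether $(G,V)$ is exceptional, and in the non-exceptional case according to the smallest prime divisor $p$ of $|G|$.

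First suppose $(G,V)$ is not exceptional. Theorem~\ref{thmA} gives $|G:G'|\le M/p$. Since $|G|$ is odd and $G\ne1$, we have $p\ge3$, so $|G:G'|\le M/3<\frac37M$. This proves both the inequality and the final assertion ``otherwise $|G:G'|\le M/3$'' in this case. Because $M>0$, the inequality is strict, so equality cannot hold for non-exceptional pairs.

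Now suppose $(G,V)$ is exceptional. Theorem~\ref{thmA} states that then $|G:G'|=\frac37M$, so equality holds. It remains to check that $G$ satisfies the hypotheses, namely that it is nonabelian of odd order. From the definition, $G=D\times A$ with $D\cong\Gamma(2^3)$ or $\Gamma(2^3)\times C_3$. The group $\Gamma(2^3)=C_7\rtimes C_3$ has order $21$ and is nonabelian, and $\Gamma(2^3)\times C_3$ has order $63$. The abelian factor $A$ has odd order because $|G|$ is odd by hypothesis. Thus exceptional pairs genuinely occur among the groups considered, and for them equality holds. Combining the two cases gives $|G:G'|\le\frac37M$ in general, with equality exactly in the exceptional case.

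I expect no real obstacle. The only point needing care is the strictness $M/3<3M/7$, which uses $M\ge1$. One should also note that ``otherwise'' in the statement refers to the non-exceptional case, which is precisely where Theorem~\ref{thmA} yields the bound $M/p\le M/3$.
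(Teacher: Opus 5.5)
Your proposal is correct and matches the paper's proof: in the non-exceptional case, Theorem~\ref{thmA} gives $|G:G'|\le M/p\le M/3<\frac37M$, and in the exceptional case equality holds. The paper cites Theorem~\ref{cr} for the exceptional case, but Theorem~\ref{thmA} records the same fact; your extra check that exceptional $G$ is nonabelian of odd order is unnecessary, since both are hypotheses of the corollary.
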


In particular, the quotient $M/|G:G'|$ never lies strictly between $7/3$ and $3$ for a nonabelian group of odd order. The exceptional configurations are due to the arithmetic of small semilinear groups in characteristic $2$. For an odd prime $p$, the smallest prime divisor of the order of $\Gamma(2^p)\cong C_{2^p-1}\rtimes C_p$ is $p$, and acting on $V(p,2)$ this group satisfies $|G:G'|=pM/(2^p-1)$ (Example~\ref{ex:gamma}). This exceeds $M/p$ only for $p=3$, where $\Gamma(2^3)=D_1$.

If $|V|$ is odd, then $(G,V)$ is never exceptional, and much more can be said.

\begin{mainthm}\label{thmC}
Let $G$ be a finite nonabelian group of odd order, let $p$ be the smallest prime divisor of $|G|$, and let $V$ be a finite faithful completely reducible $G$-module, possibly of mixed characteristic, such that $|V|$ is odd. Suppose that $M$ is the largest orbit size in the action of $G$ on $V$.
\begin{itemize}
\item[(a)] If $G$ is nilpotent, then $G$ has a regular orbit on $V$. In particular, $M=|G|$ and $|G:G'|=M/|G'|$.
\item[(b)] If $G$ is not nilpotent, then there exists $v\in V$ such that $|C_G(v)|\le\frac{3}{13}|G'|$, and if $p\ge5$, then there exists $v\in V$ such that $|C_G(v)|<\frac{1}{2p}|G'|$. In particular,
\[
|G:G'|\le\frac{3}{13}M,\qquad\text{and}\qquad |G:G'|<\frac{M}{2p}\ \text{ if } p\ge5.
\]
\item[(c)] We have $|G:G'|\le M/p$, and equality holds if and only if $|G'|=p$.
\end{itemize}
\end{mainthm}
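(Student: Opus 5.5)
Parts (a) and (c) will be derived from (b) together with the results already mentioned in the introduction.

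\textbf{Part (a).} Since $|G|$ is odd and $|V|$ is odd, $G$ is nilpotent and $V$ is faithful and completely reducible, so we reduce to Sylow subgroups. Write $G=\prod_q P_q$ and decompose $V$ by characteristic. A standard result (Passman/Berger type) says a $q$-group acting faithfully and coprimely on a completely reducible module, with $q$ odd and no $2$-part, has a regular orbit. The exceptions to regular orbits for nilpotent groups arise only from the $E(p,m)$ configurations with $D_8,Q_8$ factors or from characteristic $2$ with Mersenne/Fermat primes, and both are excluded here. Since $G$ has odd order, the first family is excluded outright. The second needs $\mathrm{char}\ 2$, which $|V|$ odd excludes.

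I would argue concretely by induction on $|G|+|V|$. First split $V=V_1\oplus V_2$ whenever it is reducible, and use the standard trick of taking a regular-orbit vector for $G/C_G(V_1)$ on $V_1$ and then for the stabilizer on $V_2$. This reduces to $V$ irreducible, hence homogeneous under $G$ with $G$ primitive or imprimitive. In the imprimitive case, a wreath-type argument works because the permutation groups are of odd order, and odd-order permutation groups have a regular orbit on the set of ordered partitions into $2$ parts (Gluck / Seress). In the primitive case, every normal abelian subgroup is cyclic, so $G\le\Gamma(q^n)$ or $G$ has an extraspecial-type normal subgroup of odd exponent. The semilinear case is a direct count. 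The odd extraspecial case uses Passman's lemma, which gives a regular orbit when $|V|$ is odd.

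\textbf{Part (b).} This is the main obstacle. I would again induct, reducing to $G$ acting primitively on an irreducible $V$ via the standard orbit-stabilizer gluing; the inequalities $|C_G(v)|\le c|G'|$ behave well under direct sums because $|G'|\ge|G_1'|$ for quotients. In the primitive non-nilpotent case with $F(G)$ cyclic, $G\le\Gamma(r^n)$, so $G=A\langle\sigma\rangle$ with $A$ cyclic and $|G'|=|[A,\sigma]|$. I would pick $v$ outside the fixed fields of the subfields to get $|C_G(v)|=1$ in most cases, and count the elements fixed by $\sigma^k$ times a scalar. The worst case should be $r=3$, $n=3$: here $\Gamma(3^3)$ contains a subgroup of order $13\cdot3$ with $|G'|=13$ and $M=13$, so $|C_G(v)|=3=\frac3{13}|G'|$, which realizes the constant $3/13$. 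For $p\ge5$ the corresponding counts give $|C_G(v)|\le p<|G'|/(2p)$ since $|G'|\ge$ a prime $\equiv1\pmod p$ larger than $2p^2$ in the relevant cases. Small cases would be checked individually. When $F(G)$ is not cyclic, we have $E\nsg G$ extraspecial of odd order, and the regular orbit of $F$ from (a) combined with the bound $|G/F|\le|E/Z|$ gives ample room. Finally, $|G:G'|=|G|/|G'|\le M\,|C_G(v)|/|G'|$ yields the displayed bounds.

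\textbf{Part (c).} If $G$ is nilpotent, then $M=|G|$ by (a), so $|G:G'|=M/|G'|\le M/p$. Equality holds iff $|G'|=p$, since $|G'|$ is a power-product of primes $\ge p$ and $G'\neq1$. If $G$ is not nilpotent, then (b) gives $|G:G'|\le\frac3{13}M<M/3$ when $p=3$, and $<M/(2p)$ when $p\ge5$, so the inequality is strict. Moreover, $|G'|=p$ forces nilpotency by Proposition~\ref{Gprime}, which finishes the equivalence.
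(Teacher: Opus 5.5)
Part (a) is fine if you simply invoke a regular-orbit theorem of Yang's type, as the paper does (Lemma~\ref{nilreg}). Your derivation of (c) from (a), (b) and the nilpotency of groups with $|G'|=p$ is the paper's. The gap is in (b), and your own sketch of (a) has the same hole in its imprimitive step.

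\textbf{The imprimitive case is not handled.} Your ``standard orbit-stabilizer gluing'' covers only direct sums of $G$-submodules. Even there, $|G'|\ge|\bar G'|$ is not enough. You also need Theorem~\ref{KY} for $C=C_G(V_1)$ on the other summands, to get $w$ with $|C_C(w)|\le|C'|\le|G'\cap C|$. The gluing says nothing about an irreducible $V=W_1\oplus\cdots\oplus W_n$ whose blocks $G$ permutes transitively. A vector with small stabilizer in the kernel $K$ can still be fixed by elements of $G\setminus K$ that permute the blocks, and this is where oddness of $|V|$ is essential.

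The paper's device is Lemma~\ref{minus}: in odd characteristic, $x$ and $-x$ never lie in the same orbit of a group of odd order. So there is an $N_G(W_1)$-invariant splitting $W_1^\#=S\cup(-S)$, which can be transported to every block. Gluck's Theorem~\ref{Gluck} supplies a set $\Delta$ of blocks with trivial setwise stabilizer in $G/K$. Take $x$ with $|x^K|=M(K,V)$ and flip signs of its components so that exactly the components in blocks of $\Delta$ lie in the copies of $S$. Since $C_K(-x_i)=C_K(x_i)$, the resulting $v$ has $C_G(v)=C_K(x)$, i.e.\ $M(G,V)\ge|G:K|\,M(K,V)$ (Proposition~\ref{imprim}). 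One then inducts on $K$, or uses a regular orbit if $K$ is nilpotent. Without the sign trick this inequality is false: in Example~\ref{ex:wr} (characteristic $2$), $M(G,V)=27$ but $|G:K|\,M(K,V)=81$. You mention Gluck in (a), but the second ``colour'' in each block with the same $K$-stabilizer is missing.

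\textbf{The semilinear case is argued incorrectly.} Taking $v$ outside proper subfields does not give $C_G(v)=1$. A map $x\mapsto cx^\tau$ with $c\ne1$ fixes a line $y_0E_0$ with $y_0$ arbitrary, and in Example~\ref{ex:13} every nonzero vector has stabilizer of order $3$. Your bound $|C_G(v)|\le p<|G'|/(2p)$ presupposes $|G:F(G)|=p$ and $|G'|>2p^2$. Neither holds in general; in Example~\ref{ex:SG}, $|G'|=2p+1$.

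What is needed is the dichotomy of Proposition~\ref{semi}. Stabilizers of nonzero vectors meet $A=F(G)$ trivially, so their orders divide $k=|G:A|$.
\begin{itemize}
\item If some stabilizer has order a proper divisor of $k$, it is at most $k/p$. Lemma~\ref{arith} (non-nilpotency forces $2k<|G'|$) then gives $|C_G(v)|<|G'|/(2p)$.
\item If all stabilizers have order $k$, they are complements of $A$ whose fixed-point sets partition $E^\#$, so $r^k-1\le|A|(r-1)$. With $|C_A(g)|\le(r-1)/2$ in odd characteristic, this gives $|G'|/k\ge2(r^k-1)/(k(r-1)^2)\ge13/3$.
\end{itemize}

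\textbf{The non-cyclic Fitting case is unsupported.} For non-cyclic $F(G)$, ``ample room'' is not an argument. A regular orbit of $F(G)$ only yields $|C_G(v)|\le|G:F(G)|$, and you never compare this with $|G'|$. The paper uses Espuelas' result (Lemma~\ref{Espuelas}) that $G$ itself then has a regular orbit. It combines this with $|G'|\ge2p+1$ for non-nilpotent $p$-rough $G$ (Lemma~\ref{comm}(b)), an input you need whenever a regular orbit exists.
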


The constant $3/13$ in Theorem~\ref{thmC}(b) is best possible: the Frobenius group $C_{13}\rtimes C_3\le\Gamma(3^3)$ acting on $V(3,3)$ has no regular orbit and satisfies $|G:G'|=3M/13$ (Example~\ref{ex:13}). For $p\ge5$, the bound $M/(2p)$ is close to best possible: if $2p+1$ is a prime, then the Frobenius group $C_{2p+1}\rtimes C_p\le\Gamma(3^p)$ acting on $V(p,3)$ has a regular orbit and satisfies $|G:G'|=M/(2p+1)$ (Example~\ref{ex:SG}).

The groups attaining the bound $M/p$ for all modules can be described as follows.

\begin{maincor}\label{corD}
Let $G$ be a finite group of odd order and let $p$ be the smallest prime divisor of $|G|$. Then $|G'|=p$ if and only if $G=P\times A$, where $P$ is a $p$-group with $|P'|=p$ and $A$ is an abelian group of order prime to $p$. In this case $P'\le Z(G)$, the group $P/Z(P)$ is elementary abelian of order $p^{2m}$ for some $m\ge1$, $G$ has a regular orbit on every finite faithful completely reducible $G$-module $V$, possibly of mixed characteristic, and
\[
|G:G'|=\frac{M}{p}
\]
for every such $V$, where $M$ denotes the largest orbit size of $G$ on $V$. If $G$ has a faithful irreducible module over a finite field, then $Z(G)$, and hence also $A$, is cyclic.
\end{maincor}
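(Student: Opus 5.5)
We prove the two directions of the characterization first, and then read off the remaining assertions from Proposition~\ref{Gprime} and Theorem~\ref{thmA}.

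\emph{The case $|G'|=p$.} Since $G'\nsg G$ has order $p$, the conjugation action gives a homomorphism $G\to\Aut(G')\cong C_{p-1}$. Every prime divisor of $|G|$ is at least $p$, so this map is trivial and $G'\le Z(G)$. Hence $G$ is nilpotent of class $2$, and we write $G=P\times Q$ with $P\in\Syl_p(G)$ and $Q$ its Hall $p'$-subgroup. Then $G'=P'\times Q'$, and $|G'|=p$ forces $Q'=1$ and $|P'|=p$. So $A:=Q$ is abelian of order prime to $p$, and $G'=P'\le Z(G)$.

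\emph{The converse.} If $G=P\times A$ with $|P'|=p$ and $A$ abelian, then $G'=P'\times A'=P'$, which has order $p$.

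\emph{The structure of $P/Z(P)$.} For $x,y\in P$ the commutator $[x,y]$ lies in $P'\le Z(P)$. Class $2$ gives $[x,y]^p=[x^p,y]$. Since $P'$ has order $p$, we get $[x^p,y]=1$ for all $y$, so $x^p\in Z(P)$. Thus $P/Z(P)$ is elementary abelian. The commutator map induces a nondegenerate alternating bilinear form $P/Z(P)\times P/Z(P)\to P'\cong\GF(p)$, because its radical is exactly $Z(P)/Z(P)$. Hence $\dim P/Z(P)$ is even, say $2m$. We have $m\ge1$ because $P$ is nonabelian.

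\emph{Orbits.} Proposition~\ref{Gprime} gives that $G$ has a regular orbit on every finite faithful completely reducible module, possibly of mixed characteristic. Therefore $M=|G|$ and $|G:G'|=|G|/p=M/p$; equivalently, this is the final assertion of Theorem~\ref{thmA}.

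\emph{Cyclicity of $Z(G)$.} Suppose $V$ is a faithful irreducible $G$-module over a finite field $F$. By Schur's lemma, $E=\End_{FG}(V)$ is a division ring, and it is finite, so it is a field by Wedderburn's theorem. The center $Z(G)$ acts through $FG$-endomorphisms, hence embeds in $E^\times$, and faithfulness makes this embedding injective. As a finite subgroup of the multiplicative group of a field, $Z(G)$ is cyclic. Finally $Z(G)=Z(P)\times A$, so $A\le Z(G)$ is cyclic as well.

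\emph{Main obstacle.} The only nonroutine ingredient is the regular-orbit statement, and that is supplied by Proposition~\ref{Gprime}. The remaining steps are elementary. The point needing care is the bilinearity of the commutator pairing, which relies on $P'\le Z(P)$ and the class-$2$ identities $[xy,z]=[x,z][y,z]$.
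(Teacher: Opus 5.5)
Your proposal is correct and follows the paper's proof essentially step for step. You reprove Lemma~\ref{comm}(a) inline to get $G'\le Z(G)$ and $G=P\times A$, use the nondegenerate alternating commutator form on the elementary abelian group $P/Z(P)$, invoke Proposition~\ref{Gprime} for the regular orbits, and use Schur's lemma to embed $Z(G)$, and hence $A$, in the multiplicative group of a finite field.
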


Examples of $p$-groups $P$ as in Corollary~\ref{corD} are the extraspecial groups of order $p^{1+2m}$, their central products with cyclic $p$-groups, and the groups $C_{p^{c+1}}\rtimes C_p$ with $c\ge1$, where a generator of $C_p$ acts by raising to the power $1+p^c$. For instance, the Heisenberg group of order $p^3$ acting on $V(p,s)$ for a prime power $s\equiv1\pmod p$ satisfies $|G:G'|=M/p$ (Example~\ref{ex:heis}).

We emphasize that, in contrast to \cite[Theorem A]{CK24b}, the characterization of equality in Theorem~\ref{thmC}(c) does not impose any restriction on the module $V$. The reason is that in this situation nilpotent groups always have regular orbits (Theorem~\ref{Yang} below; for groups of type $E(p,m)$ with $p$ odd see also \cite[Lemma 1.5(i)]{Pa69}), so that extremality is caused solely by the commutator subgroup being as small as possible, whereas in \cite{CK24b} it is caused by the absence of regular orbits. The hypothesis that $|V|$ is odd cannot be omitted in Theorem~\ref{thmC}(c), even for nilpotent groups: the group $C_3\wr C_3$ acting monomially on $V(3,4)$ satisfies $|G:G'|=M/3$, although $|G'|=9$ (Example~\ref{ex:wr}). We do not know whether such examples exist for $p\ge5$ (Question~\ref{q1}).

The proofs of Theorems~\ref{thmA} and~\ref{thmC} use different methods. For Theorem~\ref{thmA}, we use the inequality $|G:G'|\le|F(G):F(G)'|$ of \cite{KY20} (Theorem~\ref{KYF} below) and the nilpotent case to reduce to the situation that $F(G)$ is abelian and every regular orbit of $F(G)$ on $V$ is $G$-invariant. In this situation, either $V_{F(G)}$ is homogeneous, and then $G$ is a semilinear group which can be treated directly (Section~\ref{sec:semilinear}), or $V$ has characteristic $2$ and the orbits of $G$ are large (Section~\ref{sec:invariant}). The refinement for $|V|$ odd in Theorem~\ref{thmC} is based on the elementary observation that $v$ and $-v$ never lie in the same orbit of a group of odd order acting on a module of odd characteristic (Lemma~\ref{minus}). Together with a theorem of Gluck \cite{Gl83}, this allows us to control imprimitive modules, while quasiprimitive modules are handled by a result of Espuelas \cite{Es91} and the analysis of semilinear groups (Section~\ref{sec:oddV}).

We have adopted standard notation. All groups considered in this paper are finite, and groups act on vector spaces from the right. Extending a notion from analytic number theory to finite groups, if $p$ is a prime, we call a group \emph{$p$-rough} if every prime divisor of its order is at least $p$. Thus the $3$-rough groups are exactly the groups of odd order, and subgroups and quotients of $p$-rough groups are $p$-rough; in the situation of our main results, $G$ is $p$-rough. If a group $X$ acts on a finite set $\Omega$ and $\omega\in\Omega$, we write $\omega^X$ for the $X$-orbit of $\omega$, $C_X(\omega)$ for the stabilizer of $\omega$ in $X$, and $M(X,\Omega)$ for the largest size of an $X$-orbit on $\Omega$; the orbit $\omega^X$ is regular if $C_X(\omega)=1$. If $X$ acts faithfully and completely reducibly on a finite vector space $V$, we put
\[
\rho(X,V)=\frac{M(X,V)}{|X:X'|},
\]
so that $\rho(X,V)\ge1$ by Theorem~\ref{KY} below. For a vector space $V$ we put $V^\#=V\setminus\{0\}$, and for $X\le\GL(V)$ we denote by $C_V(X)$ the space of fixed points of $X$ on $V$. As in \cite{MW93}, a completely reducible $G$-module $V$ is allowed to be of mixed characteristic, i.e., $V=V_1\oplus\cdots\oplus V_t$, where each $V_i$ is an irreducible $G$-module over a finite field of characteristic $p_i$; thus $|V|$ is odd if and only if all $p_i$ are odd. A faithful irreducible $G$-module $V$ is quasiprimitive if $V_N$ is homogeneous for every $N\nsg G$. For a finite field $E$ we write $\Gal(E)$ for the group of all field automorphisms of $E$,
\[
\Gamma(E)=\{x\mapsto ax^\sigma\mid a\in E^\times,\ \sigma\in\Gal(E)\}
\]
for the semilinear group of $E$, and $\Gamma_0(E)=\{x\mapsto ax\mid a\in E^\times\}\cong E^\times$; we also write $\Gamma(q^n)=\Gamma(\GF(q^n))$. As in \cite{CK24b}, $V(n,q)$ denotes the $n$-dimensional vector space over the field with $q$ elements. Two linear groups $X\le\GL(V)$ and $Y\le\GL(W)$ are called isomorphic if there is an additive bijection $\varphi\colon V\to W$ with $\varphi^{-1}X\varphi=Y$. For $x,y$ in a group we write $[x,y]=x^{-1}y^{-1}xy$, and if a group $X$ acts on an abelian group $B$, we write $[B,X]$ for the subgroup generated by all $b^{-1}b^x$ with $b\in B$ and $x\in X$. Finally, by the Feit--Thompson theorem \cite{FT63}, groups of odd order are solvable; this will be used without further reference.

\section{Auxiliary results}\label{sec:aux}

We begin with some results from the literature.

\begin{theorem}[{\cite[Theorem 1.1]{KY16}, \cite[Theorem 1.1]{KY20}}]\label{KY}
Let $G$ be a finite group and let $V$ be a finite faithful completely reducible $G$-module, possibly of mixed characteristic. Then $|G:G'|\le M(G,V)$.
\end{theorem}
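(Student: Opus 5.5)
The statement is Theorem~\ref{KY}, which the paper quotes from \cite{KY16,KY20}. I would prove it by induction on $|G|+\dim V$, first for solvable $G$ and then in general.

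\textbf{Reduction to irreducible modules.} Write $V=V_1\oplus V_2$ with both summands nonzero, and put $K=C_G(V_1)$. Then $G/K$ acts faithfully on $V_1$, and $K$ acts faithfully and completely reducibly on $V_2$.
\begin{itemize}
\item By induction, choose $v_1\in V_1$ with $|G:C_G(v_1)|\ge|G:G'K|$.
\item Apply induction to $C_G(v_1)$ acting on $V_2$. This action need not be faithful, so apply induction to the faithful quotient.
\item For $v=v_1+v_2$ we get $|G:C_G(v)|=|G:C_G(v_1)|\cdot|C_G(v_1):C_G(v_1)\cap C_G(v_2)|$.
\item The second factor must be compared with $|G'K:G'|$. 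Note that $G'K/G'$ is an abelian section controlled by $K$.
\end{itemize}
Two lemmas are needed here.
\begin{enumerate}
\item For $H\le G$, the index $|H:H'|$ controls $|HG':G'|$. This holds because the restriction $H/H'\to HG'/G'$ is onto.
\item $|K:K'|\ge |KG':G'|$.
\end{enumerate}
I would choose $v_1$ so that the stabilizer $S=C_G(v_1)$ satisfies $|SG':G'|\cdot|G:S|\ge|G:G'|$ is preserved. Then I would apply induction to $S$ acting on $V_2$ and multiply the two bounds.

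\textbf{Irreducible, primitive case.} Let $N\nsg G$. Clifford theory gives that either $V$ is induced from a proper subgroup, or $V_N$ is homogeneous for all normal $N$. In the quasiprimitive case, $F(G)$ is of known shape: a central product of a cyclic group with extraspecial pieces. Then:
\begin{itemize}
\item $|G:G'|$ is bounded by $|G/F|$ times a small factor.
\item The large orbits come from the regular orbit of the cyclic part, using the results of Espuelas and Yang on regular orbits of $G/F$-type groups.
\item Semilinear groups $\Gamma(q^n)$ are handled directly. For $G\le\Gamma(q^n)$, we have $|G:G'|\le|G\cap\Gamma_0|\cdot|G:G\cap\Gamma_0|/|[G\cap\Gamma_0,G]|$, and a generic field element gives an orbit of size at least this.
\end{itemize}

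\textbf{Imprimitive case.} Here $V=W^G$ is induced from $H$, and the blocks are permuted by $G/\mathrm{core}$. I would combine:
\begin{itemize}
\item induction on $H$ acting on $W$, giving large orbits $\Omega$;
\item Gluck's result, or Seress's, that a solvable permutation group has a regular orbit on the power set (or on $2$-colourings) with few exceptions.
\end{itemize}
Choosing vectors whose components lie in distinct $H$-orbits distinguishes the blocks. The orbit size is then at least $|G:H|_{\text{perm}}\cdot M(H,W)$. Finally, $|G:G'|\le|P:P'|\cdot|H:H'|$, which follows from transfer (the Gaschütz transfer into $H$).

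\textbf{Nonsolvable case and main obstacle.} Here I would use the solvable radical together with bounds on $|G:G'|$ for nonabelian composition factors. The main obstacle I expect is the imprimitive step: the small exceptional permutation groups without distinguishing partitions must be checked by hand, and the transfer inequality must be matched exactly with the orbit count.
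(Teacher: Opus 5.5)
The paper does not prove this statement; it quotes it from \cite{KY16,KY20}, so there is no internal argument to compare with. Judged on its own, your proposal has genuine gaps.

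\textbf{Reduction to irreducible modules.} This step is fine once corrected. Your condition $|SG':G'|\cdot|G:S|\ge|G:G'|$ is vacuous: it always holds, since $|G:S|\ge|G:SG'|$. Applying induction to $S=C_G(v_1)$ on $V_2$ only produces an orbit of size $|\bar S:\bar S'|$ for the faithful quotient $\bar S$, and that is unrelated to $|KG':G'|$. Apply induction to $K=C_G(V_1)$ instead, which is faithful and completely reducible on $V_2$. Choose $v_2$ with $|v_2^K|\ge|K:K'|$ and use $K\le C_G(v_1)$, so that $|v_2^{C_G(v_1)}|\ge|v_2^K|\ge|K:K'|\ge|KG':G'|$. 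This is Lemma~\ref{product} together with Lemma~\ref{quot}.

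\textbf{Imprimitive case.} This cannot work as planned. The inequality $|G:G'|\le|P:P'|\cdot|H:H'|$ is true; it follows from Lemma~\ref{quot} and $H'\le G'$, with no transfer needed. But it overshoots $M(G,V)$, because $H=N_G(W_1)$ also acts on the other blocks. For $G=C_2\wr C_2\cong D_8$ acting monomially on $V(2,3)$ we have $|P:P'|\cdot|H:H'|=2\cdot4=8$, while $M(G,V)=|G:G'|=4$. An orbit bound of the form $|P|\cdot M(H,W)$, which is what distinguishing the blocks would give, is also false in general. The signed permutation group $C_2\wr S_4$ on $V(4,3)$ has $|P|=24$ and $M(H,W_1)=2$, yet $M(G,V)=32<48$. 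Finally, colouring the blocks via Gluck's theorem requires two $H$-invariant classes in $W_1^\#$ with equal stabilizers in $K$. The paper gets these only from $w\mapsto-w$ when $|G|$ and the characteristic are odd (Lemma~\ref{minus} and Proposition~\ref{imprim}). In general they need not exist: in characteristic $2$ one has $-w=w$, and if $H$ is transitive on $W_1^\#$ there is only one class.

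\textbf{Primitive and nonsolvable cases.} The claim that $|G:G'|$ is $|G/F(G)|$ times a small factor is false: $G=\Gamma_0(E)$ acting on $E$ has $F(G)=G$ and $|G:G'|=|G|$. The appeal to regular orbits of the cyclic part and of $G/F(G)$ is never turned into a lower bound for $M(G,V)$, and $G$ itself need not have regular orbits (e.g.\ $\mathrm{SL}(2,3)$ on $V(2,3)$). For $G\le\Gamma(E)$, your ``generic field element'' amounts to finding $x\in E^\#$ with $|C_G(x)|\le|[G\cap\Gamma_0(E),G]|$. That needs a genuine counting argument like step~(5) in the proof of Proposition~\ref{semi}. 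The nonsolvable case is not addressed at all.

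\textbf{The missing key idea.} At least for solvable $G$, it is the purely group-theoretic inequality $|G:G'|\le|F(G):F(G)'|$ (Theorem~\ref{KYF}, from \cite{KY20}). The paper itself uses it to start the proof of Theorem~\ref{irred}. Since $M(G,V)\ge M(F(G),V)$ and $V$ is a faithful completely reducible $F(G)$-module, it reduces the theorem to nilpotent groups and makes any primitive/imprimitive analysis of $G$ itself unnecessary. The nilpotent case splits into $p$-groups by the tensor factorization used in the proof of Proposition~\ref{nilpotent}. There Theorem~\ref{Yang} gives regular orbits except for groups involving $C_2\wr C_2$ or $C_r\wr C_r$ with $r$ a Mersenne prime. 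These configurations, which include equality cases such as $D_8$ on $V(2,3)$, still need a direct argument. For nonsolvable $G$ a further idea is required, since the analogous inequality with $F^*(G)$ in place of $F(G)$ fails, e.g.\ for $S_5$.
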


\begin{theorem}[{\cite[Theorem 3.1]{CK24b}}]\label{CK}
Let $G$ be a finite nonabelian group and let $V$ be a finite faithful completely reducible $G$-module, possibly of mixed characteristic. If $|G:G'|=M(G,V)$, then $G=P\times A$, where $P$ is the Sylow $2$-subgroup of $G$ and $A$ is an abelian group of odd order.
\end{theorem}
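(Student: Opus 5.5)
The plan is to first establish that $G$ is nilpotent, then split off the Sylow $2$-subgroup and show that the equality $|G:G'|=M(G,V)$ passes to each odd Sylow subgroup. Finally, I would rule out nonabelian odd Sylow subgroups using regular orbits.

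\emph{Step 1: nilpotency.} As recalled in the introduction, a nonabelian $G$ with $|G:G'|=M(G,V)$ is nilpotent (\cite{QY21}, \cite[Theorem 2.1]{CK24b}). I would take this as the first step. If it has to be proved directly, I would argue by induction and use $|G:G'|\le|F(G):F(G)'|$ (the result of \cite{KY20}). I expect this to be the main obstacle if it is not simply quoted, since it requires controlling non-nilpotent configurations. So write $G=P\times R_1\times\cdots\times R_k$, where $P$ is the Sylow $2$-subgroup and the $R_i$ are the odd Sylow subgroups.

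\emph{Step 2: splitting the equality.} Let $X,Y$ be commuting normal subgroups with $G=X\times Y$. Each is normal, so by Clifford's theorem $V$ is a faithful completely reducible module for $X$ and for $Y$. For $v\in V$ we have $C_G(v)=C_X(v)\times C_Y(v)$, because the orders of $X$ and $Y$ are coprime and $C_G(v)$ is a subgroup of a direct product of groups of coprime orders. Hence $|v^G|=|v^X|\,|v^Y|$, and therefore $M(G,V)\le M(X,V)\,M(Y,V)$. Combining this with Theorem~\ref{KY} for $X$ and for $Y$ gives
\[
|X:X'|\,|Y:Y'|=|G:G'|=M(G,V)\le M(X,V)M(Y,V),
\]
and each factor on the right is at least the corresponding factor on the left. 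So equality holds factorwise. Iterating, I obtain $|R_i:R_i'|=M(R_i,V)$ for every odd Sylow subgroup $R_i$.

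\emph{Step 3: odd Sylow subgroups are abelian.} Fix an odd prime $r$ and $R=R_i$. Any irreducible constituent of $V_R$ in characteristic $r$ is trivial, since an $r$-group fixes a nonzero vector there. So $R$ acts faithfully on the sum $W$ of the constituents of characteristic different from $r$, and this action is coprime and completely reducible. A nilpotent group of odd order acting faithfully, coprimely and completely reducibly has a regular orbit (the odd-order case of Passman's analysis of $E(p,m)$-type groups, respectively the result quoted later as Theorem~\ref{Yang}). Hence $M(R,V)=|R|$, and equality forces $R'=1$. Thus $A=R_1\times\cdots\times R_k$ is abelian of odd order and $G=P\times A$.
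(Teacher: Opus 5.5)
The paper does not prove this statement; it quotes it from \cite[Theorem 3.1]{CK24b}, so your proposal has to stand on its own. Quoting nilpotency in Step 1 is acceptable, since the paper quotes the same fact. Steps 2 and 3, however, each contain a genuine gap.

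\emph{Step 2 does not force equality.} In Step 2 all the inequalities point the same way. From
\[
|X:X'|\,|Y:Y'|=M(G,V)\le M(X,V)\,M(Y,V),\qquad M(X,V)\ge|X:X'|,\qquad M(Y,V)\ge|Y:Y'|
\]
nothing follows. All three bound $M(X,V)$ and $M(Y,V)$ from below, and they are consistent with $M(X,V)>|X:X'|$. The largest $X$-orbits and the largest $Y$-orbits may lie on different vectors, so $M(G,V)=\max_v|v^X|\,|v^Y|$ can be smaller than $M(X,V)M(Y,V)$.

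To conclude anything you need the reverse kind of bound: a single vector that is simultaneously good for both factors. The standard way to get one is the argument of Proposition~\ref{nilpotent}:
\begin{itemize}
\item pass to an irreducible constituent $V_1$ on which the image $\bar R$ of $R$ is nonabelian; Lemmas~\ref{product} and~\ref{quot} handle the remaining summands and $|G:G'|$;
\item make $V_1$ absolutely irreducible and write $V_1=X\otimes Y$, with $X$ faithful for $\bar R$ and $Y$ faithful for the $r'$-part;
\item use coprimality of the scalar orders to get $C(x\otimes y)=C(x)\times C(y)$.
\end{itemize}
This reduces the problem to proving the \emph{strict} inequality $M(\bar R,X)>|\bar R:\bar R'|$ for a nonabelian $r$-group $\bar R$, $r$ odd, acting faithfully and irreducibly.

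\emph{Step 3 relies on a false regular-orbit claim.} It is not true that nilpotent groups of odd order always have regular orbits on faithful coprime completely reducible modules. Theorem~\ref{Yang} explicitly excludes, in characteristic $2$, groups involving $C_r\wr C_r$ for a Mersenne prime $r$. Example~\ref{ex:wr} shows the exclusion is needed: $C_3\wr C_3$ acting irreducibly on $V(3,4)$ has order $81>64=|V|$, so it has no regular orbit. Passman's lemma only covers groups of type $E(p,m)$, which do not include this example.

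So when $r$ is a Mersenne prime and $V$ has constituents in characteristic $2$, you cannot conclude $M(R,V)=|R|$. You need a separate argument showing $M(\bar R,X)>|\bar R:\bar R'|$ even when no regular orbit exists; in the example, $M=27>9=|G:G'|$. This strict inequality is the real content of the cited theorem. The paper itself obtains it in Proposition~\ref{nilpotent} only through Corollary~\ref{collapse}, that is, through the very statement under discussion. It cannot be extracted from Theorem~\ref{Yang} alone.
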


\begin{lemma}[{\cite[Lemma 2.1]{KY16}}]\label{quot}
Let $G$ be a finite group and let $N\nsg G$. Then
\[
|G:G'|=|G:G'N|\cdot|N:N\cap G'|,
\]
and $|G:G'|$ divides $|G/N:(G/N)'|\cdot|N:N'|$.
\end{lemma}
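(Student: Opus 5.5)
The first identity follows from two isomorphism-theorem computations and a single intersection identity. Consider the chain $G\ge G'N\ge G'$. This gives
\[
|G:G'|=|G:G'N|\cdot|G'N:G'|,
\]
and the second isomorphism theorem yields $G'N/G'\cong N/(N\cap G')$. Hence $|G'N:G'|=|N:N\cap G'|$, which is the first claim.

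For the divisibility statement I will identify each factor with a section of $G/N$ or of $N/N'$.

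For the first factor, $(G/N)'=G'N/N$. Therefore
\[
|G/N:(G/N)'|=|G:G'N|.
\]

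For the second factor, note that $N'\le G'$ and $N'\le N$, so $N'\le N\cap G'$. Consequently $N/(N\cap G')$ is a quotient of $N/N'$, and $|N:N\cap G'|$ divides $|N:N'|$.

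Multiplying the two pieces, $|G:G'|=|G/N:(G/N)'|\cdot|N:N\cap G'|$ divides $|G/N:(G/N)'|\cdot|N:N'|$. All groups are finite, so every index is a positive integer and the divisibility statements make sense.

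There is no real obstacle. The only point needing care is the inclusion $N'\le N\cap G'$, which converts the exact equality into a divisibility rather than an equality.
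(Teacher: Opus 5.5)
Your proof is correct and complete. The chain $G\ge G'N\ge G'$ together with $G'N/G'\cong N/(N\cap G')$ gives the identity, and $(G/N)'=G'N/N$ together with $N'\le N\cap G'$ gives the divisibility. The paper does not prove this lemma itself but quotes it from \cite[Lemma 2.1]{KY16}; your argument is the standard elementary proof behind that citation.
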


\begin{theorem}[{\cite[Theorem 2.2]{KY20}}]\label{KYF}
Let $G$ be a finite group such that $F^*(G)=F(G)$; for instance, let $G$ be solvable. Then
\[
|G:G'|\le|F(G):F(G)'|.
\]
\end{theorem}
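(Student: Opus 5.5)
The plan is to split $|G:G'|$ along $F=F(G)$ using Lemma~\ref{quot}, and then to bound the part coming from $G/F$ by the part of $F$ that lies inside $G'$. By Lemma~\ref{quot},
\[
|G:G'|=|G:G'F|\cdot|F:F\cap G'|,\qquad\text{while}\qquad |F:F'|=|F:F\cap G'|\cdot|F\cap G':F'|.
\]
So it suffices to prove
\[
|G:G'F|\le|F\cap G':F'|.
\]
Put $H=G/F$, so that $|G:G'F|=|H:H'|$. Since $[F,G]\le F\cap G'$, it is enough to show that $|H:H'|$ is at most the size of some section of $[F,G]F'/F'$.

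For that section I will use the Frattini quotient of $G$ rather than of $F$. By Gaschütz's theorem, $\Phi(G)\le F$ and $V:=F/\Phi(G)$ is abelian and a direct product of minimal normal subgroups of $G/\Phi(G)$. Hence $V$ is a completely reducible $G$-module, possibly of mixed characteristic, on which $F$ acts trivially. Moreover $F'\le\Phi(F)\le\Phi(G)$.

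The key step is that $H$ acts faithfully on $V$, i.e.\ $K:=C_G(F/\Phi(G))\le F$. Granting this, Theorem~\ref{KY} gives $|H:H'|\le M(H,V)$. For any $v\in V$ the orbit $v^H$ lies in the coset $v+[V,H]$, so $M(H,V)\le|[V,H]|$. Now
\[
[V,H]=[F,G]\Phi(G)/\Phi(G),
\]
which is a quotient of $[F,G]F'/F'$ because $F'\le\Phi(G)$. Therefore
\[
|H:H'|\le|[F,G]F':F'|\le|F\cap G':F'|,
\]
and this finishes the argument.

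I expect the faithfulness claim $K\le F$ to be the main obstacle; it is exactly where the hypothesis $F^*(G)=F(G)$ enters. For solvable $G$ this is the classical fact $C_G(F(G)/\Phi(G))=F(G)$ (Gaschütz; see Huppert III.4), and I would first try to quote it. In general I would argue as follows. $K$ is normal in $G$, and $K\Phi(G)/\Phi(G)$ centralizes the abelian group $F/\Phi(G)$. I would then show that $K\Phi(G)/\Phi(G)$ is nilpotent by considering $KF/\Phi(G)$. Its normal subgroup $F/\Phi(G)$ is central in $K\Phi(G)/\Phi(G)$, and $F^*(G)=F(G)$ should prevent components from appearing in $K$. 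Once $K\Phi(G)/\Phi(G)$ is nilpotent, the standard fact that a normal subgroup $N$ with $N\Phi(G)/\Phi(G)$ nilpotent is itself nilpotent gives that $K$ is nilpotent, hence $K\le F$. If this direct route resists, I would fall back on a coprime-action argument on each Sylow subgroup of $F$, using $C_G(F)\le F$.
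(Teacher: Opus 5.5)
The paper gives no proof of this statement. It is quoted from Keller--Yang, and the paper only applies it to groups of odd order, which are solvable. For solvable $G$ your argument is complete and correct. The reduction to $|G:G'F|\le|F\cap G':F'|$ via Lemma~\ref{quot} works. So do Gasch\"utz's description of $F/\Phi(G)$ and the classical equality $C_G(F/\Phi(G))=F$. Theorem~\ref{KY}, the bound $M(H,V)\le|[V,H]|$ (each orbit lies in a coset of $[V,H]$), and the passage from $[V,H]=[F,G]\Phi(G)/\Phi(G)$ to a quotient of $[F,G]/F'$ are also fine.

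In the stated generality, however, your key step is false, so neither of your routes (nilpotence of $K\Phi(G)/\Phi(G)$, or the coprime-action fallback) can succeed. The hypothesis $F^*(G)=F(G)$ does not imply $C_G(F/\Phi(G))\le F$, because $E(G)=1$ does not imply $E(G/\Phi(G))=1$.

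\emph{Counterexample.} Take the nonsplit extension $G=2^3\cdot\GL(3,2)$ of the natural module $N\cong C_2^3$; it exists, e.g.\ as a maximal subgroup of $G_2(3)$. Since $N$ is an abelian minimal normal subgroup without a complement, $N\le\Phi(G)$. Hence $\Phi(G)=N=F(G)$. Also $E(G)\le C_G(N)=N$, so $F^*(G)=F(G)$. But $F/\Phi(G)=1$, so $K=G$, and $K\Phi(G)/\Phi(G)\cong\GL(3,2)$ is simple. Passing to $F/\Phi(G)$ discards $\Phi(G)/\Phi(F)$, and that section can carry the whole action.

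\emph{Repair.} Change the module, not the faithfulness argument. Take part $F'=B_0<B_1<\cdots<B_k=F$ of a chief series of $G$, and let $W=\bigoplus_iB_i/B_{i-1}$. This is a completely reducible $G$-module on which $F$ acts trivially. Let $C$ be the kernel of $G$ on $W$, and let $x\in C$ be a $q'$-element.
\begin{itemize}
\item $x$ stabilizes the series $(B_i\cap O_q(F))_i$ running from $O_q(F)'$ to $O_q(F)$. By coprime action and Burnside's theorem, $x$ centralizes $O_q(F)$.
\item Hence $C/C_C(O_q(F))$ is a $q$-group for every prime $q$. Since $C_G(F)\le F$, the quotient $C/Z(F)$ is nilpotent.
\item Let $Q\in\Syl_q(C)$. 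Then $QZ(F)\nsg C$, $O_q(F)\le Q$, and $Q$ centralizes $O_{q'}(F)$. So $QZ(F)=Q\times O_{q'}(Z(F))$, and therefore $Q\nsg C$.
\end{itemize}
Thus $C$ is nilpotent, so $C=F$, and $H=G/F$ acts faithfully on $W$. Theorem~\ref{KY} now gives $|H:H'|\le M(H,W)\le|[W,H]|=\prod_i|[B_i,G]B_{i-1}:B_{i-1}|$. Since $[B_i,G]\le[F,G]\cap B_i$, this product telescopes to at most $|[F,G]:F'|\le|F\cap G':F'|$. That is exactly the inequality your reduction needs.
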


\begin{lemma}\label{product}
Let the group $G$ act linearly on the finite vector space $V=U\oplus W$, where $U$ and $W$ are $G$-invariant subspaces. Then
\[
M(G,V)\ge M(G,U)\cdot M(C_G(U),W).
\]
\end{lemma}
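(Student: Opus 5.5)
The idea is to build a single vector in $V$ whose $G$-orbit already has the required size, by gluing a large orbit in $U$ to a large orbit in $W$ for the stabilizer. First pick $u\in U$ whose $G$-orbit has the largest possible size, so $|G:C_G(u)|=M(G,U)$, and put $H=C_G(u)$. The kernel of $G$ on $U$ fixes every vector of $U$, so $C_G(U)\le H$. Next pick $w\in W$ whose $C_G(U)$-orbit has the largest possible size, so $|C_G(U):C_{C_G(U)}(w)|=M(C_G(U),W)$. The candidate vector is $v=u+w\in V$.

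We compute the stabilizer of $v$. Both $U$ and $W$ are $G$-invariant and $V=U\oplus W$, so $g\in G$ fixes $u+w$ exactly when $u^g=u$ and $w^g=w$. Hence
\[
C_G(v)=C_G(u)\cap C_G(w)=C_H(w).
\]
The orbit size therefore factors as
\[
|v^G|=|G:C_H(w)|=|G:H|\cdot|H:C_H(w)|=M(G,U)\cdot|H:C_H(w)|.
\]

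It remains to show that $|H:C_H(w)|\ge |C_G(U):C_{C_G(U)}(w)|$. We have $C_{C_G(U)}(w)=C_G(U)\cap C_H(w)$, so the map $C_G(U)/C_{C_G(U)}(w)\to H/C_H(w)$ on cosets is well defined and injective. Equivalently, the $C_G(U)$-orbit of $w$ is contained in its $H$-orbit, since $C_G(U)\le H$. This gives $|H:C_H(w)|\ge M(C_G(U),W)$.

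Combining the two displays, $M(G,V)\ge |v^G|\ge M(G,U)\cdot M(C_G(U),W)$. The argument is elementary throughout. The only point needing care is the inclusion $C_G(U)\le C_G(u)$, which is what allows the orbit of $w$ to be measured inside the stabilizer $H$.
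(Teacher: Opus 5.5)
Your proposal is correct and is essentially the paper's proof: you choose $u$ and $w$ the same way, use $C_G(u+w)=C_G(u)\cap C_G(w)$ to factor the orbit size, and use $C_G(U)\le C_G(u)$ to compare the $C_G(U)$-orbit of $w$ with its $C_G(u)$-orbit. Your coset-injection argument is just a more explicit form of the paper's orbit inclusion $w^{C_G(U)}\subseteq w^{C_G(u)}$.
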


\begin{proof}
Choose $u\in U$ with $|u^G|=M(G,U)$ and $w\in W$ with $|w^{C_G(U)}|=M(C_G(U),W)$. As $U$ and $W$ are $G$-invariant, $C_G(u+w)=C_G(u)\cap C_G(w)$, and hence
\[
|(u+w)^G|=|G:C_G(u)|\cdot|C_G(u):C_G(u)\cap C_G(w)|=|u^G|\cdot|w^{C_G(u)}|.
\]
Since $C_G(U)\le C_G(u)$, we have $w^{C_G(U)}\subseteq w^{C_G(u)}$, and the assertion follows.
\end{proof}

\begin{lemma}\label{comm}
Let $p$ be an odd prime and let $G$ be a $p$-rough group.
\begin{itemize}
\item[(a)] If $|G'|=p$, then $G'\le Z(G)$; in particular, $G$ is nilpotent.
\item[(b)] If $G$ is not nilpotent, then $|G'|\ge2p+1$.
\end{itemize}
\end{lemma}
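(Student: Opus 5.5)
For (a), I would use that $G'$ is normal of prime order $p$. Then $G/C_G(G')$ embeds in $\Aut(G')\cong C_{p-1}$. Since $G$ is $p$-rough, every prime divisor of $|G/C_G(G')|$ is at least $p$. But such a prime would also divide $p-1<p$, which is impossible. Hence $C_G(G')=G$, that is, $G'\le Z(G)$. Then $[G,G,G]=1$, so $G$ is nilpotent of class at most $2$.

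For (b), I would first record that $G'\neq1$, since $G$ is not nilpotent and hence not abelian. Consider a chief factor $G'/K$ of $G$ lying at the top of $G'$. This exists because $G$ is solvable: here $p$ is odd, so $G$ has odd order, and we may cite Feit--Thompson. The factor $G'/K$ is an elementary abelian $q$-group of order $q^k$, and $q\ge p$ by $p$-roughness.

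The key reduction is to the quotient $\bar G=G/K$, in which $\bar G'=G'/K$ is a minimal normal subgroup. I would split into two cases.

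\emph{Case 1: $\bar G'$ is not central in $\bar G$.} Then $\bar G/C_{\bar G}(\bar G')$ is a nontrivial $p$-rough group acting faithfully and irreducibly on $\bar G'$. Let $r\ge p$ be a prime dividing the order of this group. An element of order $r$ acts nontrivially on $\bar G'$ with no nonzero fixed points on the irreducible module, which forces $|\bar G'|\equiv1\pmod r$. Since $|\bar G'|$ is odd and $r$ is odd, $|\bar G'|-1$ is an even multiple of $r$. Therefore $|G'|\ge|\bar G'|\ge 2r+1\ge 2p+1$.

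\emph{Case 2: $\bar G'$ is central in $\bar G$.} Then $\bar G$ is nilpotent of class $2$, so $G'\neq K$ carries more information, and I must use that $G$ itself is not nilpotent. If $|G'|=q$ is prime, I would run the argument of (a) with the prime $q\ge p$. This gives $G'\le Z(G)$, so $G$ is nilpotent, a contradiction. So $|G'|$ is composite. If $|G'|$ is divisible by two primes that are each at least $p$, then $|G'|\ge p^2\ge 2p+1$ when $p\ge3$. The same bound holds whenever $|G'|$ is a nontrivial product of at least two primes $\ge p$, which covers every composite case.

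This gives the proof by a cleaner uniform argument: if $|G'|$ is prime, use (a)-style reasoning to reach a contradiction; otherwise $|G'|\ge p^2>2p+1$ for $p\ge3$. In either branch the bound $|G'|\ge 2p+1$ follows. I expect the main subtlety to be only the prime case, namely checking that the (a) argument works with $q$ in place of $p$. It does, because $\Aut(C_q)$ has order $q-1<p\le$ every prime divisor of $|G|$ when $q\ge p$.
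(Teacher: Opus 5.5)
\textbf{Part (a)} is the paper's argument verbatim. \textbf{Part (b)} has a genuine error.

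\textbf{The prime case.} The step you single out as the ``main subtlety'' is false. We have $|\Aut(C_q)|=q-1<p$ only when $q=p$; for $q>p$ the number $q-1$ can have prime divisors $\ge p$. For instance, $G=C_7\rtimes C_3$ with $p=3$ is $3$-rough and non-nilpotent, and $|G'|=7$ is prime. So your ``cleaner uniform argument'' would prove that this group does not exist.

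Inside your case split, the prime subcase of Case~2 is indeed contradictory, but for a different reason. If $|G'|$ is prime then $K=1$, and the Case~2 hypothesis then says $G'\le Z(G)$, so $G$ is nilpotent.

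\textbf{Case~1.} This needs a justification you did not give. Since $\bar G'$ is abelian, $\bar G'\le C_{\bar G}(\bar G')$, so $\bar G/C_{\bar G}(\bar G')$ is abelian. That is what makes $C_{\bar G'}(x)$ normal in $\bar G$, hence trivial by minimality. In a general irreducible linear group an element of order $r$ may have nonzero fixed points, which is why the paper passes to an irreducible $\langle x\rangle$-submodule $U$.

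\textbf{The repair.} Fixing the prime case gives a much shorter proof of (b) than the paper's.

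Suppose $|G'|=q$ is prime. Then $G'\not\le Z(G)$, since otherwise $G$ would be nilpotent of class at most $2$. So $G/C_G(G')$ is a nontrivial subgroup of $\Aut(G')\cong C_{q-1}$. Every prime $r$ dividing its order divides $|G|$, so $r\ge p$ and $r$ is odd. As $q$ is odd, $q-1$ is even, so $2r\mid q-1$. Hence $|G'|=q\ge 2r+1\ge 2p+1$.

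If $|G'|$ is not prime, then $|G'|\ge p^2>2p+1$.

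This dichotomy alone proves (b). It uses no chief factors, no Maschke's theorem and no appeal to solvability.

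\textbf{Comparison with the paper.} The paper takes an arbitrary noncentral chief factor $H/K$ and shows $|H/K|\le|G'|$. It uses $O_q(G/C_G(H/K))=1$ to find an element $x$ of order $r\ne q$ acting nontrivially. It then bounds the size of an irreducible $\langle x\rangle$-submodule on which $x$ acts fixed point freely. That route yields the additional fact that every noncentral chief factor of $G$ has order at least $2p+1$, which the lemma as stated does not need.
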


\begin{proof}
(a) The group $G/C_G(G')$ is isomorphic to a subgroup of $\Aut(G')$, which has order $p-1$. As every prime divisor of $|G|$ is at least $p$, it follows that $G=C_G(G')$, i.e., $G'\le Z(G)$. Then $G/Z(G)$ is abelian, and $G$ is nilpotent.

(b) As $G$ is solvable and not nilpotent, not all chief factors of $G$ are central, since otherwise a chief series of $G$ would be a central series. So let $H/K$ be a chief factor of $G$ with $[H,G]\not\le K$. Then $H/K$ is an elementary abelian $q$-group for some prime $q$, and it is an irreducible module over $\GF(q)$ for $G/C$, where $C=C_G(H/K)<G$. Since $[H,G]\le H\cap G'$, the normal subgroup $(H\cap G')K/K$ of $G/K$ is nontrivial and contained in $H/K$, so that $H=(H\cap G')K$ and $|H/K|=|H\cap G':K\cap G'|\le|G'|$. Let $R/C=O_q(G/C)$. As $R/C$ is a $q$-group, $C_{H/K}(R)\ne1$, and since $C_{H/K}(R)$ is normal in $G/K$, we get $C_{H/K}(R)=H/K$, i.e., $R=C$. As $G/C\ne1$, there is a prime $r\ne q$ dividing $|G/C|$, and we choose $x\in G$ such that $xC$ has order $r$. Then $r\ge p$, and $x$ acts nontrivially on $H/K$. By Maschke's theorem, there is an irreducible $\langle x\rangle$-submodule $U$ of $H/K$ on which $x$ acts nontrivially. Then $C_U(x)$ is an $\langle x\rangle$-submodule of $U$ different from $U$, so $C_U(x)=1$, and $\langle xC\rangle$ acts semiregularly on $U^\#$. Hence $r$ divides $|U|-1$. As $|U|$ and $r$ are odd, $|U|-1$ is an even multiple of $r$, and therefore $|G'|\ge|H/K|\ge|U|\ge2r+1\ge2p+1$.
\end{proof}

\begin{lemma}\label{abelian}
Let $A$ be an abelian group acting faithfully and completely reducibly on the finite vector space $V$, possibly of mixed characteristic.
\begin{itemize}
\item[(a)] If $W\le V$ is an $A$-submodule such that $W_A$ is homogeneous, then the subalgebra of $\End(W)$ generated by the image of $A$ is a field, and $C_A(w)=C_A(W)$ for all $w\in W^\#$.
\item[(b)] $A$ has a regular orbit on $V$.
\item[(c)] The set of all $v\in V$ with $C_A(v)=1$ spans $V$.
\end{itemize}
\end{lemma}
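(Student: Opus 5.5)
Write $V=V_1\oplus\cdots\oplus V_t$ with each $V_i$ an irreducible $A$-module over a finite field $F_i$. The three parts are proved in the order (a), (c), (b), with (b) derived from (c) or proved directly.

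For (a), let $W$ be a homogeneous $A$-submodule, so $W\cong U^k$ for an irreducible $A$-module $U$ over $F=\GF(q)$. Let $R$ be the $F$-subalgebra of $\End(W)$ generated by the image of $A$. Since $A$ is abelian, $R$ is commutative. Since $W$ is completely reducible, $R$ is semisimple, so $R$ is a product of fields. Each simple factor of $R$ corresponds to an isotypic component of $W$. Because $W$ is homogeneous there is only one such factor, so $R$ is a field $E$. (Concretely, $R$ embeds in $\End_F(U)$ via the action on one copy of $U$, and by Schur's lemma its image lies in the field $\End_A(U)$.) Then $W$ is an $E$-vector space. For $a\in A$ with $w^a=w$ and $w\ne0$, the element $a-1\in E$ annihilates the nonzero vector $w$. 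Since $E$ is a field, $a-1=0$ on $W$. Hence $C_A(w)=C_A(W)$.

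For (c), decompose $V$ into its homogeneous components $W_1,\dots,W_s$. By (a), for each $j$ the group $\bar A_j=A/C_A(W_j)$ acts semiregularly on $W_j^\#$. For $v=\sum w_j$ with every $w_j\ne0$ we get
\[
C_A(v)=\bigcap_j C_A(w_j)=\bigcap_j C_A(W_j)=C_A(V)=1,
\]
by faithfulness. So every vector with all components nonzero has trivial stabilizer. These vectors span $V$. To see this, fix $j$ and $x\in W_j$, and choose nonzero $y_i\in W_i$ for all $i\ne j$. If $x\ne0$, take $w\in W_j$ with $w\ne0$ and $x+w\ne0$; such $w$ exists unless $|W_j|=2$, and in that case $W_j^\#=\{x\}$. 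Then $x=(x+w+\sum y_i)-(w+\sum y_i)$ is a difference of two such vectors. In the case $|W_j|=2$, the vector $x+\sum y_i$ is itself such a vector, and $\sum_{i\ne j}y_i$ is handled by the same argument applied to the components $W_i$, $i\ne j$. Alternatively, we can avoid this casework: the complement of the set of such vectors is $\bigcup_j\bigoplus_{i\ne j}W_i$, a union of proper subspaces. The span of the good vectors is a subspace $S$ with $V\setminus S\subseteq\bigcup_j\bigoplus_{i\ne j}W_i$. For each $j$, a vector with $j$-component nonzero, added to $S$, still lies in $S$, which forces $W_j\subseteq S$.

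Part (b) is immediate from either description, since $V^\#$ is nonempty whenever $A\ne1$, and the trivial case $A=1$ is clear. The main subtlety is in (a): one must make sure $W$ is a single isotypic component, including over different fields in the mixed-characteristic setting. This is harmless because each $W_j$ is contained in a single $V_i$ of fixed characteristic.
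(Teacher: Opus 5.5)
Your proof follows the paper's route. For (a) you realize the algebra generated by $A$ inside $\End_A(U)$ for a single constituent $U$, use Schur's lemma, and then use invertibility of $a-1$. For (b) and (c) you pass to homogeneous components and write an arbitrary vector as a difference of two vectors with trivial stabilizer.

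\textbf{Unjustified step.} When $|W_j|=2$ you say that $\sum_{i\ne j}y_i$ ``is handled by the same argument applied to the components $W_i$, $i\ne j$''. This is only non-circular if $|W_i|>2$ for every $i\ne j$. That is true, but you need to say why:
\begin{itemize}
\item[(i)] a component of order $2$ is $\GF(2)$ with trivial $A$-action;
\item[(ii)] all such modules are isomorphic, so they lie in one homogeneous component;
\item[(iii)] hence at most one homogeneous component has order $2$.
\end{itemize}
Any correct argument must use this fact. If two components of order $2$ could occur, the vectors with all coordinates nonzero would span only a line.

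\textbf{The casework-free alternative is not valid as written.} It has the same hole. A vector space over $\GF(2)$ can be a union of proper subspaces. The step ``a vector with $j$-component nonzero, added to $S$, still lies in $S$'' tacitly requires $|W_j|>2$, and that is exactly the casework you were trying to avoid.

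\textbf{How the paper sidesteps this.} It does not insist that all components be nonzero in both vectors. Given $x=\sum_j x_j$, it picks $v_j\in W_j^\#$ with $v_j\ne -x_j$ only on components where $A$ acts nontrivially, and these automatically have order $>2$. On components with trivial action the choice does not matter, since they contribute $C_A(W_j)=A$ to the stabilizer anyway. Then $v$ and $v+x$ both have trivial stabilizer. This gives (b) for $x=0$ and (c) from $x=(v+x)-v$ in one stroke.

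\textbf{Minor inaccuracy.} Your closing remark that each $W_j$ lies in a single $V_i$ is wrong, since a homogeneous component may be a sum of several isomorphic $V_i$. What is true, and all you need, is that it lies in the part of $V$ of one characteristic.
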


\begin{proof}
(a) Let $W$ be defined over the field $F$, and let $U$ be an irreducible constituent of $W_A$. As $W_A\cong U\oplus\cdots\oplus U$, the $F$-subalgebra $E$ of $\End_F(W)$ generated by the image of $A$ is isomorphic to the image of $FA$ in $\End_F(U)$. As $A$ is abelian, the latter lies in $\End_{FA}(U)$, which is a division ring by Schur's lemma. Thus $E$ is a finite commutative ring without zero divisors, i.e., a field. If $a\in A\setminus C_A(W)$, then $a-1$ is a nonzero element of $E$, hence invertible, and so $a$ fixes no vector in $W^\#$.

(b) and (c). Let $V_1,\ldots,V_n$ be the homogeneous components of $V_A$, and let $x=x_1+\cdots+x_n\in V$ with $x_j\in V_j$. For each $j$ choose $v_j\in V_j^\#$ with $v_j\ne-x_j$, unless $A$ acts trivially on $V_j$; this is possible, as $|V_j|>2$ if $A$ acts nontrivially on $V_j$. Put $v=v_1+\cdots+v_n$. Then $v_j+x_j\ne0$ unless $C_A(V_j)=A$, so (a) yields $C_A(v)=C_A(v+x)=\bigcap_jC_A(V_j)=1$. For $x=0$ this proves (b), and (c) follows from $x=(v+x)-v$.
\end{proof}

\begin{corollary}\label{collapse}
Let $G$ be a finite group of odd order and let $V$ be a finite faithful completely reducible $G$-module, possibly of mixed characteristic. Then $|G:G'|=M(G,V)$ if and only if $G$ is abelian.
\end{corollary}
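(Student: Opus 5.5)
The plan is to prove the two directions separately, with the forward implication the substantive one.

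If $G$ is abelian, then $G'=1$. By Lemma~\ref{abelian}(b), $G$ has a regular orbit on $V$, so $M(G,V)=|G|=|G:G'|$. This direction is routine.

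Conversely, suppose $|G:G'|=M(G,V)$ and, for a contradiction, that $G$ is nonabelian. Theorem~\ref{CK} then gives $G=P\times A$, where $P$ is the Sylow $2$-subgroup of $G$ and $A$ is abelian of odd order. Since $|G|$ is odd, $P=1$. Hence $G=A$ is abelian, which contradicts the assumption.

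All the real content sits in the cited Theorem~\ref{CK}. The only point to check is that its conclusion forces $G$ to be abelian once the Sylow $2$-subgroup is trivial, and this is immediate. No other obstacle is expected.
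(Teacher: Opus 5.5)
Your proposal is correct and is essentially the paper's own proof: the nonabelian case is ruled out by Theorem~\ref{CK}, since odd order forces $P=1$, and the abelian case follows from the regular orbit given by Lemma~\ref{abelian}(b), so that $M(G,V)=|G|=|G:G'|$.
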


\begin{proof}
If $G$ is nonabelian and $|G:G'|=M(G,V)$, then by Theorem~\ref{CK} we have $G=P\times A$, where $P$ is the Sylow $2$-subgroup of $G$ and $A$ is abelian. As $|G|$ is odd, $P=1$, and $G=A$ is abelian, a contradiction. Conversely, if $G$ is abelian, then $M(G,V)=|G|=|G:G'|$ by Lemma~\ref{abelian}(b).
\end{proof}

The following structural fact is well known.

\begin{lemma}\label{semilinear}
Let $G$ be a finite solvable group and let $V$ be a finite faithful irreducible $G$-module over the field $F$. Suppose that $T=F(G)$ is abelian and that $V_T$ is homogeneous; for instance, let $V$ be quasiprimitive and $F(G)$ cyclic. Then there is a finite field $E\supseteq F$ such that $V$ can be identified with $E$ in such a way that $G\le\Gamma(E)$ and $T=G\cap\Gamma_0(E)$. In particular, $T$ is cyclic.
\end{lemma}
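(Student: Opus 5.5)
The plan is the standard Clifford-theoretic reduction to a single irreducible constituent, followed by the realization of the centralizer algebra as a field. Put $T=F(G)$, which is abelian with $V_T$ homogeneous, say $V_T\cong U\oplus\cdots\oplus U$ for an irreducible $FT$-module $U$. By Lemma~\ref{abelian}(a), the $F$-subalgebra $E$ of $\End_F(V)$ generated by the image of $T$ is a finite field containing $F$; it is isomorphic to $\End_{FT}(U)$, and $T$ embeds in $E^\times$. In particular $T$ is cyclic. Since $T\nsg G$, conjugation by elements of $G$ maps the image of $T$ to itself, so it normalizes $E$. This gives a homomorphism $G\to\Aut(E/F)=\Gal(E/F)$ whose kernel is $C:=C_G(E)=C_G(T)$.

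The crucial step is to show $C_G(T)=T$, i.e. that $T$ is self-centralizing. This is the standard fact $C_G(F(G))\le F(G)$ for solvable $G$, and I would quote it. With this, $G/T$ embeds in $\Gal(E/F)$, which is cyclic of order $[E:F]$.

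Next, $V$ is a vector space over $E$, and I need $\dim_EV=1$. Let $W$ be an $E$-subspace of $V$ of dimension $1$, so $W=vE$. The idea is that $G$ acts $E$-semilinearly, and it is natural to compare $|G:T|\le[E:F]$ with $\dim_E V$; but this does not directly force dimension one, and I expect this to be the main obstacle. Here I would use that $G/T$ is cyclic. Choose $g\in G$ whose image generates $G/T$, inducing $\sigma\in\Gal(E/F)$. Then $EG$ is the crossed-product algebra $E\langle g\rangle$, and $V$ is an irreducible module for it.

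To get the dimension, let $K$ be the fixed field of the image of $G$ in $\Gal(E/F)$, and set $m=[E:K]=|G:T|$. The algebra $A$ generated by $E$ and $g$ is a quotient of the cyclic algebra $(E/K,\sigma,g^m)$, where $g^m\in T\subseteq E^\times$ and in fact $g^m\in K$ because $g$ commutes with $g^m$. A cyclic algebra over a finite field is split by Wedderburn's theorem, so $A$ is a quotient of $M_m(K)$, which is simple. Since $A$ acts faithfully, $A\cong M_m(K)$. Its unique irreducible module has $K$-dimension $m$, i.e. $E$-dimension $1$. Because $V$ is irreducible over $FG$, it is irreducible over $A$, and therefore $\dim_EV=1$.

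It remains to identify $V$ with $E$. I would fix $v\in V^\#$ and identify $V$ with $E$ via $v\,e\leftrightarrow e$. Each $g\in G$ maps $v$ to $v\,a_g$ for some $a_g\in E^\times$ and satisfies $(ve)g=(vg)e^{\sigma_g}$. Hence $g$ acts as $x\mapsto a_gx^{\sigma_g}$, so $G\le\Gamma(E)$. The elements of $G$ lying in $\Gamma_0(E)$ are exactly those with trivial $\sigma_g$, that is, those in $C_G(E)=T$. This gives $T=G\cap\Gamma_0(E)$.

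For the parenthetical case: if $V$ is quasiprimitive and $F(G)$ is cyclic, then $F(G)$ is abelian and $V_{F(G)}$ is homogeneous by the definition of quasiprimitivity, so the hypotheses are satisfied.
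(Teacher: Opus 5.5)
Your proof is correct. The paper does not argue this lemma itself; it only cites Manz--Wolf \cite[Theorem 2.1 and Lemma 2.2]{MW93}. Your proposal is therefore a self-contained replacement for that citation rather than a reproduction of an argument in the paper.

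The outline is the standard one:
\begin{itemize}
\item $E$ is the field generated by $T$ (Lemma~\ref{abelian}(a)).
\item $G/C_G(T)\hookrightarrow\Gal(E/F)$.
\item $C_G(T)=T$, because $C_G(F(G))\le F(G)$ for solvable $G$.
\item $\dim_EV=1$.
\item Each $g$ then acts as $x\mapsto a_gx^{\sigma_g}$.
\end{itemize}

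The only real work is the fourth step. You handle it by observing three things. First, $A=\langle E,g\rangle$ is exactly the image of $FG$, so $V$ is a simple $A$-module. Second, $A$ is a homomorphic image of the cyclic algebra $(E/K,\sigma,g^m)$, with $g^m\in K^\times$ because $g$ commutes with $g^m$. Third, that cyclic algebra is $M_m(K)$, since central simple algebras over finite fields split; its simple module has $K$-dimension $m=[E:K]$.

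One small correction: $A\cong M_m(K)$ holds because $A\neq0$ is a homomorphic image of a simple algebra. Faithfulness of the action is not the reason.

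If you prefer to avoid cyclic algebras, you can argue directly instead. The maps $1,g,\dots,g^{m-1}$ are $E$-linearly independent, since they are semilinear for the distinct automorphisms $\sigma^i$ (Dedekind's argument). This gives $\dim_FA=m[E:F]$ and $Z(A)=K$. Combined with simplicity of $A$ (it has a faithful simple module) and Wedderburn's theorem, you again get $A\cong M_m(K)$.

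Compared with the citation, your version makes explicit that solvability is used only through $C_G(F(G))\le F(G)$. Hence the conclusion holds for any self-centralizing abelian normal subgroup $T$ with $V_T$ homogeneous. The citation buys brevity.
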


\begin{proof} This follows immediately from \cite[Theorem 2.1 and Lemma 2.2]{MW93}.
\end{proof}

\begin{lemma}[Dixon \cite{Di67}]\label{Dixon}
Let $P$ be a permutation group of odd order on a set of $t\ge1$ elements. Then $|P|\le(\sqrt3)^{t-1}$.
\end{lemma}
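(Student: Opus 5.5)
The statement is Dixon's bound $|P|\le(\sqrt3)^{t-1}$ for a permutation group $P$ of odd order on $t\ge1$ points. I will argue by induction on $t$, splitting into the intransitive, imprimitive, and primitive cases.

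\textbf{Base and intransitive case.} For $t=1$ the group is trivial and there is nothing to prove. If $P$ is intransitive, let $\Omega=\Omega_1\cup\Omega_2$ be a splitting into nonempty $P$-invariant sets of sizes $t_1+t_2=t$. Then $P$ embeds into $P^{\Omega_1}\times P^{\Omega_2}$, and both factors have odd order. Induction gives
\[
|P|\le(\sqrt3)^{t_1-1}(\sqrt3)^{t_2-1}\le(\sqrt3)^{t-1}.
\]

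\textbf{Imprimitive case.} Suppose $P$ is transitive but imprimitive, with $k$ blocks of size $m$, where $t=km$ and $k,m>1$. Let $K$ be the kernel of the action on blocks. Then $K$ embeds into a product of $k$ groups of odd order, each acting on $m$ points, so induction gives $|K|\le(\sqrt3)^{k(m-1)}$. The quotient $P/K$ acts on the $k$ blocks, so $|P/K|\le(\sqrt3)^{k-1}$. Multiplying, $|P|\le(\sqrt3)^{km-1}=(\sqrt3)^{t-1}$.

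\textbf{Primitive case.} Here $P$ is solvable by Feit--Thompson and primitive, so $t=q^d$ for a prime $q$. Moreover $P=N\rtimes H$, where $N$ is elementary abelian of order $q^d$ and $H\le\GL(d,q)$ is irreducible of odd order. We need $q^d|H|\le 3^{(q^d-1)/2}$. The main obstacle is bounding $|H|$. One option is to use a known bound for solvable linear groups, for instance $|H|\le q^{cd}$, or to apply the same induction to $H$ acting on the nonzero vectors of $N$. The cleanest route is to note that $H$ has odd order and acts on the $q^d-1$ nonzero vectors. If $q$ is odd, $H$ commutes with $-1$, so $H$ acts on the $(q^d-1)/2$ pairs $\{\pm v\}$. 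Because $|H|$ is odd, the kernel of this action fixes each $v$ and is therefore trivial. Induction (valid since $(q^d-1)/2<t$) then gives $|H|\le(\sqrt3)^{(q^d-1)/2-1}$. It remains to check that $q^d(\sqrt3)^{(q^d-3)/2}\le(\sqrt3)^{q^d-1}$, that is, $q^d\le 3^{(q^d+1)/4}$. This holds for all $q^d\ge3$ and is elementary to verify: equality occurs at $q^d=3$, and it is easy for $q^d=5,7,9$ and beyond.

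If $q=2$, the pairing trick fails. Instead I apply induction directly to $H$ acting faithfully on $2^d-1$ points, which gives $|H|\le(\sqrt3)^{2^d-2}$. This yields only $2^d|H|\le 2^d(\sqrt3)^{2^d-2}$, which is too weak. So for $q=2$ a better estimate is needed. Plausibly one can use that odd-order subgroups of $\GL(d,2)$ are small, for example via the standard bound $|H|<2^{2.25d}$ for solvable irreducible groups, or $|H|\le (2^d-1)d$ in the semilinear case. With such an estimate the inequality $2^d|H|\le 3^{(2^d-1)/2}$ is checked directly for small $d$ and follows from growth for large $d$. This $q=2$ primitive subcase is where I expect the real work to lie.
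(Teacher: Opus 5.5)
The case $q=2$ that you leave open never occurs, so your induction is already complete in every case that arises. In the primitive case the regular normal subgroup $N\le P$ has order $q^d$, so $q$ divides the odd number $|P|$. Equivalently, a transitive group has degree $t=|P:P_\alpha|$ dividing $|P|$, so $t$ is odd. Hence $q$ is odd and your pairing argument covers the whole primitive case. The final inequality $t\le 3^{(t+1)/4}$ is then needed only for odd $t\ge3$, where it does hold. Your claim that it holds for all $q^d\ge3$ should be restricted to odd $q^d$, since it fails at $t=4$.

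Do not pursue the $q=2$ plan you sketch; it cannot succeed. The inequality $2^d|H|\le 3^{(2^d-1)/2}$ you would need is false in small cases:
\begin{itemize}
\item $d=2$, $H\cong C_3$: then $P\cong A_4$ and $|P|=12>3^{3/2}$;
\item $d=3$, $H\cong C_7$: then $|P|=56>3^{7/2}$.
\end{itemize}
These affine groups escape the lemma exactly because $|N|$ is even.

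With this one-line observation, your argument is a full proof, and it takes a different route from the paper. The paper does not prove this lemma at all; it simply quotes it from Dixon \cite{Di67}. Your proof is self-contained and elementary:
\begin{itemize}
\item the intransitive and imprimitive cases follow by multiplicativity over orbits and over a block system;
\item in the primitive case $P=N\rtimes H$, you let $H$ act on the $(t-1)/2$ pairs $\{\pm v\}$, so that the induction hypothesis applies to $H$ itself.
\end{itemize}
The faithfulness of that action on pairs is precisely the observation of Lemma~\ref{minus}, which the paper uses later for a different purpose. It spares you from invoking any general order bounds for solvable linear groups.
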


The existence of regular orbits of nilpotent groups is governed by the following result of Yang; for coprime actions, results of this type go back to Hargraves \cite{Ha81}.

\begin{theorem}[{\cite[Theorem 1.1]{Ya11}}]\label{Yang}
Let $N$ be a finite nilpotent group and let $V$ be a finite faithful $N$-module over a field of characteristic $s$ such that $O_s(N)=1$. Suppose that $N$ does not involve $C_2\wr C_2$ and, in case $s=2$, that $N$ does not involve $C_r\wr C_r$ for any Mersenne prime $r$. Then $N$ has a regular orbit on $V$.
\end{theorem}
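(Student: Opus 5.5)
The plan is to argue by induction on $|N|+\dim V$, reducing first to the irreducible case and then to the primitive case. Two remarks apply at every stage of the reduction. First, since $O_s(N)=1$ and $N$ is nilpotent, $N$ is an $s'$-group. Hence the action is coprime and $V$ is completely reducible by Maschke's theorem. Second, the hypotheses on $N$ (not involving $C_2\wr C_2$, nor $C_r\wr C_r$ for Mersenne $r$ when $s=2$) pass to all sections of $N$, so the induction hypothesis applies to every faithful action of a section of $N$ arising below.

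\emph{Reduction to the irreducible case.} Write $V=V_1\oplus V_2$ with $V_1$ irreducible. By induction, $N/C_N(V_1)$ has a regular orbit on $V_1$, say that of $v_1$. Then $C_N(v_1)=C_N(V_1)$, and $C_N(V_1)$ acts faithfully on $V_2$, because its kernel there centralizes all of $V$. Apply the induction hypothesis to $C_N(V_1)$ acting on $V_2$ to get $v_2$ with $C_{C_N(V_1)}(v_2)=1$. Then $C_N(v_1+v_2)=C_N(v_1)\cap C_N(v_2)=1$, in the same spirit as Lemma~\ref{product}.

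\emph{The primitive case.} Here every normal abelian subgroup of $N$ acts homogeneously, hence is cyclic. Since $N$ is nilpotent, each Sylow subgroup is then a central product of an extraspecial (or, for $q=2$, possibly a $Q_8$-type) group with a cyclic group, and $N$ is a central product of these. I would count fixed points. It suffices to show
\[
\sum_{x}|C_V(x)|<|V|,
\]
where $x$ runs over representatives of the subgroups of prime order of $N$. Each such $x$ is either central, in which case $C_V(x)=0$, or noncentral, in which case $\dim C_V(x)\le\dim V/q$ by the eigenspace structure of extraspecial groups. A direct estimate then works except for finitely many small configurations of type $E(q,m)$. These are exactly Passman's list \cite[Lemma 1.5(i)]{Pa69}, and each of them involves $D_8\cong C_2\wr C_2$ (or one checks directly that a regular orbit exists, e.g.\ $Q_8$ on $V(2,3)$ acts semiregularly on the nonzero vectors).

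\emph{The imprimitive case.} Write $V=W^N$ induced from a subgroup $H$, with $N$ permuting the $t$ blocks $W_1,\dots,W_t$ via the nilpotent group $P=N/\mathrm{core}_N(H)$. By induction, $H/C_H(W)$ has regular orbits on $W$; let $k$ be their number. I would choose $v=\sum w_i$ with each $w_i$ in a regular orbit of the block stabilizer, arranged so that the resulting \emph{colouring} of the $t$ blocks by orbit type is moved by every nontrivial element of $P$. The stabilizer of $v$ then fixes every block and acts regularly in each block, so it is trivial. This reduces the problem to a combinatorial statement: a nilpotent permutation group $P$ not involving the forbidden wreath products has a distinguishing colouring with $k$ colours. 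Here $k\ge2$ except in tiny cases, and the case $k=1$ combined with a transitive $C_r$ on $r$ blocks is precisely what produces $C_r\wr C_r$. One example is $C_{2^r-1}$, with $r$ a Mersenne prime, acting with a single regular orbit on $\GF(2^r)$ and then wreathed with $C_r$; another is $C_2$ on $V(1,3)$ wreathed with $C_2$.

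The main obstacle I expect is this distinguishing-partition step. The difficulty is pinning down exactly when only one regular orbit is available on a block, and showing that for $P$ a $q$-group the existence of a $k$-colouring fails only when $P$ involves $C_q\wr C_q$ with $k=1$ forcing $|W|-1=q$. That in turn forces either $q=2,s=3$ (giving $C_2\wr C_2$) or $s=2$ with $q$ a Mersenne prime. I would attack it by induction on a $P$-invariant block system, treating the Sylow subgroups of $P$ separately and then combining them using nilpotency.
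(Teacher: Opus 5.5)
The paper does not prove this statement; it is quoted from \cite[Theorem 1.1]{Ya11}, which goes back to Hargraves \cite{Ha81} in the coprime case. So your sketch has to stand on its own. Your preliminary steps are sound. Since $O_s(N)=1$, the nilpotent group $N$ is an $s'$-group, so $V$ is completely reducible. The reduction to irreducible $V$ via $C_N(v_1+v_2)=C_{C_N(V_1)}(v_2)$ is correct. In the primitive case all normal abelian subgroups are cyclic, and counting fixed points over subgroups of prime order is the standard, workable approach.

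The genuine gap is in the imprimitive case, which carries the real content of the theorem. You take every $w_i$ in a regular orbit of the full block stabilizer $\bar H=H/C_H(W)$, and then you need a distinguishing colouring of the blocks with $k$ colours. But $k=1$ is not confined to the forbidden configurations. It occurs whenever $\bar H$ is transitive on $W^\#$, for example $\bar H=\GF(s^a)^\times$ on $\GF(s^a)$, for every $s$.

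Here is a concrete case. Let $N=\langle -1\rangle\times(C_3\wr C_3)$ act monomially on $V=\GF(7)^3$, where $C_3=\{1,2,4\}\le\GF(7)^\times$.
\begin{itemize}
\item $N$ is nilpotent, its Sylow $2$-subgroup has order $2$, and $s=7$, so all hypotheses hold.
\item $V$ is irreducible. For any imprimitivity decomposition it is induced from a normal subgroup of index $3$ whose image on a block is $\bar H=\GF(7)^\times$. This group has a unique regular orbit on $W=\GF(7)$, while $P=C_3$ permutes the three axes regularly.
\item Hence no distinguishing $1$-colouring exists. Moreover $|W|-1=6\ne 3$, which contradicts your claim that $k=1$ forces $|W|-1=q$. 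The wreath product $C_3\wr C_3$ involved here is not forbidden, because $s\ne 2$.
\end{itemize}
Nevertheless $v=(1,1,3)$ has trivial stabilizer. Within the Sylow $3$-subgroup, the block stabilizer induces $C_3$ on each axis, and $C_3$ has two regular orbits there, the squares and the non-squares; this colouring distinguishes the blocks. The element $-1$ is a non-square, so it reverses the colour pattern and cannot fix $v$ either.

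So the missing idea is this: you must work with the Sylow subgroups of $N$ (or with $K=\mathrm{core}_N(H)$), not the full block stabilizer, because they have more regular orbits. You then have to produce a single vector that is regular for all Sylow subgroups at once. Regular vectors for different Sylow subgroups do not combine automatically. Your last sentence (``treating the Sylow subgroups of $P$ separately and then combining them using nilpotency'') does not say how to do this, and it is the substantive part of the theorem. Separately, even for $k\ge 2$, you only assert that a nilpotent permutation group not involving the forbidden wreath products has a distinguishing $2$-colouring. Gluck's theorem (Theorem~\ref{Gluck}) covers only groups of odd order.
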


\begin{lemma}\label{nilreg}
Let $N$ be a finite nilpotent group of odd order and let $V$ be a finite faithful completely reducible $N$-module, possibly of mixed characteristic. Suppose that $|V|$ is odd, or that $N$ does not involve $C_r\wr C_r$ for any Mersenne prime $r$. Then $N$ has a regular orbit on $V$.
\end{lemma}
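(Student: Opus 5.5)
We reduce Lemma~\ref{nilreg} to Theorem~\ref{Yang}, applied to each irreducible summand together with the appropriate quotient of $N$, and then glue the resulting vectors.

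Write $V=V_1\oplus\cdots\oplus V_t$, where $V_i$ is an irreducible $N$-module over a field of characteristic $s_i$. Put $K_i=C_N(V_i)$ and $N_i=N/K_i$, which acts faithfully and irreducibly on $V_i$. The first step is to verify the hypotheses of Theorem~\ref{Yang} for $(N_i,V_i)$.
\begin{itemize}
\item[(1)] $O_{s_i}(N_i)=1$. Indeed, $O_{s_i}(N_i)$ is a normal $s_i$-subgroup, so its fixed points on $V_i$ form a nonzero $N_i$-submodule. By irreducibility that submodule is all of $V_i$, and faithfulness forces $O_{s_i}(N_i)=1$.
\item[(2)] $N_i$ does not involve $C_2\wr C_2$, because $|N_i|$ is odd.
\item[(3)] Suppose $s_i=2$. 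If $|V|$ is odd, this case does not occur. Otherwise $N$ does not involve $C_r\wr C_r$ for any Mersenne prime $r$. Since $N_i$ is a quotient of $N$, anything involved in $N_i$ is involved in $N$, so $N_i$ does not involve $C_r\wr C_r$ either.
\end{itemize}
Theorem~\ref{Yang} therefore gives $v_i\in V_i$ with $C_N(v_i)=K_i$.

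The second step is gluing. One cannot simply set $v=\sum v_i$, since $C_N(v)=\bigcap_i C_N(v_i)$ but orbits compose less simply. The intersection formula does hold: the $V_i$ are $N$-invariant, so $C_N(\sum v_i)=\bigcap_i C_N(v_i)$. Hence
\[
C_N\Bigl(\sum_i v_i\Bigr)=\bigcap_i K_i=C_N(V)=1
\]
by faithfulness. So $v=\sum v_i$ lies in a regular orbit, and this step is in fact immediate.

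The only point needing care is that the hypotheses pass from $N$ to the quotients $N_i$ and to each characteristic separately. This covers the mixed-characteristic situation: when $|V|$ is odd, every $s_i$ is odd, so the Mersenne condition in Theorem~\ref{Yang} is vacuous.
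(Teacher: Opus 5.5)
Your proof is correct. It uses the same key input as the paper, Theorem~\ref{Yang}, but organizes the reduction differently.

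The paper groups the irreducible summands by characteristic, with $V_j$ the sum of all summands of characteristic $p_j$. It then works along the chain $N_j=C_N(V_1\oplus\cdots\oplus V_j)$ and applies Theorem~\ref{Yang} to the section $N_{j-1}/N_j$ acting faithfully on the possibly reducible module $V_j$. This requires Clifford's theorem to decompose $V_j$ into irreducible $N_{j-1}$-modules before checking $O_{p_j}(N_{j-1}/N_j)=1$. It then obtains $C_N(v_1+\cdots+v_j)=N_j$ by induction on $j$.

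You instead apply Theorem~\ref{Yang} to each irreducible summand separately, with the quotient $N/C_N(V_i)$. This has two advantages here:
\begin{itemize}
\item The condition $O_{s_i}(N/C_N(V_i))=1$ follows directly from irreducibility, so Clifford's theorem is not needed.
\item Each $v_i$ satisfies $C_N(v_i)=C_N(V_i)$ exactly, so the gluing is just $\bigcap_iC_N(V_i)=C_N(V)=1$.
\end{itemize}
The paper's chain argument is the natural device when one only controls stabilizers in successive sections. Here each irreducible summand already yields a regular orbit of the full quotient, so your version is simpler and fully sufficient.

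One cosmetic point: delete the sentence claiming one ``cannot simply set $v=\sum v_i$'', since you then correctly do exactly that.
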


\begin{proof}
Let $p_1,\ldots,p_t$ be the distinct primes occurring as characteristics of the irreducible summands of $V$, and write $V=V_1\oplus\cdots\oplus V_t$, where $V_j$ is the sum of the irreducible summands of characteristic $p_j$. Put $N_0=N$ and $N_j=C_N(V_1\oplus\cdots\oplus V_j)$ for $1\le j\le t$. Then $N_j\nsg N$, $N_t=1$, and $N_{j-1}/N_j$ acts faithfully on $V_j$.

Fix $j\in\{1,\ldots,t\}$ and let $R/N_j=O_{p_j}(N_{j-1}/N_j)$. By Clifford's theorem, $V_j$ is a direct sum of irreducible $N_{j-1}$-modules $W$, each of them defined over some finite field of characteristic $p_j$. As $R$ acts on $W$ as a $p_j$-group and $|W|$ is a power of $p_j$, we have $C_W(R)\ne0$, and since $R\nsg N_{j-1}$, the space $C_W(R)$ is an $N_{j-1}$-submodule of $W$. So $C_W(R)=W$. Thus $R$ acts trivially on $V_j$, i.e., $R=N_j$. As $|N|$ is odd, $N_{j-1}/N_j$ does not involve $C_2\wr C_2$, and if $p_j=2$, then $|V|$ is even and hence $N_{j-1}/N_j$ does not involve $C_r\wr C_r$ for any Mersenne prime $r$. We may therefore apply Theorem~\ref{Yang} to the nilpotent group $N_{j-1}/N_j$ acting faithfully on $V_j$, regarded as a vector space over $\GF(p_j)$, and we obtain $v_j\in V_j$ with $C_{N_{j-1}}(v_j)=N_j$.

Put $v=v_1+\cdots+v_t$. As all $V_j$ are $N$-invariant, we have $C_N(v_1+\cdots+v_j)=C_N(v_1+\cdots+v_{j-1})\cap C_N(v_j)$ for $j\ge2$, and induction on $j$ yields
\[
C_N(v_1+\cdots+v_j)=N_{j-1}\cap C_N(v_j)=C_{N_{j-1}}(v_j)=N_j
\]
for $1\le j\le t$. For $j=t$ we get $C_N(v)=N_t=1$, as desired.
\end{proof}

The next three results will only be used for modules of odd order in Section~\ref{sec:oddV}.

\begin{lemma}\label{minus}
Let $G$ be a group of odd order acting linearly on a finite vector space $V$ over a field of odd characteristic. Then $-v\notin v^G$ for every $v\in V^\#$. Consequently, there exists a $G$-invariant subset $S\subseteq V^\#$ such that $V^\#=S\cup(-S)$ and $S\cap(-S)=\emptyset$.
\end{lemma}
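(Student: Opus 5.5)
Suppose, for a contradiction, that $-v=v^g$ for some $v\in V^\#$ and $g\in G$. The plan is to apply $g$ repeatedly and use linearity. Since $g$ acts linearly, $v^{g^2}=(-v)^g=-(v^g)=v$, and by induction $v^{g^k}=(-1)^k v$ for every integer $k$. Put $n=o(g)$. As $|G|$ is odd, $n$ is odd, and therefore
\[
v=v^{g^n}=(-1)^n v=-v .
\]
Hence $2v=0$. The characteristic of the field is odd, so $2$ is invertible and $v=0$. This contradicts $v\in V^\#$, and so $-v\notin v^G$.

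For the consequence, first note that the map $v\mapsto -v$ commutes with the action of $G$. It therefore permutes the $G$-orbits on $V^\#$, and the first part shows that it fixes none of them. Since it is an involution, the orbits on $V^\#$ fall into pairs $\{\mathcal O,-\mathcal O\}$ with $\mathcal O\ne-\mathcal O$. From each pair we choose one orbit and let $S$ be the union of the chosen orbits. Then $S$ is $G$-invariant, $V^\#=S\cup(-S)$, and $S\cap(-S)=\emptyset$, because distinct orbits are disjoint.

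There is no real obstacle in this argument. The only points that need care are using that $g$ has odd order, which comes from $|G|$ being odd, and that $-v\ne v$ for $v\ne0$, which comes from the characteristic being odd.
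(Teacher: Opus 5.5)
Your proof is correct and follows essentially the same route as the paper. The paper deduces $vg^2=v$ and uses that $g$, having odd order, is a power of $g^2$, while you compute $v^{g^n}=(-1)^n v$ with $n=o(g)$ odd; this is the same parity argument, and your pairing of the orbits under $u\mapsto -u$ for the second part is identical to the paper's.
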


\begin{proof}
Suppose that $vg=-v$ for some $v\in V^\#$ and $g\in G$. Then $vg^2=v$. As $g$ has odd order, $g$ is a power of $g^2$, and thus $vg=v$. Hence $v=-v$, i.e., $2v=0$, which is impossible as $v\ne0$ and the characteristic is odd.

For the second assertion, note that the map $u\mapsto-u$ commutes with the action of $G$ and hence permutes the $G$-orbits on $V^\#$; by what we have just shown, it maps no orbit to itself. Therefore the $G$-orbits on $V^\#$ fall into pairs $\{O,-O\}$ with $O\ne-O$, and we may take $S$ to be the union of one orbit from each pair.
\end{proof}

\begin{lemma}[{\cite[Lemma 2.1]{Es91}}]\label{Espuelas}
Let $G$ be a finite group of odd order and let $V$ be a finite faithful irreducible quasiprimitive $G$-module of odd characteristic. If $F(G)$ is not cyclic, then $G$ has at least two regular orbits on $V$.
\end{lemma}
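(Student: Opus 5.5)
The statement is Espuelas' lemma, which the paper cites from \cite{Es91}. I will reconstruct the argument from the structure theory of quasiprimitive modules.

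\textbf{Step 1: structure of $F(G)$.} Since $|G|$ is odd, $G$ is solvable. Let $V$ be defined over $\GF(q)$ with $q$ odd. Quasiprimitivity makes $V_N$ homogeneous for every $N\nsg G$. As in \cite{MW93}, let $T$ be a maximal abelian normal subgroup of $G$ contained in $F(G)$. Every characteristic abelian subgroup of $F(G)$ is then cyclic and central in $F(G)$, by Lemma~\ref{abelian}(a) applied to the homogeneous module $V_A$. Hence for each prime $r$ the Sylow subgroup $O_r(G)$ is of symplectic type. Because $r$ is odd, $O_r(G)$ is a central product of an extraspecial group $E_r$ of exponent $r$ with a cyclic group.

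Set $Z=Z(F(G))$, which is cyclic, and $E=\prod_r E_r$. The hypothesis that $F(G)$ is not cyclic says exactly that some $E_r$ is nontrivial, say $|E_r/Z(E_r)|=r^{2m}$. Then $F(G)/Z$ is a faithful symplectic module for $G/F(G)$. Moreover, $V_{F(G)}$ is a multiple of a faithful irreducible module of dimension divisible by $\prod r^{m_r}$ over the field $K=\GF(q)[Z]$.

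\textbf{Step 2: count bad vectors.} For each subgroup $X\le G$ of prime order $s$, I would bound $|C_V(X)|$ and then show
\[
\sum_{X} |C_V(X)| < |V| - |G|,
\]
with the sum over all subgroups $X$ of prime order. This inequality leaves at least $|G|+1$ vectors with trivial stabilizer, i.e.\ more than one regular orbit.

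The bounds on $|C_V(X)|$ split into two cases.

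\emph{Case $X\le F(G)$, $X\not\le Z$.} An element of order $r$ in $E_r\setminus Z(E_r)$ acts on each irreducible constituent as a noncentral element of the extraspecial representation. Its eigenspaces are then equidimensional, so $|C_V(x)|\le|V|^{1/r}\le|V|^{1/3}$.

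\emph{Case $X\not\le F(G)$.} Here $x$ acts nontrivially on $F(G)/Z$. A standard argument bounds $\dim C_V(x)\le \dim V/2$, and better when $s\ge 3$ because $s$ is odd. For this I would use the fact that $x$ moves $F(G)$-eigenspaces or, in the semilinear part, acts as a field automorphism of odd order $s$. Either way $|C_V(x)|\le|V|^{1/s}$ or $|C_V(x)|\le|V|^{1/2}$.

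\textbf{Step 3: bound $|G|$ and compare.} The order of $G$ is controlled by
\[
|G|\le|Z|\,|F(G)/Z|\,|\mathrm{Sp}(F(G)/Z)|\,|\Gal(K)|,
\]
and the number of prime-order subgroups is at most $|G|$. Comparing $|G|^2$ with $|V|^{1/2}$ reduces everything to a small list of dimensions where this fails. In those residual cases I would compute directly in $G\le N_{\GL(V)}(E)$, essentially the $r^{1+2m}$-normalizer modulo scalars. The central examples are $r=3$ with small $m$ and $q$, such as $q=7$ or $q=13$, where the $3^{1+2}$ group acts on $V(3,q)$. For these, the oddness of $|G|$ forces $G/F(G)$ into a group of odd order inside $\mathrm{Sp}(2m,r)$, which is very small.

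\textbf{Main obstacle.} The hard part is the residual small cases. The counting is too crude there, and $G/F(G)\le\mathrm{Sp}(2,3)$ of odd order, i.e.\ a $C_3$, must be handled by explicit orbit computation. For such $q$ I would show explicitly that the number of vectors with trivial stabilizer is at least $2|G|$.

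Lemma~\ref{minus} may help to finish. Since $|G|$ is odd and $q$ is odd, if $v$ lies in a regular orbit then $-v$ lies in a different orbit, by Lemma~\ref{minus}. That orbit is also regular, because $C_G(-v)=C_G(v)$. So it suffices to produce a single regular orbit, which substantially weakens what Step 2 must show: it suffices that
\[
\sum_X|C_V(X)|<|V|.
\]
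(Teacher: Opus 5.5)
The paper does not prove this lemma; it quotes it from \cite{Es91}. So your argument has to stand on its own, and as written it is an outline whose decisive step is missing.

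Some pieces are sound:
\begin{itemize}
\item The closing remark is correct. If $C_G(v)=1$, then $C_G(-v)=1$ and $-v\notin v^G$ by Lemma~\ref{minus}, so one regular orbit yields two. It therefore suffices that the sets $C_V(X)$ do not cover $V$.
\item Step~1 is the standard structure of $F(G)$ for quasiprimitive modules.
\item For $x\in F(G)\setminus Z$ of prime order $r$, the bound $\dim C_V(x)\le\dim V/r$ is right.
\end{itemize}
One slip in Step~3: with the fixed-point bound $|V|^{1/2}$, the inequality you need is $|G|^2<|V|$, not a comparison of $|G|^2$ with $|V|^{1/2}$.

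The gap is the case $X\not\le F(G)$ in Step~2, which is where all the work lies. Your justification, that $x$ ``moves $F(G)$-eigenspaces'', is an imprimitivity argument. It is unavailable here, because $V_{F(G)}$ is homogeneous.

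If $x$ does not centralize $Z$, then $x$ is semilinear over $K$ with a field automorphism of order $s$, and $|C_V(x)|\le|V|^{1/s}$ does follow. But if $x\in C_G(Z)\setminus F(G)$, then $x$ is $K$-linear and acts nontrivially on $E/Z(E)$, and nothing in the proposal bounds $C_V(x)$. Indeed, $\dim C_V(x)\le\frac12\dim V$ fails for general elements normalizing $E$. Take $\omega\in\GF(q)$ a primitive cube root of unity, $a=\mathrm{diag}(1,\omega,\omega^2)$, and $b$ the cyclic permutation matrix. Then $d=\mathrm{diag}(1,1,\omega)$ normalizes $E=\langle a,b\rangle\cong 3^{1+2}$ (one has $d^{-1}bd=ab$), induces a transvection on $E/Z(E)$, and fixes a $2$-dimensional subspace of $V(3,q)$.

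Excluding such behaviour needs quasiprimitivity and the odd order of $G$ in a substantive way.
\begin{itemize}
\item The estimate $|\hat\chi(x)|^2=|C_{E/Z(E)}(x)|$, for the canonical extension $\hat\chi$ of the faithful character of $E$, handles elements of order $s\ne r$ that act nontrivially on $E_r/Z(E_r)$ and are prime to the characteristic. In that case $[E_r/Z(E_r),x]$ is nondegenerate of dimension at least $2$.
\item For $s=r$, you need to know which elements of order $r$ lie in odd-order subgroups of $\mathrm{Sp}(2m,r)$ that leave no isotropic subspace invariant.
\item When $s$ is the characteristic of $V$ (which is allowed here), $x$ is unipotent and a character count of $C_V(x)$ is not available.
\end{itemize}

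Step~3 is likewise only announced: the residual cases with $|G|^2\ge|V|$ are neither listed nor checked. In fact the configuration you single out does not arise. As $|G|$ is odd, $G$ centralizes $Z(E_3)$ and acts on $O_3(G)/Z(O_3(G))\cong E_3/Z(E_3)$ through an odd-order subgroup of $\mathrm{Sp}(2,3)$, which fixes a line. The preimage of that line is a noncyclic abelian normal subgroup of $G$, contradicting quasiprimitivity by Lemma~\ref{abelian}(a). So you have a reasonable strategy, not a proof.
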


\begin{theorem}[{\cite[Corollary 1]{Gl83}}]\label{Gluck}
Let $P$ be a permutation group of odd order on a finite set $\Omega$. Then there exists a subset $\Delta\subseteq\Omega$ whose setwise stabilizer in $P$ is trivial.
\end{theorem}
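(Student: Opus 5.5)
The statement to be proved is Gluck's theorem (Theorem~\ref{Gluck}): a permutation group $P$ of odd order on a finite set $\Omega$ has a subset $\Delta\subseteq\Omega$ with trivial setwise stabilizer. Equivalently, $P$ has a regular orbit on the power set $2^\Omega$, viewed as the permutation module $\GF(2)^\Omega$. I would argue by induction on $|\Omega|+|P|$, reducing in turn to transitive groups and then to primitive ones.

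\emph{Intransitive case.} Suppose $\Omega=\Omega_1\cup\Omega_2$ with both parts $P$-invariant and nonempty. Let $K=C_P(\Omega_1)$, the pointwise stabilizer of $\Omega_1$.
\begin{itemize}
\item By induction, $P/K$ acting on $\Omega_1$ has a subset $\Delta_1$ whose stabilizer in $P$ is exactly $K$.
\item The group $K$ acts faithfully on $\Omega_2$, so by induction there is $\Delta_2\subseteq\Omega_2$ with $\mathrm{Stab}_K(\Delta_2)=1$.
\item Set $\Delta=\Delta_1\cup\Delta_2$. Any $g$ stabilizing $\Delta$ stabilizes $\Delta\cap\Omega_1=\Delta_1$, hence $g\in K$, and then $g$ stabilizes $\Delta_2$, hence $g=1$.
\end{itemize}
This is the argument of Lemma~\ref{product} in combinatorial form.

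\emph{Imprimitive transitive case.} Let $\{B_1,\dots,B_t\}$ be a block system with $1<|B|<|\Omega|$. Let $H$ be the setwise stabilizer of $B_1$ and $K$ the kernel of the action on blocks.
\begin{itemize}
\item The quotient $P/K$ is an odd-order permutation group on $t$ points. By induction it has a distinguishing subset of blocks, and in fact I would try to use several pairwise non-equivalent distinguishing sets.
\item The idea is to choose subsets $D_i\subseteq B_i$ lying in distinct $H$-orbits of subsets (distinct "colours"), so that any element stabilizing $\Delta=\bigcup D_i$ must permute the blocks compatibly with a colouring that $P/K$ cannot preserve.
\end{itemize}
The standard method is to prove a stronger statement: the number of $P$-orbits of distinguishing subsets is large. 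A cleaner route is Seress's approach via primitive groups. There, imprimitive groups embed in $H^{B}\wr (P/K)$, and one proves that $(H^B\wr Q)$ has a regular orbit on subsets whenever each factor does, using the count of regular orbits to colour the blocks.

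\emph{Primitive case.} For primitive $P$ of odd order (solvable by Feit--Thompson), $P$ is affine: $P=V\rtimes P_0$ with $V=\GF(r)^n$ and $P_0$ an irreducible odd-order subgroup of $\GL(V)$. The order bound $|P|\le r^n\cdot|P_0|$ is small compared to $2^{|\Omega|}$, which suggests counting.
\begin{itemize}
\item A nontrivial $g\in P$ fixes at most $2^{c(g)}$ subsets, where $c(g)$ is the number of cycles of $g$.
\item Since $g$ has odd order at least $3$ and moves many points, $c(g)\le|\Omega|-2m(g)/3$, where $m(g)$ is the number of moved points. In a primitive group $m(g)\ge|\Omega|/2$ roughly.
\item If $\sum_{g\ne1}2^{c(g)}<2^{|\Omega|}$, a regular orbit exists. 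Together with $|P|\le(\sqrt3)^{|\Omega|-1}$ from Lemma~\ref{Dixon}, this works for $|\Omega|$ beyond a small bound; the small degrees would be checked directly.
\end{itemize}

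The main obstacle is the imprimitive step. The naive induction fails because a regular orbit on subsets for the block stabilizer does not by itself give one for the wreath product. One needs the strengthened inductive hypothesis: the number of regular orbits of $P$ on $2^\Omega$ is at least the number of colours required to distinguish $P/K$ on blocks. Here I would first try the hypothesis "at least $2$ regular orbits for $|\Omega|>1$", using complements $\Delta$ versus $\Omega\setminus\Delta$. These are never in the same orbit when $|\Omega|$ is odd, by the same parity trick as in Lemma~\ref{minus}. Then distinguishing $P/K$ needs only a 2-colouring of the blocks, which is exactly the induction hypothesis on $P/K$.
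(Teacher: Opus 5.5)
The paper does not prove this statement. It is quoted from Gluck's paper and used only as a black box in Proposition~\ref{imprim}, so the question is whether your argument is complete on its own.

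\textbf{The reductions.} Your intransitive reduction is correct. The imprimitive reduction also works once you commit to the idea in your last paragraph. If $D\subseteq B_1$ has trivial stabilizer in the group induced by $H$ on $B_1$, then so does $B_1\setminus D$, and $|D|\ne|B_1\setminus D|$ because $|B_1|$ is odd. Fix $t_i$ with $B_1t_i=B_i$, and take a set $\Lambda$ of blocks with trivial stabilizer in $P/K$. Let $\Delta$ meet $B_i$ in $Dt_i$ if $B_i\in\Lambda$, and in $(B_1\setminus D)t_i$ otherwise. An element $g$ stabilizing $\Delta$ preserves the sizes $|\Delta\cap B_i|$, hence preserves $\Lambda$, so $g\in K$. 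Then $t_igt_i^{-1}\in H$ stabilizes $D$ or its complement, so $g$ acts trivially on every block. Your intermediate claim that $H^B\wr Q$ has a regular orbit on subsets whenever both factors do is false in general (e.g.\ $S_2\wr S_2$), but you do not need it.

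\textbf{The gap: the primitive case.} The count you propose does not close with Dixon's bound. Write $N=|\Omega|$. Take the best cycle estimate available: a nontrivial affine map $x\mapsto xA+v$ fixes either no point or a coset of $C_V(A)$, hence at most $N/3$ points, so an element of prime order has at most $5N/9$ cycles. Combined with Dixon this gives $(\sqrt3)^{N-1}2^{5N/9}$. Since $\log_2\sqrt3+5/9\approx1.35>1$, the estimate exceeds $2^N$ for \emph{all} large $N$, not just small ones. Moreover, if $3$ divides $|P|$, no sharper cycle count can save it: elements of order $3$ have at least $N/3$ cycles, and $\log_2\sqrt3+1/3>1$.

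\textbf{How to repair it.} You need a subexponential bound on $|P|$, and the affine structure you mention already gives one: $|P|\le r^n|\GL(n,r)|<r^{n^2+n}$ with $N=r^n$. Every nontrivial stabilizer contains an element of prime order. So it suffices that $\sum 2^{c(g)}$, taken over elements of prime order, be less than $2^N$. This follows from $(n^2+n)\log_2 r<4N/9$, which holds unless $N\in\{3,5,7,9,11,13,17,19,25,27\}$.

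These degrees need a direct check, using that $P_0$ is irreducible of odd order:
\begin{itemize}
\item Degree $9$ does not occur: odd-order subgroups of $\GL(2,3)$ are $3$-groups and hence reducible.
\item In prime degree $r$, $P=C_r\rtimes C_m$ with $m$ odd dividing $r-1$.
\item For $N=25$, $P_0\cong C_3$.
\item For $N=27$, $P_0\le C_{13}\rtimes C_3$.
\end{itemize}
An exact count succeeds in each case; for instance, for $C_7\rtimes C_3$ one gets $6\cdot2+14\cdot2^3=124<2^7$. Alternatively, you can invoke the P\'alfy--Wolf polynomial bound on the order of solvable primitive groups.
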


\section{Nilpotent groups}\label{sec:nilpotent}

\begin{proposition}\label{nilpotent}
Let $p$ be an odd prime, let $G$ be a nonabelian nilpotent $p$-rough group, and let $V$ be a finite faithful completely reducible $G$-module, possibly of mixed characteristic. Then $\rho(G,V)\ge p$, i.e., $|G:G'|\le M(G,V)/p$.
\end{proposition}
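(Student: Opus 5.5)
Here $G$ is nilpotent of odd order and nonabelian, so $|G'|\ge p$ because $|G'|$ is a nontrivial product of primes $\ge p$. The plan therefore splits according to whether $G$ has a regular orbit on $V$.

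\emph{Case 1: $G$ has a regular orbit.} Then $M(G,V)=|G|$, and
\[
\rho(G,V)=|G|/|G:G'|=|G'|\ge p.
\]
By Lemma~\ref{nilreg}, this case covers every $V$ with $|V|$ odd, and also every $G$ that involves no $C_r\wr C_r$ with $r$ a Mersenne prime.

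\emph{Case 2: no regular orbit.} Now $|V|$ is even and $G$ involves $C_r\wr C_r$ for some Mersenne prime $r\ge p$. I would argue by induction on $|G|+\dim V$.

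\emph{Step 1: split by characteristic.} Write $V=V_{\mathrm{odd}}\oplus V_2$, separating the summands of odd and even characteristic. Put $N=C_G(V_{\mathrm{odd}})$; then $G/N$ acts faithfully on $V_{\mathrm{odd}}$ and $N$ acts faithfully on $V_2$. By Lemma~\ref{nilreg}, $G/N$ has a regular orbit on $V_{\mathrm{odd}}$. The argument of Lemma~\ref{product} then gives
\[
M(G,V)\ge |G:N|\cdot M(N,V_2),
\]
and Lemma~\ref{quot} gives
\[
|G:G'|\le |G/N:(G/N)'|\cdot|N:N'|.
\]
Hence
\[
\rho(G,V)\ge |(G/N)'|\cdot \rho(N,V_2),
\]
with $\rho(N,V_2)\ge1$ by Theorem~\ref{KY}.
\begin{itemize}
\item If $G/N$ is nonabelian, then $|(G/N)'|\ge p$ and we are done.
\item If $N$ is nonabelian and $N<G$, induction gives $\rho(N,V_2)\ge p$ and we are done.
\item If both $G/N$ and $N$ are abelian, a finer argument is needed; I would instead split $G$ itself.
\end{itemize}

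\emph{Step 2: reduce to a $p$-group on a characteristic-2 module.} Since $G=\prod_q G_q$ is the direct product of its Sylow subgroups, we have $G'=\prod_q G_q'$ and
\[
|G:G'|=\prod_q |G_q:G_q'|.
\]
I would aim to show
\[
M(G,V)\ge \prod_q M(G_q,V),
\]
that is, that there is one vector whose stabilizer in each $G_q$ is as small as possible. The natural route is via Lemma~\ref{product}-type arguments with respect to a decomposition $V=\bigoplus_q C_V(\prod_{q'\neq q}G_{q'})\oplus(\text{rest})$; for coprime pieces, stabilizers intersect nicely. Then any nonabelian Sylow $G_q$, with $q\ge p$, gives a factor $\rho(G_q,V)\ge q\ge p$, using $\rho(G_{q'},V)\ge1$ for the other primes. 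This reduces us to a nonabelian $q$-group $P$ of odd order acting faithfully and completely reducibly on a characteristic-2 module.

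\emph{Step 3: the $q$-group case (main obstacle).} Here I would use induction on $\dim V$ via Clifford theory.
\begin{itemize}
\item If $V$ is reducible, say $V=U\oplus W$, Lemma~\ref{product} combines the estimates for $P/C_P(U)$ on $U$ and for $C_P(U)$ on $W$. One needs that one of these pieces is nonabelian; otherwise the commutator must be captured "between" the summands, and this requires a direct count of orbits.
\item If $V$ is irreducible, then either $V$ is induced from a maximal subgroup $Q$ of index $q$, or it is primitive. In the primitive case $P$ is cyclic (odd $q$, with no characteristic-$q$ part since $V$ has characteristic $2$), contradicting nonabelian. In the induced case $V=W\oplus Wx\oplus\cdots$, and one counts: choose $w$ in distinct $Q$-orbits in the different coordinates to break the permutation action. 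Gluck's Theorem~\ref{Gluck} provides a set with trivial setwise stabilizer, which yields an orbit of size at least $|P|/|C|$ with $C$ small.
\end{itemize}
I expect the hard part to be the wreath-like configurations such as $C_3\wr C_3$ on $V(3,4)$, where $\rho$ equals exactly $3$. The bound must come from comparing $|P:P'|$ with $|P|/|C_P(v)|$ using $|P'|\ge q\,|C_P(v)|$, which I would verify by showing that $C_P(v)\cap P'$ has index at least $q$ in $P'$ for a well-chosen $v$.
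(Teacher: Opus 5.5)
Your proposal has a genuine gap, located where you say the main obstacle is. Step~3 never proves that a nonabelian $q$-group $P$ acting faithfully and completely reducibly in characteristic $2$ satisfies $\rho(P,V)\ge q$, and the tools you propose do not deliver it. In the paper, Gluck's theorem (Theorem~\ref{Gluck}) is only usable through the sign trick of Lemma~\ref{minus}, which needs odd characteristic. In characteristic $2$ the block stabilizer can be transitive on $W^\#$, as for the base group in $C_3\wr C_3$ on $V(3,4)$. Then there are no ``distinct $Q$-orbits'' to place in different coordinates. Encoding Gluck's set by zero versus nonzero coordinates instead loses the stabilizer of the zero coordinates in the kernel, and nothing in your sketch controls that loss. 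Since $\rho=3=q$ is attained exactly in Example~\ref{ex:wr}, you would need a sharp argument. Your final sentence only restates the target inequality $|P'|\ge q|C_P(v)|$.

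The missing idea is already in Section~2 and makes the Clifford analysis, the regular-orbit case split and the separation by characteristic unnecessary. By Theorem~\ref{KY} and Corollary~\ref{collapse} (which rests on the classification in Theorem~\ref{CK}), $M(P,V)>|P:P'|$ for every nonabelian group $P$ of odd order. If $P$ is a $q$-group, both numbers are powers of $q$, so $M(P,V)\ge q|P:P'|$ at once.

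The paper combines this with a splitting of the nilpotent group that you only assert in Step~2. Your route via $C_V(\prod_{q'\ne q}G_{q'})$ says nothing about the complementary part, which is typically all of $V$. For irreducible $V$, the paper passes to an absolutely irreducible module over an extension field $L$ without changing the permutation action. It writes $V=X\otimes_LY$, with $X$ faithful for a nonabelian Sylow $q$-subgroup $Q$ and $Y$ faithful for the Hall $q'$-subgroup $H$. It then uses pure tensors, for which coprimality gives $C_G(x\otimes y)=C_Q(x)\times C_H(y)$. Hence $M(G,V)\ge M(Q,X)\,M(H,Y)\ge q|Q:Q'|\cdot|H:H'|=q|G:G'|$.

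Your worry that in the reducible case the commutators might be ``captured between the summands'' is unfounded, and the same observation settles the unresolved third bullet of Step~1. Since $G$ embeds in $\prod_iG/C_G(V_i)$, some irreducible summand $V_1$ has nonabelian image $\bar G$. Lemmas~\ref{product} and~\ref{quot}, together with Theorem~\ref{KY} for $C_G(V_1)$ on the remaining summands, then give $\rho(G,V)\ge\rho(\bar G,V_1)$.
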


\begin{proof}
Put $M=M(G,V)$. We first assume that $V$ is an irreducible $G$-module over the finite field $F$. As $G$ is nilpotent and nonabelian, some Sylow $q$-subgroup $Q$ of $G$ is nonabelian, and $q\ge p$. Write $G=Q\times H$, where $H$ is the Hall $q'$-subgroup of $G$. By \cite[Lemma 10]{RT96}, there is a finite extension field $L$ of $F$ such that $V$ can be regarded as an absolutely irreducible $LG$-module without changing the permutation action of $G$ on $V$ (compare Step 2 of the proof of \cite[Theorem 3.1]{CK24b}).

Suppose first that $H\ne1$. Then by \cite[(3.16)]{As84} we have $V=X\otimes_LY$, where $X$ is a faithful irreducible $LQ$-module and $Y$ is a faithful irreducible $LH$-module. Let $x\in X^\#$ and $y\in Y^\#$, and let $u\in Q$ and $h\in H$ with $uh\in C_G(x\otimes y)$. By \cite[Lemma 3.3]{LS02} there are $\alpha,\beta\in L$ with $xu=\alpha x$, $yh=\beta y$ and $\alpha\beta=1$. As $\alpha^{o(u)}=1=\beta^{o(h)}$ and $o(u)$ and $o(h)$ are coprime, we get $\alpha=\beta=1$. Thus $C_G(x\otimes y)=C_Q(x)\times C_H(y)$ and $|(x\otimes y)^G|=|x^Q|\cdot|y^H|$. Now choose $x$ in an orbit of maximal size of $Q$ on $X$, and $y$ in an orbit of maximal size of $H$ on $Y$. By Theorem~\ref{KY} we have $|y^H|\ge|H:H'|$ and $|x^Q|\ge|Q:Q'|$, and by Corollary~\ref{collapse} the latter inequality is strict, as $Q$ is a nonabelian group of odd order. Since $|x^Q|$ and $|Q:Q'|$ are powers of $q$, we obtain $|x^Q|\ge q|Q:Q'|\ge p|Q:Q'|$, and therefore
\[
M\ge|(x\otimes y)^G|\ge p|Q:Q'|\cdot|H:H'|=p|G:G'|.
\]
If $H=1$, then $G=Q$, and the same argument with $X=V$ gives $M=M(Q,V)\ge q|Q:Q'|\ge p|G:G'|$.

Now let $V$ be arbitrary, and write $V=V_1\oplus\cdots\oplus V_t$ with irreducible $G$-modules $V_i$. As $G$ is nonabelian and isomorphic to a subgroup of the direct product of the groups $G/C_G(V_i)$, we may assume that $\bar G=G/C_G(V_1)$ is nonabelian. Put $C=C_G(V_1)$ and $W=V_2\oplus\cdots\oplus V_t$. By the irreducible case, applied to the nonabelian nilpotent $p$-rough group $\bar G$, we have $M(G,V_1)=M(\bar G,V_1)\ge p|\bar G:\bar G'|$. Moreover, $M(C,W)\ge|C:C'|$ by Theorem~\ref{KY}, as $W$ is a faithful completely reducible $C$-module by Clifford's theorem. By Lemmas~\ref{product} and~\ref{quot},
\[
M\ge M(G,V_1)\cdot M(C,W)\ge p|\bar G:\bar G'|\cdot|C:C'|\ge p|G:G'|,
\]
as wanted.
\end{proof}

\begin{proposition}\label{Gprime}
Let $p$ be an odd prime and let $G$ be a $p$-rough group with $|G'|=p$. Then $G$ is nilpotent, and $G$ has a regular orbit on every finite faithful completely reducible $G$-module $V$, possibly of mixed characteristic. In particular, $\rho(G,V)=p$ for all such $V$.
\end{proposition}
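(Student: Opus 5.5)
Nilpotency is immediate from Lemma~\ref{comm}(a). Since $|G'|=p$ is odd and every prime divisor of $|G|$ is at least $p\ge3$, the group $G$ has odd order. The plan is to get a regular orbit from Lemma~\ref{nilreg}, which then gives $M(G,V)=|G|=p|G:G'|$, so $\rho(G,V)=p$.

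Lemma~\ref{nilreg} applies immediately if $|V|$ is odd. In general we must check that $G$ does not involve $C_r\wr C_r$ for any Mersenne prime $r$. This is the only real point, and it follows from commutator sizes. Suppose some section $H/K$ of $G$, with $K\nsg H\le G$, is isomorphic to $C_r\wr C_r$. Then $(H/K)'=H'K/K$ is a quotient of $H'\le G'$, so $|(C_r\wr C_r)'|$ divides $|G'|=p$. Now $C_r\wr C_r=B\rtimes C_r$ with base group $B=C_r^r$. Its derived subgroup contains $[B,C_r]$, which is the augmentation submodule of the regular $\GF(r)C_r$-module and has order $r^{r-1}$. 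Since $r\ge3$, we get $r^{r-1}\ge r^2>r$. Moreover, $r$ divides $|G|$, so $r\ge p$, and hence $|(C_r\wr C_r)'|\ge r^2>p$. This is a contradiction.

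So Lemma~\ref{nilreg} yields $v\in V$ with $C_G(v)=1$. Then $M(G,V)=|G|$, and $|G:G'|=|G|/p=M(G,V)/p$, i.e.\ $\rho(G,V)=p$. The main (mild) obstacle is the exclusion of $C_r\wr C_r$ in characteristic $2$. The computation of $|[B,C_r]|$ is routine: the fixed points of $C_r$ on the regular module are one-dimensional, so $[B,C_r]$ has codimension $1$.
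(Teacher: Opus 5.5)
Your proposal is correct and follows essentially the same route as the paper's proof: nilpotency from Lemma~\ref{comm}(a), then exclusion of a section $C_r\wr C_r$ by comparing $|[B,s]|=r^{r-1}>p$ with $|G'|=p$, and finally Lemma~\ref{nilreg} to get a regular orbit, so that $M(G,V)=|G|$ and $\rho(G,V)=p$. The only differences are cosmetic: the paper rules out $C_r\wr C_r$ for all odd primes $r$ rather than only Mersenne ones, and it uses equality $(C_r\wr C_r)'=[B,s]$ where you only need the containment.
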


\begin{proof}
By Lemma~\ref{comm}(a), $G$ is nilpotent. Suppose that $G$ involves $C_r\wr C_r$ for some odd prime $r$, i.e., that there are subgroups $N\nsg U\le G$ with $U/N\cong C_r\wr C_r$. Then $r\ge p$. On the other hand, $(U/N)'=U'N/N$ is a quotient of $U'\le G'$, and hence $|(U/N)'|\le p$. But the derived subgroup of $C_r\wr C_r=B\rtimes\langle s\rangle$, where $B\cong C_r^r$ is the base group, is $[B,s]$, which has order $r^{r-1}\ge p^{p-1}>p$. This contradiction shows that $G$ does not involve $C_r\wr C_r$ for any odd prime $r$. So $G$ has a regular orbit on $V$ by Lemma~\ref{nilreg}, whence $M(G,V)=|G|$ and $\rho(G,V)=|G'|=p$.
\end{proof}

\section{Semilinear groups}\label{sec:semilinear}

In this section we treat the case that $F(G)$ is abelian and $V_{F(G)}$ is homogeneous, so that $G$ is a semilinear group by Lemma~\ref{semilinear}. We need two elementary lemmas.

\begin{lemma}\label{arith}
Let $A$ be a cyclic group of odd order, let $\alpha$ be an automorphism of $A$ of odd order $k$, and let $e=|[A,\alpha]|$. Then one of the following holds.
\begin{itemize}
\item[(a)] For every prime $\ell$ dividing $|A|$, $\alpha$ induces an automorphism of $\ell$-power order on the Sylow $\ell$-subgroup of $A$.
\item[(b)] $2k<e$.
\end{itemize}
\end{lemma}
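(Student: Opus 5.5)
Assume (a) fails; we show $2k<e$. Since $A$ is cyclic, write $A=\prod_\ell A_\ell$ with $A_\ell$ its Sylow subgroups. Each $A_\ell$ is characteristic, so $\alpha$ acts on it, and $[A,\alpha]=\prod_\ell[A_\ell,\alpha]$, giving $e=\prod_\ell|[A_\ell,\alpha]|$. If (a) fails, there is a prime $\ell$ for which the automorphism $\alpha_\ell$ induced on $A_\ell\cong C_{\ell^a}$ does not have $\ell$-power order. Because $\Aut(C_{\ell^a})\cong C_{\ell-1}\times C_{\ell^{a-1}}$ for odd $\ell$ is cyclic, the $\ell'$-part $\beta$ of $\alpha_\ell$ is nontrivial and has odd order $r>1$ with $r\mid\ell-1$. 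Thus $r\mid k$, and $r\mid\ell-1$ with $r$ and $\ell$ odd gives $\ell\ge2r+1$.

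Next I bound $|[A_\ell,\alpha]|$ from below. A nontrivial $\ell'$-automorphism acting on the cyclic $\ell$-group $A_\ell$ is fixed-point-free on it, since its image in $\Aut(\Omega_1(A_\ell))$ is nontrivial and coprime action then forces $C_{A_\ell}(\beta)=1$. Hence $[A_\ell,\beta]=A_\ell$. As $\beta$ is a power of $\alpha_\ell$, we have $[A_\ell,\beta]\le[A_\ell,\alpha]$, so $[A_\ell,\alpha]=A_\ell$. It follows that $e\ge|A_\ell|=\ell^a\ge\ell$. The remaining task is to compare this with $k$.

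The main obstacle is that $k$ is the order of $\alpha$ on all of $A$, not just on $A_\ell$. So $k=\mathrm{lcm}_m k_m$, where $k_m$ is the order of $\alpha$ on $A_m$, and each factor has to be controlled by the corresponding $|[A_m,\alpha]|$. For each prime $m$, write $|A_m|=m^{b}$. The order $k_m$ divides $|\Aut(A_m)|=(m-1)m^{b-1}$. When $\alpha$ acts trivially on $A_m$ we get $k_m=1$.

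When $\alpha$ acts nontrivially on $A_m$, I would bound $k_m$ in terms of $c_m=|[A_m,\alpha]|$. If the $m'$-part of $\alpha$ on $A_m$ is nontrivial, the argument above gives $c_m=m^b$, and then $k_m\le(m-1)m^{b-1}<c_m$. If instead $\alpha$ acts on $A_m$ as a nontrivial $m$-element, it has the form $x\mapsto x^{1+m^j}$ with $j\ge1$. Then $[A_m,\alpha]=A_m^{m^j}$ has order $m^{b-j}$, while $k_m=m^{b-j}$, so $k_m=c_m$.

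Putting these together gives $k\le\prod_m k_m\le\prod_{m\ne\ell}c_m\cdot k_\ell$. For the prime $\ell$ itself we have the sharper estimate $k_\ell\le(\ell-1)\ell^{a-1}/1$. Since $r\mid\ell-1$ with $r\ge3$, and more simply since the $\ell'$-part of $k_\ell$ divides $\ell-1$ and is odd while $\ell-1$ is even, we get $k_\ell\le\frac{\ell-1}{2}\ell^{a-1}<\frac{c_\ell}{2}$. Therefore $2k\le2k_\ell\prod_{m\ne\ell}c_m<c_\ell\prod_{m\ne\ell}c_m=e$, which is (b). The point needing care is exactly this factor $2$, and it comes from the oddness of $k$.
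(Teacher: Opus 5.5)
Your proof is correct and is essentially the paper's argument carried out prime by prime. The paper notes that the power map inducing $\alpha$ lies in the subgroup $U$ of $(\Z/|A|\Z)^\times$ of residues congruent to $1$ modulo $|A|/e$, and uses $k\mid|U|=e\prod_{\ell}(\ell-1)/\ell$ (product over the primes $\ell$ at which $\alpha$ is not of $\ell$-power order) with $|U|$ even; you instead bound each local order by $k_m\le c_m$, use $k=\mathrm{lcm}_m k_m\le\prod_m k_m$, and get the factor $2$ from the oddness of $k_\ell$.

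One harmless slip: a nontrivial automorphism of $m$-power order of $A_m$ has the form $x\mapsto x^{1+m^jt}$ with $m\nmid t$, not necessarily $x\mapsto x^{1+m^j}$. It generates the same cyclic group, so $c_m=k_m=m^{b-j}$ still holds.
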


\begin{proof}
Let $n=|A|$, let $a_0$ be a generator of $A$, and let $m$ be an integer with $a_0^\alpha=a_0^m$. Then $a^\alpha=a^m$ for all $a\in A$, and $k$ is the order of $m+n\Z$ in $(\Z/n\Z)^\times$. Let $h=\gcd(n,m-1)$. As $A$ is abelian, $a\mapsto a^{-1}a^\alpha=a^{m-1}$ is an endomorphism of $A$ with image $[A,\alpha]=\langle a_0^{m-1}\rangle$, so that $e=n/h$. Write $n=\prod_\ell\ell^{a_\ell}$ and $h=\prod_\ell\ell^{b_\ell}$ with $0\le b_\ell\le a_\ell$, where $\ell$ runs over the prime divisors of $n$. The residue class of $m$ lies in the subgroup
\[
U=\{u\in(\Z/n\Z)^\times\mid u\equiv1\pmod h\}
\]
of $(\Z/n\Z)^\times$, and so $k$ divides $|U|$. By the Chinese remainder theorem, $U\cong\prod_\ell U_\ell$ with $U_\ell=\{u\in(\Z/\ell^{a_\ell}\Z)^\times\mid u\equiv1\pmod{\ell^{b_\ell}}\}$, where $|U_\ell|=\ell^{a_\ell-b_\ell}$ if $b_\ell\ge1$, and $|U_\ell|=\ell^{a_\ell-1}(\ell-1)$ if $b_\ell=0$. Since $e=\prod_\ell\ell^{a_\ell-b_\ell}$, we obtain
\[
|U|=e\cdot\prod_{\ell\mid n,\ \ell\nmid h}\frac{\ell-1}{\ell}.
\]
If every prime divisor $\ell$ of $n$ divides $h$, then $m\equiv1\pmod\ell$ for all these $\ell$. As the kernel of the natural map $(\Z/\ell^{a_\ell}\Z)^\times\to(\Z/\ell\Z)^\times$ has order $\ell^{a_\ell-1}$, it follows that $\alpha$, which acts on the Sylow $\ell$-subgroup of $A$ by raising to the $m$-th power, induces an automorphism of $\ell$-power order on it; so (a) holds. Otherwise there is a prime divisor $\ell$ of $n$ not dividing $h$; then $|U|<e$, and $|U|$ is even because $\ell-1$ is even. As $k$ is odd and divides $|U|$, we get $k\le|U|/2<e/2$, which is (b).
\end{proof}

\begin{lemma}\label{nilcrit}
Let $G=A\langle g\rangle$ be a finite group, where $A$ is a cyclic normal subgroup of $G$. Suppose that for every prime $\ell$ dividing $|A|$, the element $g$ induces an automorphism of $\ell$-power order on the Sylow $\ell$-subgroup $A_\ell$ of $A$. Then $G$ is nilpotent.
\end{lemma}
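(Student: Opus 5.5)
The plan is to show that every Sylow subgroup of $G$ is normal. Write $A=\prod_\ell A_\ell$, the product of its Sylow subgroups; each $A_\ell$ is characteristic in $A$ and hence normal in $G$. Let $\pi$ be the set of primes dividing $|A|$, and factor $g=\prod_r g_r$ into commuting $r$-parts, where each $g_r$ is a power of $g$. By hypothesis, for each $\ell\in\pi$ the automorphism of $A_\ell$ induced by $g$ has $\ell$-power order. Hence for every prime $r\ne\ell$ the $r$-part $g_r$ induces a power of an $\ell$-element on $A_\ell$ whose order is a power of $r$. That induced automorphism is therefore trivial, so $g_r$ centralizes $A_\ell$.

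Next fix a prime $r$ and put $P_r=A_r\langle g_r\rangle$, where $A_r=1$ if $r\notin\pi$. This is a subgroup because $A_r\nsg G$. It is an $r$-group, since $A_r$ is a normal $r$-subgroup and $g_r$ is an $r$-element. By the previous step, $g_r$ centralizes $A_\ell$ for all $\ell\ne r$, and $A_\ell$ and $A_r$ commute because $A$ is abelian. Also every $g_s$ is a power of $g$, so all the $g_s$ commute with one another. It follows that $P_r$ and $P_s$ commute elementwise for $r\ne s$. The subgroups $P_r$ for distinct primes therefore generate their direct product $P=\prod_r P_r$. This product contains $A=\prod A_\ell$ and $g=\prod g_r$, so $P=G$. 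Since $G$ is a direct product of groups of prime-power order, it is nilpotent.

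The main obstacle is the bookkeeping in the last step. One subtlety is that the generators may overlap, for example $g_r^{o}$ may lie in $A_r$. This does no harm: $G$ is generated by pairwise commuting $r$-subgroups $P_r$ for distinct primes, and such a product is always direct with each $P_r$ its Sylow $r$-subgroup. The other point is that the hypothesis concerns only the automorphism induced by $g$, not the order of $g$ itself. This is handled by working with $g_r$, a power of $g$, since the automorphism induced by $g_r$ is a power of the one induced by $g$.
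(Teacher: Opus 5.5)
Your proof is correct. Its first step matches the paper's, since both rest on the fact that an element of order prime to $\ell$ centralizes $A_\ell$. After that the routes diverge.

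The paper gets this centralizing property for all $\ell'$-elements of $G$ at once, without decomposing $g$. Since $A\le C_G(A_\ell)$, the quotient $G/C_G(A_\ell)$ is cyclic, generated by the image of $g$, and is therefore an $\ell$-group. The paper then takes an arbitrary $S\in\Syl_q(G)$ and observes $SA=S\times A_{q'}$. It concludes $S\nsg G$ because $SA\nsg G$ (as $G/A$ is abelian) and $S$ is the unique Sylow $q$-subgroup of $SA$.

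You instead split $g$ into its primary parts $g_r$ and apply the centralizing argument only to these. You then build the Sylow subgroups explicitly as $P_r=A_r\langle g_r\rangle$, check on generators that they commute pairwise, and use coprimality of orders to see that their product is direct and equals $G$.

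Your version is more constructive, since it names the Sylow subgroups and the direct decomposition. The cost is the bookkeeping with primary parts and the coprime-product argument. The paper's version is a little shorter and treats Sylow subgroups abstractly. Both are equally elementary.
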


\begin{proof}
Let $\ell$ be a prime divisor of $|A|$. Since $A$ is abelian, $A\le C_G(A_\ell)$, so $G/C_G(A_\ell)$ is generated by $gC_G(A_\ell)$ and is isomorphic to the group of automorphisms of $A_\ell$ generated by the one induced by $g$. Hence $G/C_G(A_\ell)$ is an $\ell$-group, and every $\ell'$-element of $G$ centralizes $A_\ell$. Now let $q$ be a prime and $S\in\Syl_q(G)$. Then $S$ centralizes $A_\ell$ for every prime $\ell\ne q$, i.e., $S$ centralizes the Hall $q'$-subgroup $A_{q'}$ of $A$. Moreover, $A_q$ is a normal $q$-subgroup of $G$, and so $A_q\le S$. Therefore $SA=SA_{q'}=S\times A_{q'}$. As $G/A$ is abelian, $SA\nsg G$, and as $S$ is the unique Sylow $q$-subgroup of $SA$, it follows that $S\nsg G$. Thus every Sylow subgroup of $G$ is normal, and $G$ is nilpotent.
\end{proof}

Next we record the properties of the exceptional groups $D_1$ and $D_2$ defined in the introduction.

\begin{lemma}\label{exc}
Let $i\in\{1,2\}$. Then $D_i'$ is cyclic of order $7$, and every $u\in U_i^\#$ satisfies $|C_{D_i}(u)|=3$. In particular, $M(D_i,U_i)=|D_i|/3$ and
\[
|D_i:D_i'|=\frac37M(D_i,U_i).
\]
Moreover, $D_i$ is generated by the stabilizers $C_{D_i}(u)$ with $u\in U_i^\#$, and $D_2\cong\Gamma(2^3)\times C_3$.
\end{lemma}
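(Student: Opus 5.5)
We verify everything by direct computation in the semilinear groups, beginning with $D_1=\Gamma(2^3)=\GF(8)^\times\rtimes\langle\sigma\rangle$, where $\sigma\colon x\mapsto x^2$ has order $3$ and $|D_1|=21$. Since $\GF(8)^\times\cong C_7$ and $\sigma$ acts on it by squaring, which is fixed-point-free on $C_7^\#$, we get $[\GF(8)^\times,\sigma]=\GF(8)^\times$. As $D_1/\GF(8)^\times$ is abelian, $D_1'=C_7$. Next, for $u\in U_1^\#$ we count stabilizers. The elements of $\Gamma_0$ act semiregularly on $U_1^\#$, so $C_{D_1}(u)\cap\GF(8)^\times=1$ and $|C_{D_1}(u)|\le3$. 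On the other hand, $|U_1^\#|=7<21$, so every orbit on $U_1^\#$ has size at most $7$. Since $D_1\ge\GF(8)^\times$ is transitive on $U_1^\#$, there is a single orbit of size $7$, and hence $|C_{D_1}(u)|=3$ for every $u\ne0$. Consequently $M(D_1,U_1)=7=|D_1|/3$, and $|D_1:D_1'|=3=\tfrac37\cdot7$. The stabilizers are Sylow $3$-subgroups. They are not normal, since the group is not nilpotent (as $D_1'=C_7$ and $\sigma$ acts nontrivially). They therefore generate a normal subgroup of order divisible by $3$ and greater than $3$, which must be all of $D_1$.

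For $D_2=A_0\langle\sigma_0\rangle$ with $|A_0|=21$, $\sigma_0\colon x\mapsto x^4$ of order $3$ on $\GF(64)$, we have $|D_2|=63$. Write $A_0=B\times Z$ with $B=\GF(8)^\times$ of order $7$ and $Z=\GF(4)^\times$ of order $3$. Then $\sigma_0$ fixes $\GF(4)$ pointwise, because $x^4=x$ there, so $\sigma_0$ centralizes $Z$. On $B$ it acts by $b\mapsto b^4$, nontrivially. Hence $Z$ is central, $D_2=(B\langle\sigma_0\rangle)\times Z$, and $B\langle\sigma_0\rangle\cong\Gamma(2^3)$, since $\sigma_0|_{\GF(8)}\colon x\mapsto x^4$ generates $\Gal(\GF(8))$. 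Thus $D_2\cong\Gamma(2^3)\times C_3$ and $D_2'=[B,\sigma_0]=B\cong C_7$. Under the tensor identification $U_2=\GF(8)\otimes\GF(4)$ claimed in the introduction, this matches, but we only need the direct count.

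For the stabilizers, $A_0$ acts semiregularly on $U_2^\#$, so $|C_{D_2}(u)|\le3$ and each orbit has size at least $21$. Now $|U_2^\#|=63$, so there are exactly $3$ orbits of $A_0$. The cyclic group $D_2/A_0$ of order $3$ permutes these orbits, and we must show that each $D_2$-orbit has size $21$, equivalently that $\sigma_0$ fixes each $A_0$-orbit. The $A_0$-orbits are the cosets of the index-$3$ subgroup $A_0$ in $\GF(64)^\times$. Since $\sigma_0(x)/x=x^3\in A_0$, because $(x^3)^{21}=x^{63}=1$, the map $\sigma_0$ preserves each coset. Hence every $D_2$-orbit has size $21$, and every $u\ne0$ has $|C_{D_2}(u)|=3$. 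It follows that $M=21=|D_2|/3$ and $|D_2:D_2'|=9=\tfrac37\cdot21$.

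It remains to show that the stabilizers generate $D_2$; this is the step needing care. The stabilizers have order $3$ and meet $A_0$ trivially, so each maps onto $D_2/A_0$. We compute explicitly: $C_{D_2}(u)$ is generated by $x\mapsto a x^{4}$ with $a=u^{-3}$, that is, $a\,u^4=u$. As $u$ ranges over $\GF(64)^\times$, the elements $a=u^{-3}$ range over all of $A_0$. The subgroup generated therefore contains $(a\sigma_0)(\sigma_0)^{-1}$-type quotients, and in particular contains all of $A_0$. Together with one stabilizer it gives $D_2$. The same argument, using $a=u^{-1}$ in $\GF(8)$, gives the claim for $D_1$ as well.
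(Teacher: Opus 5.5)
Your proof is correct and follows essentially the same route as the paper. For $D_2$ you use the same splitting $D_2=(B\langle\sigma_0\rangle)\times Z$, the same coset argument $x^{\sigma_0}=x\cdot x^3\in xA_0$ for the orbits, and the same explicit stabilizers $y\mapsto u^{-3}y^4$ combined with $\sigma_0^{-1}$ for generation; the only variation is that for $D_1$ you obtain generation from the normal closure of a non-normal Sylow $3$-subgroup, rather than the paper's count of seven Sylow $3$-subgroups each fixing exactly one nonzero vector, and both arguments work.
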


\begin{proof}
For $i=1$ we have $D_1=\Gamma_0(8)\rtimes\langle x\mapsto x^2\rangle\cong C_7\rtimes C_3$, where the automorphism $x\mapsto x^2$ acts fixed point freely on $\Gamma_0(8)\cong C_7$. Hence $D_1'=\Gamma_0(8)$ and $|D_1:D_1'|=3$. Since $\Gamma_0(8)$ is regular on $U_1^\#$ and $|D_1|=21$, every $u\in U_1^\#$ has a stabilizer of order $3$, which is a Sylow $3$-subgroup of $D_1$. As $D_1$ has seven Sylow $3$-subgroups, each of which fixes exactly one vector in $U_1^\#$ (the fixed field of $x\mapsto x^2$ being $\GF(2)$), all Sylow $3$-subgroups occur as stabilizers, and they generate $D_1$.

For $i=2$, the automorphism $\sigma_0\colon x\mapsto x^4$ of $\GF(64)$ has order $3$ and fixed field $\GF(4)$. Write $A_0=Z\times Q$ with $Z=\GF(4)^\times$ of order $3$ and $Q$ of order $7$. Then $\sigma_0$ centralizes $Z$ and acts on $Q\le\GF(8)^\times$ by $a\mapsto a^4$, i.e., fixed point freely. Hence $D_2=Z\times(Q\rtimes\langle\sigma_0\rangle)$, where $Q\rtimes\langle\sigma_0\rangle\cong\Gamma(2^3)$ (note that $\sigma_0$ restricts to a generator of $\Gal(\GF(8))$), and $D_2'=Q$ has order $7$, so $|D_2:D_2'|=9$. Moreover, the multiplication map $\GF(8)\otimes_{\GF(2)}\GF(4)\to\GF(64)$ is a ring isomorphism, under which $\sigma_0$ corresponds to $(x\mapsto x^4)\otimes\mathrm{id}$, multiplication by $a\in Q$ to $a\otimes\mathrm{id}$ and multiplication by $z\in Z$ to $\mathrm{id}\otimes z$; this yields the description of $(D_2,U_2)$ given in the introduction. As $\GF(64)^\times/A_0$ has order $3$, we have $x^3\in A_0$ and thus $x^{\sigma_0}=x\cdot x^3\in xA_0$ for all $x\in\GF(64)^\times$. Hence the three orbits $xA_0$ of $A_0$ on $U_2^\#$ are $\sigma_0$-invariant, and $D_2$ has exactly three orbits on $U_2^\#$, each of size $21=|D_2|/3$. For $x\in\GF(64)^\times$, the map $y\mapsto x^{-3}y^4$ lies in $D_2$ and fixes $x$, so it generates $C_{D_2}(x)$. Since $x\mapsto x^{-3}$ maps $\GF(64)^\times$ onto $A_0$, the stabilizers of the vectors in $U_2^\#$ are exactly the subgroups generated by the maps $y\mapsto ay^4$ with $a\in A_0$. These subgroups generate $D_2$: they contain $\sigma_0$, and for $a\in A_0$ the map $y\mapsto ay^4$ followed by $\sigma_0^{-1}\colon y\mapsto y^{16}$ is the multiplication $y\mapsto a^{16}y$, where $a^{16}$ runs through $A_0$ as $a$ does.

In both cases we have $M(D_i,U_i)=|D_i|/3$, and hence
\[
|D_i:D_i'|=\frac{|D_i|}{7}=\frac37M(D_i,U_i).
\]
\end{proof}

We now come to the main result of this section.

\begin{proposition}\label{semi}
Let $p$ be an odd prime, let $G$ be a non-nilpotent $p$-rough group, and let $V$ be a finite faithful irreducible $G$-module. Suppose that $F(G)$ is abelian and that $V_{F(G)}$ is homogeneous.
\begin{itemize}
\item[(a)] If $V$ has odd characteristic, then there exists $v\in V$ with $|C_G(v)|\le\frac3{13}|G'|$, and if $p\ge5$, then there exists $v\in V$ with $|C_G(v)|<\frac1{2p}|G'|$. Moreover, if $|C_G(v)|\ge\frac3{13}|G'|$ for all $v\in V$, then $|V|=27$ and $|G|=39$.
\item[(b)] If $V$ has characteristic $2$, then $\rho(G,V)\ge p$, unless $(G,V)$ is isomorphic to $(D_1,U_1)$ or to $(D_2,U_2)$, in which case $p=3$ and $\rho(G,V)=7/3$.
\end{itemize}
In particular, $\rho(G,V)\ge p$ unless $(G,V)$ is isomorphic to $(D_1,U_1)$ or to $(D_2,U_2)$.
\end{proposition}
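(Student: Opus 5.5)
By Lemma~\ref{semilinear} I identify $V$ with a field $E$ so that $G\le\Gamma(E)$ and $T=F(G)=G\cap\Gamma_0(E)$ is cyclic. Then $G/T$ embeds in $\Gal(E)$, so it is cyclic, say $G=T\langle g\rangle$ with $g\colon x\mapsto ax^{\sigma}$. Let $\alpha$ be the automorphism of $T$ induced by $g$, let $k>1$ be the order of $G/C_G(T)=G/T$, and put $e=|[T,\alpha]|$. Then $[T,\alpha]\le G'$, and in fact $G'=[T,g]$ because $G/T$ is cyclic; hence $|G'|=e$. If alternative (a) of Lemma~\ref{arith} held, Lemma~\ref{nilcrit} would make $G$ nilpotent, which is false. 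So alternative (b) holds, and $e>2k$.

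The next step is to bound stabilizers. Since $T\le\Gamma_0(E)$ acts semiregularly on $E^\#$, every $v\in V^\#$ has $C_G(v)\cap T=1$. Hence $C_G(v)$ embeds in $G/T$, and $|C_G(v)|\le k$. This gives $|C_G(v)|\le k<e/2=|G'|/2$ for every nonzero $v$, so $\rho(G,V)=|C_G(v)|^{-1}|G|/|G:G'|\ge e/k>2$. The plan is to sharpen this in two ways. First, I use that $k$ is odd and divides $|U|=e\prod(\ell-1)/\ell$ with $U$ as in Lemma~\ref{arith}. Second, I choose $v$ with a stabilizer smaller than $k$. The elements of $G\setminus T$ fixing some $v$ are semilinear maps $x\mapsto bx^{\tau}$ with $\tau\ne1$. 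The number of $v$ fixed by some nontrivial element of $G/T$ is at most $\sum_{1\ne h\le G/T\text{ prime order}}|T|\cdot|\mathrm{Fix}_E(\tau_h)|$, and this is typically much smaller than $|E|$, giving a $v$ with $C_G(v)=1$. In that case $|C_G(v)|=1\le 3|G'|/13$ trivially, once $|G'|\ge 2p+1\ge7$ by Lemma~\ref{comm}(b).

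For part (a) (odd characteristic), I split into two cases. If $C_G(v)=1$ for some $v$, I am done, since $|G'|\ge 2p+1>2p$ and $|G'|\ge 7>13/3$. Otherwise no regular orbit exists, and the counting inequality above fails. This forces $|E|$ to be small relative to $|T|k$, which leaves only few configurations. I will enumerate them: $E=\GF(q^n)$ with $k\mid n$, $|T|$ dividing $q^n-1$ and large compared with $q^{n}/q^{n/r}$. The expected survivor is $|V|=27$ with $T=C_{13}$, $k=3$, $|G|=39$, $G'=T$. There $|C_G(v)|=3=3|G'|/13$, which explains the constant and the "moreover" clause. For $p\ge5$ the same enumeration should show that a regular orbit always exists, or else $k<e/(2p)$ via the parity and divisibility argument ($k$ odd, all prime factors of $k$ at least $p$, and $k$ dividing $|U|<e$ with $|U|$ even). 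This enumeration is the main obstacle. What I would try first is to bound $|T|\le q^n-1$ and $|\mathrm{Fix}(\tau)|\le q^{n/r}$, reduce to $n=r$ prime (or $n$ small), and check the remaining small cases with $q\in\{3,5,7\}$ by hand.

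For part (b) (characteristic $2$), I run the same counting argument. When a regular orbit exists, $\rho(G,V)=|G'|\ge 2p+1>p$. When none exists, the counting forces small $E$ of characteristic $2$. Here I again enumerate, and I expect to find exactly $\GF(8)$ with $G=\Gamma(2^3)$ and $\GF(64)$ with $G=D_2$; for these, Lemma~\ref{exc} gives $\rho=7/3$. In the remaining small cases (for example $\GF(2^5)$ with $\Gamma(2^5)$) I expect to verify $|C_G(v)|\le k$ and $e/k\ge p$ directly, e.g.\ $31/5\cdot$ gives $\rho\ge31/5>5$. The final sentence of the proposition then follows from (a), which gives $|C_G(v)|<|G'|/p$ and hence $\rho>p$, together with (b).
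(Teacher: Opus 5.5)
\textbf{There is a genuine gap in the main step.} Your preliminary steps are correct and coincide with the paper's: identifying $V$ with $E$, $G'=[T,g]$, $e=|G'|>2k$ via Lemmas~\ref{arith} and~\ref{nilcrit}, and $C_G(v)\cap T=1$. The problem is your case split into ``$G$ has a regular orbit'' versus ``no regular orbit''. You expect the second case to leave only finitely many small configurations to enumerate, and it does not. For every odd prime $\ell$, $\Gamma(2^\ell)$ satisfies all hypotheses with $p=\ell$, and it has no regular orbit since $|\Gamma(2^\ell)|=\ell(2^\ell-1)>|V^\#|$ (Example~\ref{ex:gamma}). Similarly, $T\langle x\mapsto x^3\rangle\le\Gamma(3^\ell)$ with $|T|=(3^\ell-1)/2$ is an infinite odd-characteristic family without regular orbits; it generalizes Example~\ref{ex:13}.

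The underlying reason is that your counting inequality only bounds $|T|$ from below, and $|T|$ may be as large as $|E|-1$. It tells you nothing about $e=|G'|$ unless you also control $|T|/e=|C_T(g)|$. Likewise, the parity/divisibility argument behind Lemma~\ref{arith} only ever gives $2k<e$, never $k<e/(2p)$: in Example~\ref{ex:SG} one has $e/k=(2p+1)/p<3$.

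\textbf{What is missing is a uniform lower bound for $e/k$.} The paper proceeds as follows.

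\emph{Split on stabilizer size.} Split according to whether some $v\ne0$ has $|C_G(v)|<k$. If so, $|C_G(v)|$ is a proper divisor of $k$, so $|C_G(v)|\le k/p<e/(2p)<3e/13$, and you are done. Otherwise every $C_G(v)$ with $v\ne0$ is a complement of $T$.

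\emph{Count complements.} Let $E_0$ be the fixed field of $\sigma$ and $r=|E_0|$. Each complement is generated by a unique element $x\mapsto c'x^\sigma$ of $gT$, so there are at most $|T|$ complements. Each fixes either $0$ or exactly $r-1$ nonzero vectors, and no two share a nonzero fixed vector. Hence $r^k-1\le|T|(r-1)$.

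\emph{Bound $d=|C_T(g)|$.} Use $C_T(g)\le E_0^\times$. Then $|T|=de$ with $d\mid r-1$, and in odd characteristic $d\le(r-1)/2$ because $d$ is odd. This gives
\[
\frac ek\ge\frac{2(r^k-1)}{k(r-1)^2}\ \text{(odd characteristic)},\qquad \frac ek\ge\frac{r^k-1}{k(r-1)^2}\ \text{(characteristic }2\text{)}.
\]

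\emph{Elementary estimates.} These two bounds yield all the claims without any enumeration:
\begin{itemize}
\item in odd characteristic, $e/k\ge13/3$, with equality only for $r=k=3$, which gives $|V|=27$ and $|G|=39$;
\item $e/k>2p$ when $k\ge p\ge5$;
\item in characteristic $2$, $e/k<p$ is possible only for $k=3$, $r\in\{2,4\}$ and $e=7$, which leads to $D_1$ and $D_2$.
\end{itemize}
The factor $2$ coming from $d\le(r-1)/2$ is exactly what separates $13/3$ from $7/3$. Without a bound on $d$, your approach cannot produce the constant $3/13$.
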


\begin{proof}
By Lemma~\ref{semilinear} we may assume that $V=E$ for a finite field $E$, that $G\le\Gamma(E)$, and that $A:=F(G)=G\cap\Gamma_0(E)$. We identify $A$ with a subgroup of $E^\times$, an element $a\in A$ acting as $x\mapsto ax$. The map $\Gamma(E)\to\Gal(E)$ sending $x\mapsto bx^\tau$ to $\tau$ is a homomorphism with kernel $\Gamma_0(E)$, and so $G/A$ is isomorphic to a subgroup of the cyclic group $\Gal(E)$. Choose $g\in G$ with $G=A\langle g\rangle$, and let $c\in E^\times$ and $\sigma\in\Gal(E)$ be such that $g$ acts as $x\mapsto cx^\sigma$. Then $\sigma$ has order $k:=|G:A|$. Let $E_0$ be the fixed field of $\sigma$ and $r=|E_0|$, so that $|E|=r^k$. As $G$ is $p$-rough and not abelian, $k>1$ and every prime divisor of $k$ is at least $p$; in particular, $k$ is odd and $k\ge p$.

\smallskip\noindent\emph{(1) For $v\in V^\#$ we have $C_G(v)\cap A=1$, and $|C_G(v)|$ divides $k$.} Indeed, a nontrivial element of $\Gamma_0(E)$ has no nonzero fixed point, so $C_G(v)\cap A=1$ and $C_G(v)\cong C_G(v)A/A\le G/A$.

\smallskip\noindent\emph{(2) Let $e=|G'|$ and $d=|C_A(g)|$. Then $G'=[A,g]$, $|A|=de$, and $d\le r-1$; if $V$ has odd characteristic, then $d\le(r-1)/2$.} As $A$ is abelian, $a\mapsto a^{-1}a^g$ is a homomorphism from $A$ onto $[A,g]$ with kernel $C_A(g)$, so that $[A,g]$ is a subgroup of $A$ of order $|A|/d$. It is normalized by $g$, since $(a^{-1}a^g)^g=(a^g)^{-1}(a^g)^g$, and hence $[A,g]\nsg G$. Modulo $[A,g]$ the element $g$ centralizes $A$, so $G/[A,g]$ is abelian. Thus $G'\le[A,g]\le G'$, whence $G'=[A,g]$ and $|A|=de$. For $a\in A$ and $x\in E$, if we write the action of $\Gamma(E)$ on $(E,+)$ from the right, we have $x(ag)=ca^\sigma x^\sigma$ and $x(ga)=acx^\sigma$, so $a$ commutes with $g$ if and only if $a\in E_0$. Hence $C_A(g)\le E_0^\times$, and so $d$ divides $r-1$. If the characteristic is odd, then $r-1$ is even while $d$ is odd, and so $d\le(r-1)/2$.

\smallskip\noindent\emph{(3) $2k<e$.} Since $G$ is solvable, $C_G(A)=C_G(F(G))\le F(G)=A$ by \cite[III.4.2]{Hu67}. So $G/A$ acts faithfully on $A$, and the automorphism $\alpha$ of $A$ induced by $g$ has order $k$. By (2), $|[A,\alpha]|=|[A,g]|=e$. If $\alpha$ induced an automorphism of $\ell$-power order on every Sylow $\ell$-subgroup of $A$, then $G$ would be nilpotent by Lemma~\ref{nilcrit}. Hence Lemma~\ref{arith} yields $2k<e$.

\smallskip\noindent\emph{(4) If there exists $v\in V^\#$ with $|C_G(v)|<k$, then $|C_G(v)|<\frac1{2p}e\le\frac3{13}e$, and $\rho(G,V)>2p$. In particular, all assertions hold in this case.} Indeed, by (1), $|C_G(v)|$ then is a proper divisor of $k$, and as every prime divisor of $k$ is at least $p$, we get $|C_G(v)|\le k/p<e/(2p)$ by (3). Moreover, $1/(2p)\le1/6<3/13$, and $\rho(G,V)\ge|G:C_G(v)|/|G:G'|=e/|C_G(v)|>2p$.

\smallskip\noindent\emph{(5) If $|C_G(v)|=k$ for all $v\in V^\#$, then $\rho(G,V)=e/k$ and $r^k-1\le de(r-1)$.} The first assertion is clear, as then $M(G,V)=|G|/k$. By (1), $C_G(v)$ is a complement of $A$ in $G$ for every $v\in V^\#$. Let $\mathcal X$ be the set of all complements of $A$ in $G$. For $X\in\mathcal X$, the natural map $X\to G/A$ is an isomorphism, so $X$ contains a unique element $x_X$ with $x_XA=gA$, and $X=\langle x_X\rangle$. Hence $X\mapsto x_X$ is an injective map from $\mathcal X$ into $gA$, and $|\mathcal X|\le|A|$. Moreover, $x_X=ga$ for some $a\in A$, so that $x_X$ acts as $x\mapsto c'x^\sigma$ for some $c'\in E^\times$. If $y_0\in C_E(X)^\#$ and $y\in C_E(X)$, then $c'y_0^\sigma=y_0$ and $c'y^\sigma=y$, whence $(y/y_0)^\sigma=y/y_0$, i.e., $y\in y_0E_0$. Therefore $|C_E(X)^\#|\in\{0,r-1\}$. If $X,Y\in\mathcal X$ are distinct and $y\in C_E(X)\cap C_E(Y)$ is nonzero, then $\langle X,Y\rangle\le C_G(y)$, which is impossible as $|\langle X,Y\rangle|>|X|=k=|C_G(y)|$. Since every $y\in E^\#$ lies in $C_E(C_G(y))$ and $C_G(y)\in\mathcal X$, we conclude that $E^\#$ is the disjoint union of the sets $C_E(X)^\#$ with $X\in\mathcal X$. Consequently $r^k-1\le|\mathcal X|(r-1)\le|A|(r-1)=de(r-1)$.

\smallskip\noindent\emph{(6) Proof of (a).} By (4) we may assume that $|C_G(v)|=k$ for all $v\in V^\#$. Now $r\ge3$ is odd, and (5) and (2) yield
\[
\frac ek\ge\varphi(r,k):=\frac{2(r^k-1)}{k(r-1)^2}=\frac{2(1+r+\cdots+r^{k-1})}{k(r-1)}.
\]
If $k=3$, then
\[
\varphi(r,3)-\frac{13}{3}=\frac{2(r^2+r+1)-13(r-1)}{3(r-1)}=\frac{(2r-5)(r-3)}{3(r-1)}\ge0,
\]
with equality if and only if $r=3$. If $k\ge5$, then
\[
\varphi(r,k)>\frac{2r^{k-1}}{k(r-1)}>\frac{2r^{k-2}}{k}\ge\frac{2\cdot3^{k-2}}{k}\ge\frac{54}{5}>\frac{13}{3},
\]
where we have used that $3^{x-2}/x$ is increasing for $x\ge2$. If moreover $p\ge5$, then $k\ge p$, and the same estimate yields $\varphi(r,k)>2\cdot3^{p-2}/p>2p$, since $3^{x-2}>x^2$ for all integers $x\ge5$. In any case $e/k\ge13/3$, and if $p\ge5$ then $e/k>2p$ (note that $k\ge5$ if $p\ge5$). Since $|C_G(v)|=k$ for all $v\in V^\#$, the first two assertions of (a) follow. If moreover $|C_G(v)|\ge3e/13$ for all $v\in V$, then $e/k=13/3=\varphi(r,k)$, which forces $k=r=3$ and $e=13$. Then $|V|=|E|=r^k=27$, $d\le(r-1)/2=1$, and $|G|=k|A|=kde=39$.

\smallskip\noindent\emph{(7) Proof of (b).} Let $V$ have characteristic $2$. By (4) we may assume that $|C_G(v)|=k$ for all $v\in V^\#$. Then $r\ge2$ is a power of $2$, $d\le r-1$, and (5) yields
\[
\rho(G,V)=\frac ek\ge\psi(r,k):=\frac{r^k-1}{k(r-1)^2}=\frac{1+r+\cdots+r^{k-1}}{k(r-1)}.
\]
Suppose first that $k\ge5$, and put $m=\max\{5,p\}$, so that $k\ge m$. If $r=2$, then $\psi(r,k)=(2^k-1)/k$, and if $r\ge4$, then $\psi(r,k)>r^{k-1}/(k(r-1))>r^{k-2}/k\ge4^{k-2}/k$. As $(2^x-1)/x$ and $4^{x-2}/x$ are increasing for $x\ge1$, we get $\rho(G,V)\ge\min\{(2^m-1)/m,\,4^{m-2}/m\}$. For $p=3$ this is $31/5>3$, and for $p\ge5$ it exceeds $p$, since $2^x-1>x^2$ and $4^{x-2}>x^2$ for all integers $x\ge5$. So $\rho(G,V)>p$ if $k\ge5$.

Now let $k=3$. Then $p=3$, as $p\le k$. Here
\[
\psi(r,3)-3=\frac{r^2-8r+10}{3(r-1)},
\]
which is positive for $r\ge8$. Hence $\rho(G,V)\ge3$ unless $r\in\{2,4\}$. In this remaining case suppose that $\rho(G,V)<3$. Then $e<3k=9$, and as $e>2k=6$ by (3) and $e$ is odd, we get $e=7$ and $\rho(G,V)=7/3$. By (5) we then have $r^3-1\le7d(r-1)$, i.e., $d\ge(r^2+r+1)/7$, and together with $d\le r-1$ this yields $d=1$ if $r=2$, and $d=3$ if $r=4$.

If $r=2$, then $|E|=8$ and $|G|=k|A|=kde=21=|\Gamma(E)|$, so $G=\Gamma(E)$ and $(G,V)$ is isomorphic to $(D_1,U_1)$.

If $r=4$, then $|E|=64$, $|A|=21$, so $A=A_0$ is the subgroup of order $21$ of $E^\times$, and $\sigma\in\{\sigma_0,\sigma_0^2\}$, as these are the automorphisms of order $3$ of $E$. As in the proof of Lemma~\ref{exc}, we have $x^\sigma\in xA$ for all $x\in E^\times$, so that $g$ maps the orbit $xA$ of $A$ onto $cxA$. If $c\notin A$, then $g$ permutes the three orbits of $A$ on $E^\#$ transitively, so $G$ is transitive on $E^\#$ and, as $|G|=63=|E^\#|$, has a regular orbit, contradicting $|C_G(v)|=k$ for all $v\in E^\#$. Hence $c\in A$, and so $G=A\langle\sigma\rangle=A_0\langle\sigma_0\rangle=D_2$ and $(G,V)=(D_2,U_2)$.

Conversely, in both exceptional cases $p=3$ and $\rho(G,V)=7/3$ by Lemma~\ref{exc}. This proves (b).

Finally, if $V$ has odd characteristic, then (a) yields $\rho(G,V)\ge13/3>3$ if $p=3$, and $\rho(G,V)>2p$ if $p\ge5$. Together with (b), this proves the last assertion.
\end{proof}

\section{Invariant regular orbits of the Fitting subgroup}\label{sec:invariant}

We now treat the case that every regular orbit of the Fitting subgroup is invariant under $G$, but the restriction of $V$ to $F(G)$ is not homogeneous. We need the following computation in finite fields.

\begin{lemma}\label{galois}
Let $p$ be an odd prime, let $E$ be a finite field of characteristic $2$, and let $\sigma$ be a nontrivial automorphism of $E$ whose order $o$ has only prime divisors $\ge p$. Let $\sigma$ act on the cyclic group $E^\times$. Then
\[
|[[E^\times,\sigma],\sigma]|\ge\gamma_p\,o,\qquad\text{where}\quad\gamma_p=\frac{2^p-1}{p}.
\]
\end{lemma}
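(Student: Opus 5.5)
Write $|E|=r^o$, where $E_0$ is the fixed field of $\sigma$ and $r=|E_0|$ is a power of $2$. The plan is to compute both commutator subgroups explicitly. Since $E^\times$ is cyclic and $\sigma$ acts as $x\mapsto x^r$, the map $x\mapsto x^{-1}x^\sigma=x^{r-1}$ is an endomorphism of $E^\times$. Its kernel is $E_0^\times$, so $|[E^\times,\sigma]|=(r^o-1)/(r-1)$. Applying the same map again, $[[E^\times,\sigma],\sigma]$ is the image of $x\mapsto x^{(r-1)^2}$. With $n=r^o-1$, its order is therefore
\[
N:=\frac{n}{\gcd(n,(r-1)^2)}.
\]
Write $n=(r-1)s$ with $s=1+r+\cdots+r^{o-1}\equiv o\pmod{r-1}$. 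Then $\gcd(n,(r-1)^2)=(r-1)\gcd(s,r-1)=(r-1)\gcd(o,r-1)$, and hence
\[
N=\frac{s}{\gcd(o,r-1)}\ge\frac{r^o-1}{(r-1)\,o}.
\]

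The next step is to show that this lower bound is at least $\gamma_p\,o=(2^p-1)o/p$, which reduces to the inequality
\[
\frac{r^o-1}{r-1}\ \ge\ \frac{2^p-1}{p}\,o^2 .
\]
If $r=2$, the left side is $2^o-1$. We have $o\ge p$ because every prime divisor of $o$ is at least $p$ and $o>1$. The function $(2^x-1)/x^2$ is increasing for $x\ge 3$, so $(2^o-1)/o^2\ge(2^p-1)/p^2$. This gives $2^o-1\ge(2^p-1)o^2/p^2$, which is only weaker by a factor $p$ than what we need. So the crude bound $\gcd(o,r-1)\le o$ must be sharpened: for $r=2$ we have $\gcd(o,1)=1$, hence $N=2^o-1$. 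We then need $2^o-1\ge(2^p-1)o/p$, which holds because $(2^x-1)/x$ is increasing.

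For $r\ge4$, use $\gcd(o,r-1)\le o$ and $(r^o-1)/(r-1)>r^{o-1}\ge 4^{o-1}$. It then suffices to check $4^{o-1}\ge(2^p-1)o^2/p$ for $o\ge p\ge3$. At $o=p$ this reads $4^{p-1}p\ge(2^p-1)p^2$, i.e.\ $4^{p-1}\ge(2^p-1)p$. This holds for $p=3$ (we get $16\ge 21$? no) — so the case $o=p=3$ with $r\ge4$ needs separate care, and I expect these small cases to be the main obstacle.

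For the small cases I would use the exact formula $N=s/\gcd(o,r-1)$ instead of the crude bound. The $\gcd$ can exceed $1$ only when the primes dividing $o$ (all $\ge p$) divide $r-1$. For $o=3$, $p=3$ we need $N\ge 7$. Here $s=r^2+r+1$ and $\gcd(3,r-1)\le3$, so $N\ge(r^2+r+1)/3\ge 7$ for $r\ge4$. In general $\gcd(o,r-1)\le o$ together with $s\ge r^{o-1}+\cdots\ge 4^{o-1}$ handles all $o\ge5$, since $4^{o-1}/o^2\ge 4^{o-1}/o^2$ grows fast. I will verify $4^{o-1}\ge\gamma_p o^2$ for $o\ge\max(p,5)$ using $\gamma_p\le 2^p/p\le 2^o/p$, which reduces it to $2^{o-2}\cdot p\ge o^2$, true for $o\ge5$ (and $p\ge3$). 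Together with the direct check for $o=3$, this completes the argument.
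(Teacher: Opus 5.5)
Your strategy is the paper's: the exact formula $N=s/\gcd(o,r-1)$ and the case $r=2$ are handled the same way. However, the case $r\ge4$ contains a false step. For $o\ge5$ you use $\gamma_p\le 2^o/p$ to reduce $4^{o-1}\ge\gamma_p o^2$ to $2^{o-2}p\ge o^2$, and you claim this for all $o\ge5$, $p\ge3$. At $o=5$, $p=3$, which is an admissible pair, it reads $24\ge25$ and fails. So the case $r\ge4$, $o=5$, $p=3$ is not covered as written. The target inequality does hold there, since $4^4=256\ge\frac73\cdot25$. You can repair the step by this direct check. Alternatively, use $\gamma_p<2^p/p\le2^o/o$ (as $2^x/x$ is increasing for $x\ge2$), which reduces matters to $2^{o-2}\ge o$, valid for all $o\ge4$. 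The paper avoids both this slip and your separate $o=3$ case by using the sharper estimate $s\ge(4^o-1)/3$ instead of $s>4^{o-1}$. It then gets $N\ge\frac{4^o-1}{3o^2}\,o\ge\frac{4^p-1}{3p^2}\,o=\gamma_p\frac{2^p+1}{3p}\,o\ge\gamma_p\,o$ for all admissible $o$ at once. This chain is sharp at $r=4$, $o=p=3$, where $N=7=3\gamma_3$.

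A second, smaller omission: you take $\sigma$ to be $x\mapsto x^r$. In general $\sigma$ is only some generator $x\mapsto x^{r^j}$, with $\gcd(j,o)=1$, of $\Gal(E/E_0)$. You must check that $|[[E^\times,\sigma],\sigma]|$ depends only on $\langle\sigma\rangle$. The paper does this by noting $[C,\beta^i]\le[C,\beta]$, so that $[[E^\times,\sigma],\sigma]=[[E^\times,\varphi],\varphi]$ for $\varphi\colon x\mapsto x^r$. Alternatively, for every $\sigma$-invariant subgroup $D$ of the cyclic group $E^\times$ one has $|[D,\sigma]|=|D|/|C_D(\sigma)|$ with $C_D(\sigma)=D\cap E_0^\times=C_D(\varphi)$. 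Since subgroups of a cyclic group are determined by their orders, this gives the same subgroups.
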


\begin{proof}
Let $E_0$ be the fixed field of $\sigma$ and $r=|E_0|\ge2$, so that $|E|=r^o$ and $\langle\sigma\rangle=\Gal(E/E_0)$ is generated by $\varphi\colon x\mapsto x^r$. For every automorphism $\beta$ of an abelian group $C$ and every $i\ge1$ we have $c^{-1}c^{\beta^i}=\prod_{j=0}^{i-1}(c^{\beta^j})^{-1}(c^{\beta^j})^{\beta}$, so that $[C,\beta^i]\le[C,\beta]$; as $\sigma$ and $\varphi$ generate the same group, it follows that $[E^\times,\sigma]=[E^\times,\varphi]$ and $[[E^\times,\sigma],\sigma]=[[E^\times,\varphi],\varphi]$. Now put $C=E^\times$, a cyclic group of order $r^o-1$. Then $[C,\varphi]=\{c^{r-1}\mid c\in C\}$ and $[[C,\varphi],\varphi]=\{c^{(r-1)^2}\mid c\in C\}$, which has order $(r^o-1)/\gcd(r^o-1,(r-1)^2)$. Let $s=(r^o-1)/(r-1)=1+r+\cdots+r^{o-1}$. Then $\gcd(r^o-1,(r-1)^2)=(r-1)\gcd(s,r-1)$, and as $s\equiv o\pmod{r-1}$, we obtain
\[
|[[C,\varphi],\varphi]|=\frac{s}{\gcd(o,r-1)}.
\]
Note that $o\ge p$. If $r=2$, this equals $2^o-1\ge\gamma_p\,o$, as $(2^x-1)/x$ is increasing for $x\ge1$. If $r\ge4$, then $s\ge(4^o-1)/3$ and $\gcd(o,r-1)\le o$, so that
\[
\frac{s}{\gcd(o,r-1)}\ge\frac{4^o-1}{3o^2}\cdot o\ge\frac{4^p-1}{3p^2}\cdot o\ge\gamma_p\,o,
\]
where we have used that $(4^x-1)/x^2$ is increasing for $x\ge1$ and that $(4^p-1)/(3p^2)=\gamma_p(2^p+1)/(3p)\ge\gamma_p$, as $2^p+1\ge3p$.
\end{proof}

\begin{proposition}\label{invariant}
Let $p$ be an odd prime, let $G$ be a $p$-rough group, and let $V$ be a finite faithful irreducible $G$-module. Suppose that $F=F(G)$ is abelian, that $V_F$ is not homogeneous, and that $FC_G(v)=G$ for every $v\in V$ with $C_F(v)=1$. Then $V$ has characteristic $2$, and $\rho(G,V)\ge p$.
\end{proposition}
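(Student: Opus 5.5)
The plan is to fix the homogeneous components $V_1,\dots,V_t$ of $V_F$, with $t>1$. Since $V$ is irreducible, $G$ permutes them transitively. By Lemma~\ref{abelian}(a), $F$ acts on each $V_i$ through a finite field $K_i$, and every $w\in V_i^\#$ has $C_F(w)=C_F(V_i)$. Consequently a vector $v=\sum v_i$ satisfies $C_F(v)=1$ if and only if $v_i\ne0$ for all $i$ with $F$ nontrivial on $V_i$. Since $F$ is nontrivial and $G$ is transitive on the components, $F$ acts nontrivially on every $V_i$, so the $F$-regular vectors are exactly the vectors with all components nonzero. The hypothesis $FC_G(v)=G$ says that for each such $v$ we have $v^G=v^F$ and that $C_G(v)$ is a complement to $F$, isomorphic to $G/F$. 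As $C_G(F)\le F$, the group $G/F\ne1$ acts faithfully on $F$. It also acts nontrivially on the set of components: otherwise $G$ would stabilize each $V_i$, contradicting transitivity with $t>1$.

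\emph{Step 1: the characteristic is $2$.} Suppose the characteristic is odd. Fix an $F$-regular $v$ and an element $g\in C_G(v)$ moving $V_1$ to some $V_j\ne V_1$. Let $u$ be obtained from $v$ by replacing $v_1$ by $-v_1$; then $u$ is again $F$-regular. Hence $C_G(u)$ is also a complement to $F$ and contains an element $h=gf$ with $f\in F$. Comparing $ug=v-2v_1+2v_1g$ with $uh=u$ component by component, I expect to get $v_jf=-v_j$ in $V_j$ while $v_if=v_i$ on the components fixed by $g$. By Lemma~\ref{abelian}(a), $f$ then acts as the scalar $-1$ on $V_j$. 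Conjugating such relations around a $\langle g\rangle$-cycle, and using that $g$ has odd order, should force $-1=1$ on some component, which is the same parity obstruction as in Lemma~\ref{minus}. If this bookkeeping is awkward, I would argue instead that $v$ and $v'=v-2v_j$ (negating one component only) both lie in regular $F$-orbits that must coincide with their $G$-orbits. Counting the $F$-regular vectors, $\prod_i(|V_i|-1)$, against orbit sizes $|F|$, and using the sign-pairing of Lemma~\ref{minus} on each component, should then give a contradiction.

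\emph{Step 2: the bound in characteristic $2$.} Now every $F$-regular vector has a stabilizer $X\cong G/F$, and $X$ acts on $F$ faithfully. I would write $|G:G'|\le|G:F|\cdot|F:[F,G]|$, since $[F,G]\le G'$, and compare with a large orbit. The orbit of an $F$-regular vector only has size $|F|$, so I need vectors with larger orbits. My choice is a vector supported on a single component $V_1$, or on a carefully chosen subset of components using Gluck/Dixon (Theorem~\ref{Gluck}, Lemma~\ref{Dixon}) to kill the permutation part. Its $F$-orbit then has size $|F:C_F(V_1)|$, while $G$ moves the support among the components, which gains a factor of roughly $t$ or $|G:F|$. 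The remaining quantity to control is $|F:[F,G]|$ against $|C_F(V_1)|$ and $|G:F|$. Here the stabilizer $X$ of a regular vector acts on $V_1$ semilinearly through its stabilizer of $V_1$, which has odd order with prime divisors $\ge p$. Lemma~\ref{galois} will give $|[[K_1^\times,\sigma],\sigma]|\ge\gamma_p\,o\ge p\,o$, and this supplies a commutator subgroup large enough to absorb the factor $p$.

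I expect Step 2 to be the main obstacle. The difficulty is making the accounting precise: relating $[F,G]$ to the Galois commutators inside each $K_i^\times$, and handling the interplay between the permutation action of $G/F$ on the components and the field automorphisms on $V_1$. I would first treat the case in which the stabilizer of $V_1$ in $X$ acts nontrivially on $V_1$, where Lemma~\ref{galois} applies directly. In the complementary case $X$ acts essentially as a permutation group of odd order on the components; there I would use Theorem~\ref{Gluck} to produce a vector, with components zero or nonzero according to a set $\Delta$ with trivial setwise stabilizer, whose orbit is larger than $|G:G'|$ by at least the factor $p$.
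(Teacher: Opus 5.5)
Your Step~1 is correct, and it is shorter than you expect. Once you have $v_jf=-v_j$ with $f\in F$ of odd order, you get $v_jf^2=v_j$, hence $v_jf=v_j$ because $f\in\langle f^2\rangle$, so $2v_j=0$. No bookkeeping around $\langle g\rangle$-cycles is needed. This sign argument is a genuinely different route to characteristic $2$. The paper instead first shows that $N_G(V_1)$ is transitive on $V_1^\#$, deduces $|F_1|=|V_1|-1$, and uses that $|F_1|$ is odd. It needs that transitivity, and the resulting regularity of $F_j$ on $V_j^\#$, later anyway.

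Also, your claim that the $F$-regular vectors are exactly those with all components nonzero does not follow from Lemma~\ref{abelian}(a), since $F$ might act faithfully on a proper subsum of the $V_i$. It holds only because of the hypothesis: a vector with proper nonempty support would have $C_G(v)$ stabilizing its support, contradicting transitivity. Its content, $C_F(\bigoplus_{i\ne j}V_i)\ne1$, is needed in Step~2.

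Step~2 has a genuine gap. Your premise that the $F$-regular orbits are too small is mistaken. Let $X=C_G(v_0)$ be the complement to $F$. Your own inequality $|G:G'|\le|G:F|\cdot|F:[F,G]|$ together with an orbit of size $|F|$ already gives $\rho(G,V)\ge|[F,G]|/|X|$. This is exactly what the paper uses, in the form $\rho(G,V)\ge|G'|/|X|$.

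The vectors you choose instead are worse. For $w\in V_1^\#$ you have $C_G(w)\le N_G(V_1)$, so $|w^G|\le t(|V_1|-1)$. Take $G=C_7\wr C_3$ acting monomially on $\GF(8)^3$, which satisfies all hypotheses with $p=3$. There $|G:G'|=21$, and every vector supported on one component has orbit size exactly $21$, whereas $F$-regular vectors have orbit size $343$.

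The real difficulty is the lower bound $|G'|\ge p|X|$, and your sketch has no mechanism for it. Let $K$ be the kernel of the action on the components, $Y=X\cap K$, and $o$ the order of the group of field automorphisms that $Y$ induces on each $E_j$ (your $K_j$). Then $|X|=|G:K|\cdot|Y|\le(\sqrt3)^{t-1}o^t$ by Dixon. So a single factor $\gamma_p o$ from Lemma~\ref{galois} on one component cannot absorb $|X|$; you need a contribution from all $t$ components simultaneously.

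Moreover, you have no way to place $[[E_1^\times,\sigma],\sigma]$ inside $G'$. A priori $F$ is only a subdirect subgroup of $\prod_jF_j$, and $F_1$ may be a proper subgroup of $E_1^\times$. The paper overcomes this as follows:
\begin{itemize}
\item It shows $F_j=E_j^\times$ and introduces $T\cong\prod_jE_j^\times$.
\item It proves $[T,X]\le F$, using the hypothesis and the regularity of $T$ on the vectors with all components nonzero.
\item If $Y\ne1$, this yields $[[T,Y],Y]\le[F,Y]\le G'$, hence $|G'|\ge(\gamma_p o)^t$.
\item If $Y=1$, it uses $B=B_1[B,X]$ with $B_j=C_F(\bigoplus_{i\ne j}V_i)\ne1$. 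This gives $|G'|\ge p^{t-1}$, to be compared with $|X|\le(\sqrt3)^{t-1}$ and $t\ge p$.
\end{itemize}
Your Gluck-vector construction provides no lower bound for $|G'|$. So the claim that its orbit exceeds $|G:G'|$ by a factor $p$ is unsupported.
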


\begin{proof}
Let $V_1,\ldots,V_t$ be the homogeneous components of $V_F$, where $t\ge2$. By Clifford's theorem, $G$ permutes $V_1,\ldots,V_t$ transitively. Let $K$ be the kernel of this permutation action and $H=N_G(V_1)$. As $t=|G:H|$ divides $|G|$ and $t>1$, we have $t\ge p$. By Lemma~\ref{abelian}(a), for each $j$ we have $C_F(v_j)=C_F(V_j)$ for all $v_j\in V_j^\#$. Consequently, a vector $v=v_1+\cdots+v_t$ with $v_j\in V_j$ satisfies $C_F(v)=1$ if and only if $\bigcap C_F(V_j)=1$, the intersection taken over all $j$ with $v_j\ne0$. In particular, every vector in
\[
R_0=\{v_1+\cdots+v_t\mid v_j\in V_j^\#\ \text{for all}\ j\}
\]
has trivial stabilizer in $F$. Moreover, if $C_F(v)=1$, then $FC_G(v)=G$ by hypothesis, so $C_G(v)$ maps onto $G/F$ and hence onto $G/K$, i.e.,
\begin{equation}\label{trans}\tag{$\ast$}
\text{$C_G(v)$ permutes $V_1,\ldots,V_t$ transitively whenever $C_F(v)=1$.}
\end{equation}

\smallskip\noindent\emph{(1) For every $j$, the subgroup $B_j=C_F(\bigoplus_{i\ne j}V_i)$ is nontrivial.} Suppose that $B_j=1$ for some $j$, and let $v=\sum_{i\ne j}v_i$ with $v_i\in V_i^\#$ for all $i\ne j$. Then $C_F(v)=B_j=1$, but $C_G(v)$ stabilizes the set $\{V_i\mid i\ne j\}$ of components in which $v$ has a nonzero component, contradicting \eqref{trans}.

\smallskip\noindent\emph{(2) Fix $v_0=v_{0,1}+\cdots+v_{0,t}\in R_0$ with $v_{0,j}\in V_j^\#$, and put $X=C_G(v_0)$ and $X_1=X\cap H$. Then $G=F\rtimes X$, $H=F\rtimes X_1$, and $X_1$ fixes $v_{0,1}$.} Indeed, $X\cap F=C_F(v_0)=1$ and $FX=G$ by hypothesis. As $F\le K\le H$, we get $H=F(X\cap H)=FX_1$. Finally, $X_1$ fixes $v_0$ and leaves $V_1$ invariant, so it fixes $v_{0,1}$.

\smallskip\noindent\emph{(3) $H$ acts transitively on $V_1^\#$.} Fix $g_0\in G$ with $V_1g_0=V_2$. Let $v_1'\in V_1^\#$ and put $v=v_1'+v_{0,2}+\cdots+v_{0,t}\in R_0$. By \eqref{trans} there is $y\in C_G(v)$ with $V_1y=V_2$, and comparing the components in $V_2$ of $vy=v$, we obtain $v_1'y=v_{0,2}$. Now $y=hg_0$ for some $h\in H$, and hence $v_1'=v_{0,2}g_0^{-1}h^{-1}$ lies in the $H$-orbit of $v_{0,2}g_0^{-1}\in V_1^\#$. As $v_1'$ was arbitrary, the claim follows.

\smallskip\noindent\emph{(4) $V$ has characteristic $2$, and for each $j$, the image $F_j$ of $F$ in $\GL(V_j)$ acts regularly on $V_j^\#$. Moreover, if $E_j$ denotes the subalgebra of $\End(V_j)$ generated by $F_j$, then $E_j$ is a field, $V_j$ is a one-dimensional $E_j$-space, and $F_j=E_j^\times$.} Let $\bar H=H/C_H(V_1)$, and let $\bar F$ and $\bar X_1$ be the images of $F$ and $X_1$ in $\bar H$. By (3), $|V_1|-1=|\bar H:C_{\bar H}(v_{0,1})|$. By (2), $\bar X_1\le C_{\bar H}(v_{0,1})$ and $\bar H=\bar F\bar X_1$, so that $|\bar H:C_{\bar H}(v_{0,1})|\le|\bar H:\bar X_1|\le|\bar F|$. On the other hand, $\bar F=F_1$ acts fixed point freely on $V_1^\#$ by Lemma~\ref{abelian}(a), so $|F_1|\le|V_1|-1$. Hence $|F_1|=|V_1|-1$, and $F_1$ acts regularly on $V_1^\#$. As $|F_1|$ is odd, $|V_1|$ is even, so $V$ has characteristic $2$. By Lemma~\ref{abelian}(a), $E_1$ is a field, and $V_1$ is a vector space over $E_1$. As $E_1^\times\supseteq F_1$, we have $|E_1|\ge|V_1|$, and as $V_1\ne0$ is an $E_1$-space, $|V_1|\ge|E_1|$. Hence $V_1$ is one-dimensional over $E_1$ and $F_1=E_1^\times$. Since $G$ permutes the $V_j$ transitively and normalizes $F$, the same holds for every $j$.

\smallskip\noindent\emph{(5) Let $T\le\GL(V)$ be the group of all elements that leave every $V_j$ invariant and act on $V_j$ as an element of $E_j^\times$, for all $j$. Then $F\le T$, $X$ normalizes $T$, and $[T,X]\le F$.} By (4), $F\le T$. Let $x\in X$, and let $V_ix=V_j$. As $x$ normalizes $F$, conjugation by $x$ maps $F_i$ onto $F_j$ and hence $E_i$ onto $E_j$; thus $x^{-1}Tx=T$. Since each $E_j^\times$ acts regularly on $V_j^\#$, the group $T$ acts regularly on $R_0$, and $R_0=v_0T$. Let $\tau\in T$ and $v=v_0\tau$. As $C_F(v)=1$, we have $G=C_G(v)F$, so every $x\in X$ can be written as $x=cf$ with $c\in C_G(v)$ and $f\in F$; then $vx=vf$. On the other hand, $vx=v_0x(x^{-1}\tau x)=v_0\tau^x$, as $x$ fixes $v_0$. So $v_0\tau^x=v_0\tau f$, and the regularity of $T$ on $R_0$ yields $\tau^x=\tau f$, i.e., $\tau^{-1}\tau^x\in F$. This proves $[T,X]\le F$.

\smallskip\noindent\emph{(6) Let $Y=X\cap K$. We identify $V_j$ with $E_j$ via $v_{0,j}\lambda\mapsto\lambda$. Then every $y\in Y$ acts on each $V_j$ as a field automorphism $\sigma_j(y)$ of $E_j$. Let $Y_j=\{\sigma_j(y)\mid y\in Y\}\le\Gal(E_j)$. Then all $Y_j$ have the same order $o$, and $|Y|\le o^t$. Moreover, $|X|=|G:K|\cdot|Y|$ and $|G:K|\le(\sqrt3)^{t-1}$.} Let $y\in Y$. Then $y$ leaves every $V_j$ invariant, fixes $v_{0,j}$, and normalizes $F_j$, hence $E_j$. So $\lambda\mapsto y^{-1}\lambda y$ is a field automorphism $\sigma_j(y)$ of $E_j$, and $(v_{0,j}\lambda)y=(v_{0,j}y)\lambda^{\sigma_j(y)}=v_{0,j}\lambda^{\sigma_j(y)}$ for $\lambda\in E_j$, which proves the first assertion. As $Y\nsg X$ and $X$ permutes the $V_j$ transitively, conjugation by elements of $X$ shows that all $Y_j$ have the same order $o$. Since $Y$ acts faithfully on $V$, the map $y\mapsto(\sigma_j(y))_j$ is injective, so that $|Y|\le\prod_j|Y_j|=o^t$. Next, $XK\supseteq XF=G$, so $X/Y\cong XK/K=G/K$ and $|X|=|G:K|\cdot|Y|$. Finally, $G/K$ acts faithfully as a permutation group of odd order on $\{V_1,\ldots,V_t\}$, so $|G:K|\le(\sqrt3)^{t-1}$ by Lemma~\ref{Dixon}.

\smallskip\noindent\emph{(7) $\rho(G,V)\ge|G'|/|X|$.} Indeed, $|v_0^G|=|G:X|=|F|=|G|/|X|$, and $|G:G'|=|G|/|G'|$.

\smallskip\noindent\emph{(8) If $Y=1$, then $\rho(G,V)\ge p$.} Let $B=B_1\cdots B_t$. As $B_j$ acts trivially on $V_i$ for $i\ne j$ and faithfully on $V_j$, we have $B=B_1\times\cdots\times B_t$. Conjugation by $X$ permutes the $B_j$ transitively. If $b\in B_j$ and $x\in X$ satisfies $V_1x=V_j$, then $c=xbx^{-1}\in B_1$ and $b=c^x=c\cdot c^{-1}c^x\in B_1[B,X]$. Hence $B=B_1[B,X]$ and, as $|B_1|\ge p$ by (1),
\[
|G'|\ge|[B,X]|\ge|B|/|B_1|=|B_1|^{t-1}\ge p^{t-1}.
\]
Since $|X|=|G:K|\le(\sqrt3)^{t-1}$ by (6), (7) yields $\rho(G,V)\ge(p/\sqrt3)^{t-1}\ge(p/\sqrt3)^{p-1}$, as $t\ge p$. For $p=3$ the right hand side equals $3$, and for $p\ge5$ it is at least $p^4/9>p$.

\smallskip\noindent\emph{(9) If $Y\ne1$, then $\rho(G,V)>p$.} Now $o>1$, and as $Y_j$ is a homomorphic image of $Y$, every prime divisor of $o$ is at least $p$. For $\tau\in T$ and $y\in Y$, the element $y^{-1}\tau y$ acts on $V_j=E_j$ as multiplication by $\tau_j^{\sigma_j(y)}$, where $\tau_j\in E_j^\times$ denotes the element by which $\tau$ acts on $V_j$. Thus $Y$ acts on $T\cong\prod_jE_j^\times$ coordinatewise, and considering elements of $T$ with only one coordinate different from $1$, we see that $[T,Y]=\prod_j[E_j^\times,Y_j]$ and $[[T,Y],Y]=\prod_j[[E_j^\times,Y_j],Y_j]$. Each $Y_j$ is cyclic, being a subgroup of the cyclic group $\Gal(E_j)$, say $Y_j=\langle\sigma\rangle$. Then $[E_j^\times,Y_j]=[E_j^\times,\sigma]$ and $[[E_j^\times,Y_j],Y_j]=[[E_j^\times,\sigma],\sigma]$ (see the proof of Lemma~\ref{galois}), and so Lemma~\ref{galois} gives $|[[E_j^\times,Y_j],Y_j]|\ge\gamma_p\,o$. By (5) we have $[T,Y]\le[T,X]\le F$, hence $[[T,Y],Y]\le[F,Y]\le G'$, and therefore $|G'|\ge(\gamma_p\,o)^t$. By (6), $|X|\le(\sqrt3)^{t-1}o^t$, and so by (7)
\[
\rho(G,V)\ge\frac{\gamma_p^t}{(\sqrt3)^{t-1}}=\gamma_p\Big(\frac{\gamma_p}{\sqrt3}\Big)^{t-1}\ge\gamma_p\Big(\frac{\gamma_p}{\sqrt3}\Big)^{p-1},
\]
as $t\ge p$ and $\gamma_p\ge\gamma_3=7/3>\sqrt3$. For $p=3$ the right hand side equals $343/81>3$, and for $p\ge5$ it is at least $\gamma_p>p$, since $2^p-1>p^2$. This completes the proof.
\end{proof}

\section{Proof of Theorem~\ref{thmA} and Corollary~\ref{corB}}\label{sec:proofA}

We first treat irreducible modules.

\begin{theorem}\label{irred}
Let $p$ be an odd prime, let $G$ be a non-nilpotent $p$-rough group, and let $V$ be a finite faithful irreducible $G$-module. Then $\rho(G,V)\ge p$, unless $(G,V)$ is isomorphic to $(D_1,U_1)$ or to $(D_2,U_2)$, in which case $p=3$ and $\rho(G,V)=7/3$.
\end{theorem}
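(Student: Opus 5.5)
I would argue by induction on $|G|$ (or on $|G|+|V|$), reducing to the situations already handled in Propositions~\ref{semi} and~\ref{invariant}. Put $F=F(G)$ and $M=M(G,V)$. By Theorem~\ref{KYF}, $|G:G'|\le|F:F'|$, and by Clifford's theorem $V_F$ is a faithful completely reducible $F$-module.

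\textbf{Case 1: $F$ is nonabelian.} Proposition~\ref{nilpotent} applied to the nilpotent $p$-rough group $F$ gives
\[
|G:G'|\le|F:F'|\le M(F,V)/p\le M/p .
\]
So $\rho(G,V)\ge p$. Here I use that an $F$-orbit is contained in a $G$-orbit, so $M(F,V)\le M$.

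\textbf{Case 2: $F$ is abelian.} By Lemma~\ref{abelian}(b), $F$ has a regular orbit, so $M(F,V)=|F|$.

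Suppose first that some $v$ with $C_F(v)=1$ satisfies $FC_G(v)<G$. Then the $G$-orbit of $v$ is a union of at least $|G:FC_G(v)|$ regular $F$-orbits, so
\[
|v^G|=|G:C_G(v)|=|G:FC_G(v)|\cdot|F|\ge p|F|\ge p|F:F'|\ge p|G:G'| ,
\]
using that $|G:FC_G(v)|>1$ is a divisor of $|G|$ and hence at least $p$. Thus $\rho(G,V)\ge p$.

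Otherwise every regular orbit of $F$ is $G$-invariant, i.e.\ $FC_G(v)=G$ whenever $C_F(v)=1$. There are two subcases.
\begin{itemize}
\item If $V_F$ is not homogeneous, Proposition~\ref{invariant} gives $\rho(G,V)\ge p$ directly.
\item If $V_F$ is homogeneous, Proposition~\ref{semi} gives $\rho(G,V)\ge p$ except when $(G,V)$ is $(D_1,U_1)$ or $(D_2,U_2)$. In those two cases $p=3$ and $\rho(G,V)=7/3$ by Lemma~\ref{exc}.
\end{itemize}

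The point needing care is that the exceptional pairs arise only in the homogeneous subcase, so they are not lost or created elsewhere. I also need to confirm that in Cases 1 and 2a the inequality $\rho\ge p$ holds outright, so that the exceptions never arise there. Both are immediate from the displayed inequalities.

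The converse direction, that the exceptional pairs are non-nilpotent $3$-rough groups with $\rho=7/3$, is Lemma~\ref{exc}. The statement needs only this "in which case" direction.

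Irreducibility is used in two places: Propositions~\ref{semi} and~\ref{invariant} require it, and Clifford's theorem makes $V_F$ completely reducible. No induction is actually needed; this is a case split on the structure of $F$.
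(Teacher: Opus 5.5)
Your proposal is correct and follows the paper's proof essentially step for step. It uses the same split on whether $F(G)$ is abelian, and the same index estimate $|v^G|=|G:FC_G(v)|\cdot|F|\ge p|F|$ when some regular $F$-orbit is not $G$-invariant. It then appeals to Propositions~\ref{invariant} and~\ref{semi} according to whether $V_{F(G)}$ is homogeneous. As you note yourself, the induction announced at the start is never used and can simply be dropped.
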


\begin{proof}
Let $F=F(G)$. By Theorem~\ref{KYF} we have $|G:G'|\le|F:F'|$, and $V$ is a faithful completely reducible $F$-module by Clifford's theorem.

Suppose first that $F$ is nonabelian. Then Proposition~\ref{nilpotent}, applied to the $p$-rough group $F$, yields
\[
M(G,V)\ge M(F,V)\ge p|F:F'|\ge p|G:G'|.
\]

So let $F$ be abelian. Then $|G:G'|\le|F|$, and by Lemma~\ref{abelian}(b) there exists $v\in V$ with $C_F(v)=1$. If $FC_G(v)\ne G$ for some such $v$, then $|G:FC_G(v)|\ge p$, as $G$ is $p$-rough, and
\begin{align*}
|v^G|&=|G:FC_G(v)|\cdot|FC_G(v):C_G(v)|\\
&=|G:FC_G(v)|\cdot|F|\ge p|F|\ge p|G:G'|.
\end{align*}
Thus we may assume that $FC_G(v)=G$ for every $v\in V$ with $C_F(v)=1$. If $V_F$ is not homogeneous, then Proposition~\ref{invariant} yields $\rho(G,V)\ge p$. If $V_F$ is homogeneous, then Proposition~\ref{semi} applies, and the proof is complete.
\end{proof}

Next we deal with completely reducible modules.

\begin{theorem}\label{cr}
Let $p$ be an odd prime, let $G$ be a non-nilpotent $p$-rough group, and let $V$ be a finite faithful completely reducible $G$-module, possibly of mixed characteristic. Then either $\rho(G,V)\ge p$, or $p=3$ and $\rho(G,V)=7/3$. Moreover, $\rho(G,V)=7/3$ if and only if $(G,V)$ is exceptional.
\end{theorem}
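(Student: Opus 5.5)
We reduce to the irreducible case, Theorem~\ref{irred}, by an induction along a decomposition $V=V_1\oplus\cdots\oplus V_t$ into irreducible $G$-modules, in the style of the second half of the proof of Proposition~\ref{nilpotent}.

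\textbf{Choice of summand.} Since $G$ embeds in $\prod_iG/C_G(V_i)$ and is not nilpotent, some quotient $\bar G=G/C_G(V_i)$ is non-nilpotent. After renumbering, $\bar G=G/C$ with $C=C_G(V_1)$ is non-nilpotent, and $V_1$ is a faithful irreducible $\bar G$-module. Put $W=V_2\oplus\cdots\oplus V_t$. By Clifford's theorem $W$ is a faithful completely reducible $C$-module, so Theorem~\ref{KY} gives $M(C,W)\ge|C:C'|$.

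\textbf{Main inequality.} Lemmas~\ref{product} and~\ref{quot} give
\[
M(G,V)\ge M(\bar G,V_1)\,M(C,W)\ge\rho(\bar G,V_1)\,|\bar G:\bar G'|\,|C:C'|\ge\rho(\bar G,V_1)\,|G:G'|.
\]
Hence $\rho(G,V)\ge\rho(\bar G,V_1)$. Theorem~\ref{irred} then gives $\rho(G,V)\ge p$, unless $(\bar G,V_1)\cong(D_i,U_i)$ with $p=3$. In that case we only get $\rho(G,V)\ge7/3$, and we must show that either $\rho(G,V)\ge3$ or $(G,V)$ is exceptional.

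\textbf{The exceptional case, and the main obstacle.} Assume $\rho(G,V)<3$. Then every inequality in the chain above is tight up to a factor less than $3/(7/3)=9/7$.
\begin{itemize}
\item Every $\rho$ value is a ratio of integers, and the orbit sizes and indices are odd. I expect this to force equality $|G:G'|=|\bar G:\bar G'|\cdot|C:C'|$, and also $M(C,W)=|C:C'|$.
\item Corollary~\ref{collapse} then makes $C$ abelian.
\item The equality case of Lemma~\ref{quot} means $C\cap G'=C'=1$.
\end{itemize}
The goal is to show that $G$ splits as $D\times C$ with $D$ acting trivially on $W$. Once this holds, $(G,V)$ is exceptional.

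For the splitting I would try the following. If some element of $G$ outside $C$ acts nontrivially on $W$, choose a vector $u\in V_1$ whose stabilizer $S=C_G(u)$ has order $3$ modulo $C$ (Lemma~\ref{exc}). The group $S$ acts on $W$ with $S\cap C$ abelian. I would use Lemma~\ref{abelian} on $C$, together with the fact that $S/C$ has order $3$, to find $w\in W$ with $|w^{S}|\ge 3|C|$. Then $|(u+w)^G|\ge 3\,|u^{\bar G}|\,|C|$, which yields $\rho\ge 3\cdot7/3\cdot\ldots$, contradicting $\rho<3$.

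So the stabilizers $C_G(u)$, for $u\in U_i^\#$, act trivially on $W$. Their preimages generate a subgroup $D$ with $DC=G$, by the generation statement in Lemma~\ref{exc}. Then $D\cap C$ acts trivially on both $V_1$ and $W$, so $D\cap C=1$ and $G=D\times C$.

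This step is the main obstacle: the orbit-size accounting in the non-split case is delicate, since the action of $S$ on $W$ need not be semisimple-compatible. I would first try to handle it through Lemma~\ref{abelian}(c) and a counting argument over $S$-orbits.

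\textbf{Converse.} If $(G,V)$ is exceptional, then $M(G,V)=M(D,U)\cdot|A|$ and $|G:G'|=|D:D'|\,|A|$. Hence Lemma~\ref{exc} gives $\rho(G,V)=7/3$.
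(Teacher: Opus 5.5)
Your reduction to $\rho(G,V)\ge\rho(\bar G,V_1)$ and your converse are fine. There is, however, a genuine gap exactly where the statement has content.

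First, write your chain as $\rho(G,V)\ge\frac73\,\mu\,\iota$, with $\mu=M(C,W)/|C:C'|$ and $\iota=|\bar G:\bar G'|\,|C:C'|/|G:G'|$. Then $\rho(G,V)<3$ only gives $\mu\iota<9/7$. Since $\iota$ is an integer dividing $|G|$, this does force $\iota=1$. But $\mu$ is merely a ratio of odd integers, and oddness does not exclude $1<\mu<9/7$. You need an induction on $|G|$: if $C$ is not nilpotent, the theorem for $C$ gives $\mu\ge7/3$, and if $C$ is nilpotent and nonabelian, Proposition~\ref{nilpotent} gives $\mu\ge3$.

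Second, and more seriously, the splitting step is not only unfinished but wrong as formulated. The conclusion that the stabilizers $C_G(u)$ act trivially on $W$ cannot hold if $C\ne1$, since $C\le C_G(u)$ acts faithfully on $W$. What is needed is $|C_G(u)\cap C_G(W)|=3$. Your dichotomy also fails on exceptional pairs themselves. Let $G=D_1\times A$ act on $U_1\oplus\GF(4)$, with $D_1$ trivial on $W=\GF(4)$, $A$ trivial on $U_1$, and $A=\GF(4)^\times$ acting on $W$ by multiplication. Then $C=A$, and for $u\in U_1^\#$ the elements $xa$ with $1\ne x\in C_{D_1}(u)$ and $1\ne a\in A$ lie in $S\setminus C$ and act nontrivially on $W$. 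Yet $|S|=9>|W|$, so no $w$ with $|w^S|\ge3|C|$ exists. Even the corrected version (if $S\cap C_G(W)=1$, then $S$ has a regular orbit on $W$) does not follow from Lemma~\ref{abelian}, because $W$ need not be a completely reducible $S$-module in characteristic $3$, as you suspected.

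The missing idea is to determine $M(G,V)$ exactly before attempting the splitting. After that, work with the stabilizers in $G$ of vectors of $W$, rather than with the action of $S$ on $W$.

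Once $C$ is abelian and $\iota=1$, you have $|G'|=|\bar G'|=7$. Lemmas~\ref{product} and~\ref{exc} give $M(G,V)\ge M(\bar G,V_1)\,|C|=|G|/3$. As stabilizers have odd order, $M(G,V)\in\{|G|,|G|/3\}$, and $M(G,V)=|G|$ would give $\rho(G,V)=|G'|=7$. So $M(G,V)=|G|/3$. This yields $\rho(G,V)=7/3$ and says that every vector of $V$ has a stabilizer of order at least $3$.

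Now let $w\in W$ with $C_C(w)=1$ and $u\in V_1^\#$. Then $C_G(u)\cap C_G(w)$ has order at least $3$ and meets $C$ trivially, so it maps onto $C_{\bar G}(u)$. By the generation statement in Lemma~\ref{exc}, $C_G(w)C=G$. Hence $C_G(w)\cong\bar G$ and $G'=C_G(w)'\le C_G(w)$. Such $w$ span $W$ by Lemma~\ref{abelian}(c), so $G'\le D:=C_G(W)$.

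Then $G/D$ is abelian and has a regular orbit $w_0$ on $W$. So $|C_D(u)|=|C_G(u+w_0)|\ge3$ for all $u\in V_1^\#$. Since $D\cap C=C_G(V)=1$, Lemma~\ref{exc} again gives $DC=G$, that is, $G=D\times C$, and $(G,V)$ is exceptional.
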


\begin{proof}
If $(G,V)$ is exceptional, say $G=D\times A$ and $V=U\oplus W$ as in the introduction, then $C_G(u+w)=C_D(u)\times C_A(w)$ for $u\in U$ and $w\in W$. By Lemma~\ref{exc} and Lemma~\ref{abelian}(b), $M(G,V)=\frac13|D|\cdot|A|$. As $G'=D'$ has order $7$, we get $|G:G'|=\frac17|D|\cdot|A|$, and so $\rho(G,V)=7/3$.

We now prove the remaining assertions by induction on $|G|$. Write $V=V_1\oplus\cdots\oplus V_t$ with irreducible $G$-modules $V_i$. Since $G$ is not nilpotent and isomorphic to a subgroup of the direct product of the groups $G/C_G(V_i)$, some $G/C_G(V_i)$ is not nilpotent, and we may assume that this holds for $i=1$. Put $C=C_G(V_1)$, $\bar G=G/C$ and $W=V_2\oplus\cdots\oplus V_t$. Then $C$ acts faithfully on $W$, and $W$ is a completely reducible $C$-module by Clifford's theorem. By Lemma~\ref{product},
\[
\rho(G,V)\ge\rho(\bar G,V_1)\cdot\mu\cdot\iota,
\]
where
\[
\mu=\frac{M(C,W)}{|C:C'|}\qquad\text{and}\qquad\iota=\frac{|\bar G:\bar G'|\cdot|C:C'|}{|G:G'|}.
\]
Here $\iota$ is an integer dividing $|G|$ by Lemma~\ref{quot}, so either $\iota=1$ or $\iota\ge p$. Moreover, $\mu\ge1$ by Theorem~\ref{KY}. If $C$ is abelian, then $\mu=1$ by Lemma~\ref{abelian}(b); if $C$ is nilpotent and nonabelian, then $\mu\ge p$ by Proposition~\ref{nilpotent}; and if $C$ is not nilpotent, then $\mu\ge7/3$ by induction, as $C<G$ is $p$-rough. Finally, by Theorem~\ref{irred} applied to the non-nilpotent $p$-rough group $\bar G$ acting faithfully and irreducibly on $V_1$, we have $\rho(\bar G,V_1)\ge p$, or $p=3$, $(\bar G,V_1)$ is isomorphic to $(D_1,U_1)$ or $(D_2,U_2)$, and $\rho(\bar G,V_1)=7/3$.

If $\rho(\bar G,V_1)\ge p$, then $\rho(G,V)\ge p$. So we may assume that $p=3$ and that $(\bar G,V_1)$ is isomorphic to $(D_i,U_i)$ for some $i$. If $\mu>1$ or $\iota>1$, then $\rho(G,V)\ge\frac73\min\{\frac73,3\}=\frac{49}9>3$. So we may also assume that $C$ is abelian and that $|G:G'|=|\bar G:\bar G'|\cdot|C|$. Then Lemma~\ref{product} and Lemma~\ref{exc} give $M(G,V)\ge M(\bar G,V_1)\cdot|C|=|G|/3$. As $M(G,V)=|G:C_G(v)|$ for some $v\in V$ with $|C_G(v)|$ odd, either $M(G,V)=|G|$, and then $\rho(G,V)=|G'|\ge7$ by Lemma~\ref{comm}(b), or $M(G,V)=|G|/3$, and then $\rho(G,V)=|G'|/3$. Now $|G'|=|G|/(|\bar G:\bar G'|\cdot|C|)=|\bar G'|=7$ by Lemma~\ref{exc}, so in the latter case $\rho(G,V)=7/3$. This proves that $\rho(G,V)\ge p$, or $p=3$ and $\rho(G,V)=7/3$.

Now suppose that $\rho(G,V)=7/3$. Then $p=3$, and by the preceding paragraph, $(\bar G,V_1)$ is isomorphic to some $(D_i,U_i)$, $C$ is abelian, $|G:G'|=|\bar G:\bar G'|\cdot|C|$, $|G'|=7$, and $M(G,V)=|G|/3$. By Lemma~\ref{quot}, $|G:G'|=|\bar G:\bar G'|\cdot|C:C\cap G'|$, so $C\cap G'=1$. Put $Q=G'$. We show that $(G,V)$ is exceptional in three steps.

\smallskip\noindent\emph{(a) For $v_1\in V_1^\#$ we have $|v_1^G|=|\bar G|/3$ and $|C_G(v_1):C|=3$.} This is immediate from Lemma~\ref{exc}, as $C_G(v_1)/C=C_{\bar G}(v_1)$.

\smallskip\noindent\emph{(b) $Q$ acts trivially on $W$.} Let $w\in W$ with $C_C(w)=1$, and let $v_1\in V_1^\#$. As $M(G,V)=|G|/3$, we have $|C_G(v_1)\cap C_G(w)|=|C_G(v_1+w)|\ge3$. On the other hand, $C_G(v_1)\cap C_G(w)\cap C=C_C(w)=1$, so $C_G(v_1)\cap C_G(w)$ embeds into $C_G(v_1)/C$, which has order $3$ by (a). Hence $C_G(w)C\supseteq C_G(v_1)$. As this holds for every $v_1\in V_1^\#$ and the groups $C_{\bar G}(v_1)=C_G(v_1)/C$ generate $\bar G$ by Lemma~\ref{exc}, we obtain $C_G(w)C=G$. Since $C_G(w)\cap C=1$, it follows that $C_G(w)\cong\bar G$, and so $|C_G(w)'|=|\bar G'|=7=|Q|$. As $C_G(w)'\le G'=Q$, we get $Q\le C_G(w)$. So $Q$ fixes every $w\in W$ with $C_C(w)=1$, and as these vectors span $W$ by Lemma~\ref{abelian}(c), the claim follows.

\smallskip\noindent\emph{(c) $G=C\times D$ with $D=C_G(W)$.} By (b), $G'=Q\le D$, so $G/D$ is abelian. As $G/D$ acts faithfully and completely reducibly on $W$, by Lemma~\ref{abelian}(b) there is $w_0\in W$ with $C_G(w_0)=D$. Let $v_1\in V_1^\#$. Then
\begin{align*}
|(v_1+w_0)^G|&=|G:C_G(w_0)|\cdot|C_G(w_0):C_G(w_0)\cap C_G(v_1)|\\
&=|G:D|\cdot|D:C_D(v_1)|=\frac{|G|}{|C_D(v_1)|},
\end{align*}
and as $M(G,V)=|G|/3$, we get $|C_D(v_1)|\ge3$. Since $D\cap C=C_G(V)=1$, the natural map $D\to\bar G$ is injective and maps $C_D(v_1)$ into $C_{\bar G}(v_1)$, which has order $3$. Hence $DC/C\supseteq C_{\bar G}(v_1)$ for all $v_1\in V_1^\#$, and so $DC/C=\bar G$ by Lemma~\ref{exc}. Therefore $G=DC$, and as $C$ and $D$ are normal subgroups with $C\cap D=1$, we have $G=C\times D$.

\smallskip
Now $D\cong\bar G$ acts faithfully on $V_1$, and $(D,V_1)$ is isomorphic to $(\bar G,V_1)$, hence to some $(D_i,U_i)$. Moreover, $D$ acts trivially on $W$, and the abelian group $C$ acts trivially on $V_1$ and faithfully on $W$. Thus $(G,V)$ is exceptional (with $U=V_1$ and $A=C$), which completes the proof.
\end{proof}

\begin{proof}[Proof of Theorem~\ref{thmA}]
As $p$ is the smallest prime divisor of $|G|$, the group $G$ is $p$-rough. If $G$ is nilpotent, then $\rho(G,V)\ge p$ by Proposition~\ref{nilpotent}, and $(G,V)$ is not exceptional, as $D_1$ and $D_2$ are not nilpotent. If $G$ is not nilpotent, then by Theorem~\ref{cr} either $\rho(G,V)\ge p$, or $(G,V)$ is exceptional, $p=3$ and $\rho(G,V)=7/3$. This proves the first assertion, and the second one follows from Proposition~\ref{Gprime}.
\end{proof}

\begin{proof}[Proof of Corollary~\ref{corB}]
Let $p$ be the smallest prime divisor of $|G|$. If $(G,V)$ is not exceptional, then $|G:G'|\le M/p\le M/3<\frac37M$ by Theorem~\ref{thmA}. If $(G,V)$ is exceptional, then $|G:G'|=\frac37M$ by Theorem~\ref{cr}.
\end{proof}

\section{Modules of odd order}\label{sec:oddV}

In this section we prove Theorem~\ref{thmC}. The following result is our main tool for dealing with imprimitive modules. Here the assumptions that $|G|$ and the characteristic of $V$ are odd are essential.

\begin{proposition}\label{imprim}
Let $G$ be a finite group of odd order and let $V$ be a finite $G$-module over a field of odd characteristic. Suppose that $V=W_1\oplus\cdots\oplus W_n$ for subspaces $W_1,\ldots,W_n$ which are permuted transitively by $G$, and let $K$ be the kernel of the action of $G$ on $\{W_1,\ldots,W_n\}$. Then
\[
M(G,V)\ge|G:K|\cdot M(K,V).
\]
\end{proposition}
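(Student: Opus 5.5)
We will construct a vector whose $G$-stabilizer lies in $K$ and is no larger than the stabilizer of a suitable vector in the $K$-action. Choose $w=w_1+\cdots+w_n$ with $w_i\in W_i$ and $|w^K|=M(K,V)$, so that $C_K(w)$ has order $|K|/M(K,V)$. Put $P=G/K$. This is a permutation group of odd order on $\Omega=\{W_1,\ldots,W_n\}$. By Theorem~\ref{Gluck} there is a subset $\Delta\subseteq\Omega$ whose setwise stabilizer in $P$ is trivial. Using $\Delta$, we will modify $w$ by signs: set
\[
v=\sum_{W_i\in\Delta}w_i-\sum_{W_i\notin\Delta}w_i .
\]
Since $K$ fixes each $W_i$ and acts linearly, $C_K(v)=C_K(w)$, because $k$ fixes $v$ if and only if it fixes each component. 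It therefore suffices to show $C_G(v)\le K$, since then $|v^G|=|G:C_K(w)|=|G:K|\cdot M(K,V)$.

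The main obstacle is that a component $w_i$ may be zero, and then its sign carries no information. We deal with this first by choosing $w$ with all components nonzero. If $w_i=0$, replace it by some nonzero $u_i\in W_i$; then $C_K(w+u_i)\le C_K(w)$ component-wise, so the orbit size can only grow, and it stays maximal. Hence we may assume $w_i\neq0$ for all $i$.

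Now let $g\in C_G(v)$. Then $g$ permutes the $W_i$, sending the component $\pm w_i$ of $v$ to the component of $v$ in $W_ig$. Consider the $G$-invariant partition of each $W_i^\#$ given by Lemma~\ref{minus}: we need a $G$-invariant $S\subseteq V^\#$ with $S\cap(-S)=\emptyset$. To use it, we refine the choice of $w$. Pick $S$ as in Lemma~\ref{minus}. Since $C_K(-w_i)=C_K(w_i)$, we may replace each $w_i$ by $-w_i$ if necessary, without changing $C_K(w)$, so that every $w_i\in S$. Then for $W_i\in\Delta$ the component of $v$ is $w_i\in S$, and for $W_i\notin\Delta$ it is $-w_i\in -S$.

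For $g\in C_G(v)$ and $W_ig=W_j$, the $W_j$-component of $v=vg$ is $w_ig$ or $-w_ig$ according as $W_i\in\Delta$ or not. Since $S$ and $-S$ are $G$-invariant and disjoint, $g$ maps $\Delta$ onto $\Delta$. Therefore $gK$ stabilizes $\Delta$ setwise, so $g\in K$ by the choice of $\Delta$. This gives $C_G(v)=C_K(v)=C_K(w)$ and completes the argument.

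The subtle points are choosing the components nonzero and lying in $S$ simultaneously while keeping $C_K(w)$ minimal, and realizing that $G$-invariance of $S$ (rather than of orbits) is what Lemma~\ref{minus} supplies. Oddness of $|G|$ enters twice: in Theorem~\ref{Gluck}, and in Lemma~\ref{minus} together with the odd characteristic.
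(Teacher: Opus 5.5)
Your argument is correct and is essentially the paper's proof. It uses Gluck's set $\Delta$, takes a vector with maximal $K$-orbit whose components are made nonzero and normalized into a sign class $S$, and flips the signs of the components outside $\Delta$. The one difference is that you apply Lemma~\ref{minus} to $G$ acting on all of $V$, which gives a $G$-invariant $S$ at once, so that $(S\cap W_i)g=S\cap W_j$ whenever $W_ig=W_j$ holds automatically. The paper instead applies the lemma to $N_G(W_1)$ on $W_1$ and transports $S$ by coset representatives, so your version is a slight simplification.
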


\begin{proof}
Let $H=N_G(W_1)$ be the stabilizer of $W_1$ in $G$, and choose $t_1=1,t_2,\ldots,t_n\in G$ with $W_1t_i=W_i$ for all $i$. By Lemma~\ref{minus}, applied to the action of $H$ on $W_1$, there is an $H$-invariant subset $S\subseteq W_1^\#$ with $W_1^\#=S\cup(-S)$ and $S\cap(-S)=\emptyset$. For $1\le i\le n$ put $S_i=St_i\subseteq W_i^\#$. Then $W_i^\#=S_i\cup(-S_i)$ and $S_i\cap(-S_i)=\emptyset$. Moreover, if $g\in G$ and $W_ig=W_j$, then $t_igt_j^{-1}\in H$, and therefore
\begin{equation}\label{transport}
S_ig=S(t_igt_j^{-1})t_j=St_j=S_j.
\end{equation}
Since $G/K$ acts faithfully on the set $\{W_1,\ldots,W_n\}$ as a permutation group of odd order, Theorem~\ref{Gluck} yields a subset $\Delta\subseteq\{1,\ldots,n\}$ such that, for $\mathcal W_\Delta=\{W_i\mid i\in\Delta\}$, every $g\in G$ with $\mathcal W_\Delta g=\mathcal W_\Delta$ lies in $K$. Put $\varepsilon_i=1$ if $i\in\Delta$, and $\varepsilon_i=-1$ otherwise.

Now choose $x\in V$ with $|x^K|=M(K,V)$, and write $x=x_1+\cdots+x_n$ with $x_i\in W_i$. As $K$ leaves every $W_i$ invariant, we have $C_K(x)=\bigcap_iC_K(x_i)$. Thus replacing any $x_i=0$ by an arbitrary element of $W_i^\#$ does not enlarge $C_K(x)$, and so we may assume that $x_i\ne0$ for all $i$. Choose $\delta_i\in\{1,-1\}$ with $\delta_ix_i\in S_i$, and put
\[
v=\sum_{i=1}^n\varepsilon_i\delta_ix_i.
\]
Since $C_K(-x_i)=C_K(x_i)$ for all $i$, we have $C_K(v)=C_K(x)$.

Let $g\in C_G(v)$, and let $\pi$ be the permutation of $\{1,\ldots,n\}$ with $W_ig=W_{\pi(i)}$. Comparing the components in $W_{\pi(i)}$ of $vg=v$, we obtain $\varepsilon_i\delta_ix_ig=\varepsilon_{\pi(i)}\delta_{\pi(i)}x_{\pi(i)}$ for all $i$. By \eqref{transport} we have $\delta_ix_ig\in S_ig=S_{\pi(i)}$, and also $\delta_{\pi(i)}x_{\pi(i)}\in S_{\pi(i)}$. As $S_{\pi(i)}\cap(-S_{\pi(i)})=\emptyset$, it follows that $\varepsilon_i=\varepsilon_{\pi(i)}$ for all $i$. Therefore $\mathcal W_\Delta g=\mathcal W_\Delta$, and hence $g\in K$. This shows that $C_G(v)=C_K(v)=C_K(x)$, and consequently
\[
M(G,V)\ge|v^G|=|G:C_K(x)|=|G:K|\cdot|K:C_K(x)|=|G:K|\cdot M(K,V).
\]
\end{proof}

The heart of the matter is the following result on non-nilpotent groups.

\begin{theorem}\label{nonnil}
Let $p$ be an odd prime, let $G$ be a non-nilpotent $p$-rough group, and let $V$ be a finite faithful completely reducible $G$-module, possibly of mixed characteristic, such that $|V|$ is odd. Then
\[
M(G,V)\ge\frac{13}{3}|G:G'|,
\]
and if $p\ge5$, then $M(G,V)>2p|G:G'|$.
\end{theorem}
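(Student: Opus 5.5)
Write $\rho=\rho(G,V)$ and let $c_p=13/3$ if $p=3$ and $c_p=2p$ if $p\ge5$; we must show $\rho\ge 13/3$, and $\rho>2p$ when $p\ge5$. The argument will be an induction on $|G|+|V|$, following the same layout as the proof of Theorem~\ref{cr}: first the irreducible case, then the completely reducible case.

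\emph{Reduction to irreducible modules.} Write $V=V_1\oplus W$ with $\bar G=G/C$ non-nilpotent, where $C=C_G(V_1)$. As in the proof of Theorem~\ref{cr}, Lemma~\ref{product} gives
\[
\rho(G,V)\ge\rho(\bar G,V_1)\cdot\mu\cdot\iota,
\]
where $\mu\ge1$ by Theorem~\ref{KY} and $\iota\ge1$. So the irreducible bound for $(\bar G,V_1)$ transfers immediately. The strict inequality for $p\ge5$ also transfers.

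\emph{Irreducible case, imprimitive.} Let $V$ be irreducible but not quasiprimitive. Choose $N\nsg G$ with $V_N$ inhomogeneous, and let $W_1,\ldots,W_n$ be the homogeneous components, so $n\ge p$. Let $K$ be the kernel of the permutation action on the $W_i$. Proposition~\ref{imprim} gives $M(G,V)\ge|G:K|\,M(K,V)$, and Lemma~\ref{quot} gives $|G:G'|\le|G:G'K|\cdot|K:K'|\le|G:K|\cdot|K:K'|$. Hence $\rho(G,V)\ge\rho(K,V)$, where $V$ is a faithful completely reducible $K$-module of odd order.
\begin{itemize}
\item If $K$ is non-nilpotent, induction finishes.
\item If $K$ is nilpotent, then $K$ has a regular orbit by Lemma~\ref{nilreg}. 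The estimate then becomes $M(G,V)\ge|G:K|\,|K|=|G|$, so $\rho\ge|G'|\ge2p+1$ by Lemma~\ref{comm}(b). This already beats $2p$. For $p=3$ we need $|G'|\ge 5$, and in fact $|G'|\ge7>13/3$.
\end{itemize}
So in the imprimitive case we actually get a regular orbit, hence $\rho=|G'|$, which is comfortably large.

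\emph{Irreducible case, quasiprimitive.} Now let $V$ be quasiprimitive. If $F(G)$ is not cyclic, Lemma~\ref{Espuelas} gives a regular orbit, so $\rho=|G'|\ge2p+1$, which suffices as above. If $F(G)$ is cyclic, Lemma~\ref{semilinear} makes $V_{F(G)}$ homogeneous with $F(G)$ abelian. Proposition~\ref{semi}(a) then supplies $v$ with $|C_G(v)|\le\frac3{13}|G'|$, or $<\frac1{2p}|G'|$ when $p\ge5$. Hence $\rho\ge|G'|/|C_G(v)|$ gives the bounds.

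\emph{Main obstacle.} The delicate point is the reduction step in the completely reducible case, specifically making sure the induction hypothesis is applied only to non-nilpotent groups. The handling of nilpotent $K$ in the imprimitive case is harmless because it produces regular orbits. Checking that Proposition~\ref{imprim} applies needs a little care. Its hypotheses are that the $W_i$ are permuted transitively by $G$ and that the characteristic is odd, and both are available because $V$ is irreducible with $|V|$ odd.
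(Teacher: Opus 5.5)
Your proof is correct and follows the paper's argument essentially step for step: the reduction to irreducible $V$ via Lemmas~\ref{product} and~\ref{quot}, the quasiprimitive case via Lemma~\ref{Espuelas} and Proposition~\ref{semi}(a), and the imprimitive case via Proposition~\ref{imprim}, with nilpotent $K$ giving a regular orbit and non-nilpotent $K$ handled by induction (inducting on $|G|+|V|$ rather than on $|G|$ is an inessential variation). Two small inaccuracies do no harm: your summary claim that the imprimitive case always yields a regular orbit holds only when $K$ is nilpotent (your bullets handle both subcases correctly), and homogeneity of $V_{F(G)}$ in the quasiprimitive case comes directly from quasiprimitivity rather than from Lemma~\ref{semilinear}.
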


\begin{proof}
We argue by induction on $|G|$. We have to show that $\rho(G,V)\ge13/3$, and that $\rho(G,V)>2p$ if $p\ge5$; we refer to this as the assertion for $(G,V)$. We proceed in several steps.

\smallskip\noindent\emph{Step 1. If $G$ has a regular orbit on $V$, then the assertion holds.} Indeed, then $M(G,V)=|G|=|G'|\cdot|G:G'|$, so that $\rho(G,V)=|G'|\ge2p+1$ by Lemma~\ref{comm}(b), and $2p+1\ge7>13/3$.

\smallskip\noindent\emph{Step 2. A reduction to the case that $V$ is irreducible.} Write $V=V_1\oplus\cdots\oplus V_t$ with irreducible $G$-modules $V_i$. As $G$ is isomorphic to a subgroup of the direct product of the groups $G/C_G(V_i)$ and $G$ is not nilpotent, some $G/C_G(V_i)$ is not nilpotent, and we may assume that this holds for $i=1$. Put $C=C_G(V_1)$, $\bar G=G/C$ and $W=V_2\oplus\cdots\oplus V_t$. Then $C$ acts faithfully on $W$, and $W$ is a completely reducible $C$-module by Clifford's theorem, so that $M(C,W)\ge|C:C'|$ by Theorem~\ref{KY}. Suppose that $C\ne1$. Then $|\bar G|<|G|$, and $\bar G$ is a non-nilpotent $p$-rough group, so that the assertion holds for $(\bar G,V_1)$ by induction. As $M(G,V_1)=M(\bar G,V_1)$, Lemma~\ref{product}, Theorem~\ref{KY},  and Lemma~\ref{quot} yield
\[
\rho(G,V)\ge\rho(\bar G,V_1)\cdot\frac{M(C,W)}{|C:C'|}\cdot\frac{|\bar G:\bar G'|\cdot|C:C'|}{|G:G'|}\ge\rho(\bar G,V_1),
\]
and the assertion holds for $(G,V)$. If $C=1$, then $G$ acts faithfully on $V_1$, and $\rho(G,V)\ge\rho(G,V_1)$. Hence it suffices to prove the theorem in the case that $V$ is irreducible, which we assume from now on.

\smallskip\noindent\emph{Step 3. The case that $V$ is quasiprimitive.} By Lemma~\ref{Espuelas}, either $G$ has a regular orbit on $V$, and we are done by Step 1, or $F(G)$ is cyclic. In the latter case, $V_{F(G)}$ is homogeneous, as $V$ is quasiprimitive, and Proposition~\ref{semi}(a) yields $v\in V$ with $|C_G(v)|\le\frac3{13}|G'|$, and with $|C_G(v)|<\frac1{2p}|G'|$ if $p\ge5$. As $\rho(G,V)\ge|G:C_G(v)|/|G:G'|=|G'|/|C_G(v)|$, the assertion follows.

\smallskip\noindent\emph{Step 4. The case that $V$ is not quasiprimitive.} Then there exists $N\nsg G$ such that $V_N$ is not homogeneous. By Clifford's theorem, $V_N=W_1\oplus\cdots\oplus W_n$ with $n\ge2$, where $W_1,\ldots,W_n$ are the homogeneous components of $V_N$, and $G$ permutes $W_1,\ldots,W_n$ transitively. Let $K$ be the kernel of this permutation action. Then $K\nsg G$, and $K<G$ as $n\ge2$. By Proposition~\ref{imprim} we have
\[
M(G,V)\ge|G:K|\cdot M(K,V).
\]
Moreover, $K$ is $p$-rough and acts faithfully on $V$, and $V$ is a completely reducible $K$-module by Clifford's theorem. If $K$ is nilpotent, then $M(K,V)=|K|$ by Lemma~\ref{nilreg}, so that $M(G,V)\ge|G|$. Thus $G$ has a regular orbit on $V$, and we are done by Step 1. If $K$ is not nilpotent, then the assertion holds for $(K,V)$ by induction, and Lemma~\ref{quot} yields
\begin{align*}
\rho(G,V)&\ge\rho(K,V)\cdot\frac{|G:K|\cdot|K:K'|}{|G:G'|}\\
&\ge\rho(K,V)\cdot\frac{|G/K:(G/K)'|\cdot|K:K'|}{|G:G'|}\ge\rho(K,V).
\end{align*}
So the assertion holds for $(G,V)$, and the proof is complete.
\end{proof}

\begin{remark}\label{pairs}
Let $G$ and $V$ be as in Theorem~\ref{thmC}. By Lemma~\ref{minus}, for $v\in V^\#$ the orbits $v^G$ and $(-v)^G$ are distinct and have the same size. In particular, the orbits of $G$ of maximal size on $V^\#$ come in pairs, and if $V\ne0$, then in the situation of Theorem~\ref{thmC}(a) the group $G$ has at least two regular orbits on $V$. This is in accordance with \cite[Theorem 2.1(c)]{CK24b}, by which a nonabelian group $G$ with $|G:G'|=M$ has at least two orbits of size $M$.
\end{remark}

We are now ready to prove Theorem~\ref{thmC} and Corollary~\ref{corD}.

\begin{proof}[Proof of Theorem~\ref{thmC}]
(a) By Lemma~\ref{nilreg}, $G$ has a regular orbit on $V$. So $M=|G|$, and $|G:G'|=|G|/|G'|=M/|G'|$.

(b) By Theorem~\ref{nonnil} we have $M\ge\frac{13}{3}|G:G'|$, and $M>2p|G:G'|$ if $p\ge5$. Choose $v\in V$ with $|G:C_G(v)|=M$. Then $|C_G(v)|=|G|/M\le\frac3{13}|G'|$, and $|C_G(v)|<\frac1{2p}|G'|$ if $p\ge5$. The remaining assertions are clear.

(c) If $G$ is nilpotent, then $|G:G'|=M/|G'|$ by (a). As $G$ is nonabelian, $G'\ne1$, so $|G'|\ge p$, and hence $|G:G'|\le M/p$, with equality if and only if $|G'|=p$. If $G$ is not nilpotent, then $|G'|\ne p$ by Lemma~\ref{comm}(a), and by (b) we have $|G:G'|\le\frac3{13}M<\frac M3=\frac Mp$ if $p=3$, and $|G:G'|<\frac M{2p}<\frac Mp$ if $p\ge5$.
\end{proof}

\begin{proof}[Proof of Corollary~\ref{corD}]
Suppose that $|G'|=p$. By Lemma~\ref{comm}(a), $G'\le Z(G)$ and $G$ is nilpotent, so $G=P\times A$ with $P\in\Syl_p(G)$ and $A$ the Hall $p'$-subgroup of $G$. As $G'=P'\times A'$, we get $A'=1$ and $|P'|=p$. Conversely, if $G=P\times A$ with $P$ and $A$ as in the statement, then $G'=P'$ has order $p$.

Now let $|G'|=p$. Then $P'=G'\le Z(G)$, so $P$ has nilpotency class $2$, and $[x^p,y]=[x,y]^p=1$ for all $x,y\in P$. Hence $P/Z(P)$ is elementary abelian. As $P'\le Z(P)$, the commutator map induces an alternating bilinear form on the $\GF(p)$-vector space $P/Z(P)$ with values in $P'\cong\GF(p)$, and this form is nondegenerate by the definition of $Z(P)$. Therefore $|P:Z(P)|=p^{2m}$ for some $m$, and $m\ge1$ as $P$ is nonabelian. By Proposition~\ref{Gprime}, $G$ has a regular orbit on every finite faithful completely reducible $G$-module $V$, and $|G:G'|=M/p$.

Finally, let $V$ be a faithful irreducible $G$-module over a finite field $F$. By Schur's lemma, $D=\End_{FG}(V)$ is a finite division ring and hence a field, and $Z(G)$ acts on $V$ by elements of $D^\times$. As $V$ is faithful, $Z(G)$ is isomorphic to a subgroup of $D^\times$ and therefore is cyclic. Since $A\le Z(G)$, also $A$ is cyclic.
\end{proof}

\section{Examples}\label{sec:examples}

In this section we present the examples mentioned in the introduction.

\begin{example}\label{ex:gamma}
Let $p$ be an odd prime, let $E=\GF(2^p)$, and let $G=\Gamma(E)$ act on $V=E\cong V(p,2)$. If $q$ is a prime divisor of $2^p-1$, then the order of $2$ modulo $q$ is $p$, so that $q\equiv1\pmod p$; hence $p$ is the smallest prime divisor of $|G|=p(2^p-1)$. The automorphism $x\mapsto x^2$ has fixed field $\GF(2)$ and therefore acts fixed point freely on $\Gamma_0(E)\cong C_{2^p-1}$. As in step (2) of the proof of Proposition~\ref{semi}, it follows that $G'=\Gamma_0(E)$ and $|G:G'|=p$. As $\Gamma_0(E)$ acts transitively on $V^\#$, we have $M=2^p-1$, and therefore
\[
|G:G'|=\frac{p}{2^p-1}\,M.
\]
For $p=3$ we obtain $(G,V)=(D_1,U_1)$ and $|G:G'|=\frac37M>\frac M3$. For $p\ge5$ we have $2^p-1>p^2$, and so $|G:G'|<M/p$, in accordance with Theorem~\ref{thmA}. So the reason for the existence of exceptional configurations for $p=3$ is the inequality $2^3-1<3^2$.
\end{example}

\begin{example}\label{ex:13}
Let $E=\GF(27)$, let $A\le E^\times$ be the subgroup of order $13$, which consists of the nonzero squares of $E$, and let $\sigma\in\Gal(E)$ be the Frobenius automorphism $x\mapsto x^3$. Let $G=A\langle\sigma\rangle\le\Gamma(E)$, where $a\in A$ acts as $x\mapsto ax$. Then $\sigma$ acts on $A$ by $a\mapsto a^3$, and as $3$ has order $3$ modulo $13$, this action is fixed point free. So $G\cong C_{13}\rtimes C_3$ is a Frobenius group of order $39$ with $G'=A$, and $|G:G'|=3$. Moreover, $A$ acts irreducibly on $V=E\cong V(3,3)$, again because $3$ has order $3$ modulo $13$, and so $V$ is a faithful irreducible quasiprimitive $G$-module. The group $A$ has exactly two orbits on $V^\#$, namely $A$ and $-A$ (note that $-1$ is not a square in $E$), and both are $\sigma$-invariant, as $x^3$ is a square if and only if $x$ is. Hence $G$ has exactly two orbits on $V^\#$, each of size $13$. Thus $M=13$ and
\[
|G:G'|=3=\frac{3}{13}M,
\]
so the bound in Theorem~\ref{thmC}(b) is best possible. Of course, $G$ has no regular orbit on $V$, as $|G|=39>|V^\#|=26$.

By Lemma~\ref{Espuelas} and Proposition~\ref{semi}(a), if equality holds in Theorem~\ref{thmC}(b) and $V$ is quasiprimitive, then $|V|=27$ and $|G|=39$. Further examples of equality are obtained as follows: if $B$ is an abelian group of odd order and $W$ is a finite faithful completely reducible $B$-module with $|W|$ odd, then $G\times B$ acts faithfully and completely reducibly on $V\oplus W$, with $G$ acting trivially on $W$ and $B$ acting trivially on $V$. Here $M(G\times B,V\oplus W)=13|B|$, as $B$ has a regular orbit on $W$ by Lemma~\ref{abelian}(b), and so again $|G\times B:(G\times B)'|=3|B|=\frac{3}{13}M(G\times B,V\oplus W)$.
\end{example}

\begin{example}\label{ex:SG}
Let $p\ge5$ be a prime such that $q=2p+1$ is a prime; for instance, $p\in\{5,11,23,29\}$. Then $p\not\equiv1\pmod3$, since otherwise $3$ would divide $q$, and so $p\equiv5\pmod 6$ and $q\equiv11\pmod{12}$. By quadratic reciprocity, $3$ is a square modulo $q$, so the order of $3$ modulo $q$ divides $(q-1)/2=p$, and as $3\not\equiv1\pmod q$, this order equals $p$. Let $E=\GF(3^p)$, let $A\le E^\times$ be the subgroup of order $q$, let $\sigma\in\Gal(E)$ be the automorphism $x\mapsto x^3$, and let $G=A\langle\sigma\rangle\le\Gamma(E)$. Then $\sigma$ acts on $A$ by $a\mapsto a^3$, i.e., fixed point freely, so that $G\cong C_q\rtimes C_p$ is a Frobenius group with $G'=A$ and $|G:G'|=p$, and $p$ is the smallest prime divisor of $|G|$. As $3$ has order $p$ modulo $q$, the group $A$ acts irreducibly on $V=E\cong V(p,3)$.

The nonidentity elements of $A$ have no nonzero fixed points on $V$. Every element of $G\setminus A$ acts as $x\mapsto bx^{3^j}$ for some $b\in E^\times$ and some $j\not\equiv0\pmod p$, and as the fixed field of $x\mapsto x^{3^j}$ is $\GF(3)$, the argument in step (5) of the proof of Proposition~\ref{semi} shows that such an element has at most two nonzero fixed points on $V$. As $G$ has exactly $q$ subgroups of order $p$, at most $2q$ vectors of $V^\#$ have a nontrivial stabilizer in $G$. Since $2q<3^p-1$, the group $G$ has a regular orbit on $V$, whence $M=|G|=pq$ and
\[
|G:G'|=\frac{M}{2p+1}.
\]
So the bound $|G:G'|<M/(2p)$ in Theorem~\ref{thmC}(b) cannot be improved to $|G:G'|<M/(2p+1)$ for such $p$. For $p=5$ we obtain the group $C_{11}\rtimes C_5$ acting on $V(5,3)$; a direct computation shows that here exactly $22$ vectors of $V^\#$ have a nontrivial stabilizer, so that $G$ has exactly four regular orbits on $V$.
\end{example}

\begin{example}\label{ex:heis}
Let $p$ be an odd prime, let $s$ be a prime power with $s\equiv1\pmod p$, and let $\omega\in\GF(s)$ be a primitive $p$-th root of unity. Let $e_0,\ldots,e_{p-1}$ be the standard basis of $V=V(p,s)$, with indices taken modulo $p$, and let $G=\langle x,y\rangle\le\GL(V)$, where $e_ix=\omega^ie_i$ and $e_iy=e_{i+1}$ for all $i$. Then $x^p=y^p=1$ and $y^{-1}xy=\omega^{-1}x$, so that $[x,y]=\omega^{-1}\cdot1_V$, and $G=(\langle x\rangle\times\langle\omega\cdot1_V\rangle)\rtimes\langle y\rangle$ is the Heisenberg group of order $p^3$ and exponent $p$, with $G'=Z(G)=\langle\omega\cdot1_V\rangle$ of order $p$. The eigenspaces of $x$ on $V$ are the coordinate axes, which afford pairwise distinct eigenvalues and are permuted transitively by $y$; hence $V$ is a faithful irreducible $G$-module. By Corollary~\ref{corD}, $G$ has a regular orbit on $V$, so that $M=p^3$ and
\[
|G:G'|=p^2=\frac Mp,
\]
in accordance with Theorems~\ref{thmA} and~\ref{thmC}. Note that $s$ may be odd (for instance, any prime $s\equiv1\pmod p$) or even (for instance, $s=2^f$, where $f$ is the order of $2$ modulo $p$).
\end{example}

\begin{example}\label{ex:wr}
Let $V=V(3,4)$, let $B\le\GL(3,4)$ be the group of all diagonal matrices, which is elementary abelian of order $27$, and let $u\in\GL(3,4)$ be the permutation matrix of a $3$-cycle. Then $G=B\langle u\rangle\cong C_3\wr C_3$ is a $3$-group of order $81$ acting faithfully on $V$, and $V$ is irreducible, since the coordinate axes afford pairwise distinct linear characters of $B$ and are permuted transitively by $u$. The map $b\mapsto b^{-1}b^u$ is a homomorphism from $B$ onto $[B,u]$ whose kernel $C_B(u)$ consists of the scalar matrices in $B$, so that $|[B,u]|=9$. As in Step (2) of the proof of Proposition~\ref{semi}, $[B,u]\nsg G$ and $G/[B,u]$ is abelian, and so $G'=[B,u]$ has order $9$ and $|G:G'|=9$. If $v\in V$ has three nonzero coordinates, then $v^B$ consists of all $27$ vectors with three nonzero coordinates, as $\GF(4)^\times$ has order $3$. If $v$ has exactly two nonzero coordinates, then $|v^B|=9$ and $|v^G|=27$, and if $v$ has exactly one nonzero coordinate, then $|v^G|=9$. Hence
\[
M=27=3|G:G'|,
\]
although $|G'|=9$, and $G$ has no regular orbit on $V$ (as also $|G|=81>|V|=64$). So without the hypothesis that $|V|$ is odd, the characterization of equality in Theorem~\ref{thmC}(c) fails for $p=3$, even for nilpotent groups. Note that $3=2^2-1$ is a Mersenne prime, so that $G$ is one of the exceptions in Theorem~\ref{Yang}.
\end{example}

\section{Outlook}\label{sec:outlook}

We conclude with some remarks on possible future research related to our results.

First, Theorems~\ref{thmA} and~\ref{thmC} leave open the characterization of the case of equality $|G:G'|=M/p$ if $|V|$ is even. For $p=3$, equality occurs for the nilpotent group $C_3\wr C_3$ with $|G'|=9$ (Example~\ref{ex:wr}). In general, a nilpotent group $G$ with $|G:G'|=M/p$ and $|G'|>p$ cannot have a regular orbit on $V$, since otherwise $M=|G|$ and $M/|G:G'|=|G'|$. Hence, by Lemma~\ref{nilreg}, such a group involves $C_r\wr C_r$ for some Mersenne prime $r\ge p$. For instance, if $r\ge7$ is a Mersenne prime, then $C_r\wr C_r$ acting monomially on $V(r,r+1)$ satisfies $M=r^r$ and $|G:G'|=r^2$, so that $M/|G:G'|=r^{r-2}>r$; this suggests that for $p\ge5$ there might be no such examples.

\begin{question}\label{q1}
Let $G$ be a finite nonabelian group of odd order, let $p$ be the smallest prime divisor of $|G|$, and let $V$ be a finite faithful completely reducible $G$-module of even order such that $|G:G'|=M/p$. Is $G$ nilpotent? If $p\ge5$, is it true that $|G'|=p$?
\end{question}

Second, in the situation of Theorem~\ref{thmC}(b), the quotient $M/|G:G'|$ is at least $13/3$ if $p=3$, where this bound is attained (Example~\ref{ex:13}), and it is larger than $2p$ if $p\ge5$. By Example~\ref{ex:SG}, the latter bound is close to best possible if $2p+1$ is a prime.

\begin{question}\label{q2}
Let $p\ge5$ be a prime. What is the infimum $c_p$ of the numbers $M/|G:G'|$, where $G$ runs through the non-nilpotent groups of odd order with smallest prime divisor $p$ and $V$ through the finite faithful completely reducible $G$-modules of odd order? By Theorem~\ref{thmC} and Example~\ref{ex:SG}, we have $c_p\ge2p$, and $c_p\le2p+1$ if $2p+1$ is a prime. Moreover, which pairs $(G,V)$ with $p=3$ satisfy $|G:G'|=\frac3{13}M$? Are they all of the form described in Example~\ref{ex:13}?
\end{question}

Third, in \cite[Theorem C]{CK24b} faithful $G$-modules $V$ of characteristic $s$ with $O_s(G)=1$ which are not necessarily completely reducible were also considered. If $|G|$ is odd and $s=2$, then the action of $G$ on $V$ is coprime, so that $V$ is completely reducible by Maschke's theorem and our results apply. For odd $s$, however, complete reducibility is used in an essential way in our arguments.

\begin{question}\label{q3}
Let $G$ be a finite nonabelian group of odd order, let $p$ be the smallest prime divisor of $|G|$, and let $V$ be a finite faithful $G$-module of odd characteristic $s$ with $O_s(G)=1$. Is it true that $|G:G'|\le M(G,V)/p$?
\end{question}

Finally, it would be interesting to know to what extent the results of this paper extend to groups of even order. Here the classification in \cite{CK24b} shows that $|G:G'|=M$ can occur for nonabelian groups, but one may ask for the best possible bound for nonabelian groups $G$ which do not have a direct factor isomorphic to one of the groups occurring in \cite[Theorems A and B]{CK24b}.
\section*{}
\begin{center}
    \textbf{Acknowledgement}
\end{center}
 Thie author was supported by a grant from the Simons Foundation (SFI-MPS-TSM-00014153 to Thomas M. Keller).

\begin{center}
    \textbf{AI statement}
\end{center}

This project was started a few years ago, and the author knew most of the results of this paper before any AI use. With the availability of AI, the author finished the paper using Claude by Anthropic. After using this tool, the author reviewed and edited the content as needed and takes full responsibility for the final mathematical and narrative integrity of the publication.


\end{document}